\pdfoutput=1
\documentclass[a4paper,10pt]{amsart}
\usepackage[utf8]{inputenc}
\usepackage[english]{babel}
\usepackage[autostyle=true]{csquotes}
\usepackage[T1]{fontenc}
\usepackage{lmodern}
\usepackage{microtype}

\usepackage{amsmath,amssymb,amsthm}
\usepackage{thmtools,thm-restate}
\usepackage{slashed}
\usepackage{mathrsfs}

\usepackage[pdfusetitle,hyperfootnotes=false]{hyperref}
\hypersetup{
  colorlinks=true,
  linkcolor={blue!60!black},
  citecolor={green!40!black},
  urlcolor={blue},
}

\allowdisplaybreaks[1] 
\relpenalty=9999
\binoppenalty=9999
\setlength{\emergencystretch}{1em}

\usepackage{tikz-cd}

\usepackage[citestyle=alphabetic,bibstyle=my-alphabetic,backend=biber,url=false,doi=true,isbn=false,maxbibnames=99,firstinits=true]{biblatex}

\AtEveryBibitem{\clearfield{month}}
\AtEveryBibitem{\clearfield{day}}
\AtEveryBibitem{\clearfield{series}}
\AtEveryBibitem{\clearfield{language}}
\AtEveryBibitem{
  \ifentrytype{book}
    {\clearfield{pages}
     \clearfield{volume}}
    {}
}
\bibliography{literature.bib}

\usepackage{paralist}

\declaretheorem[name=Theorem, numberwithin=section]{thm}
\declaretheorem[name=Theorem, numbered=no]{thm*}
\declaretheorem[name=Lemma,numberlike=thm]{lem}
\declaretheorem[name=Corollary,numberlike=thm]{cor}
\declaretheorem[name=Proposition,numberlike=thm]{prop}
\declaretheorem[name=Definition,numberlike=thm, style=definition]{defi}
\declaretheorem[name=Example, numberlike=thm, style=remark]{ex}
\declaretheorem[name=Remark, numberlike=thm, style=remark]{rem}

\usepackage[noabbrev]{cleveref} 
\crefname{figure}{Figure}{Figure}
\crefname{table}{Table}{Table}
\crefname{thm}{Theorem}{Theorems}
\crefname{lem}{Lemma}{Lemmas}
\crefname{defi}{Definition}{Definitions}
\crefname{cor}{Corollary}{Corollaries}
\crefname{prop}{Proposition}{Propositions}
\crefname{ex}{Example}{Examples}
\crefname{rem}{Remark}{Remarks}
\crefname{section}{Section}{Sections}
\crefname{chapter}{Chapter}{Chapters}
\crefname{appendix}{Appendix}{Appendices}
\crefdefaultlabelformat{\textup{#2#1#3}}
\creflabelformat{enumi}{\textup{(#2#1#3)}}

\numberwithin{equation}{section}

\usepackage[final]{fixme}
\fxsetup{layout={margin}}

\newcommand{\parensup}[1]{\textup{(}#1\textup{)}}

\newcommand{\tens}{\otimes}
\newcommand{\tensGr}{\widehat{\otimes}}

\newcommand{\Lp}{\mathrm{L}}

\newcommand{\field}[1]{\mathbb{#1}}
\newcommand{\C}{\field{C}}
\newcommand{\Z}{\field{Z}}
\newcommand{\N}{\field{N}}

\newcommand{\R}{\field{R}}
\newcommand{\Rext}{[-\infty,\infty]}
\newcommand{\Rgeq}{\R_{\geq 0}}
\newcommand{\Rleq}{\R_{\leq 0}}
\newcommand{\Rgr}{\R_{> 0}}

\newcommand{\Cstar}{\mathrm{C}^*}
\newcommand{\cont}{\mathcal{C}}
\newcommand{\contZ}{\cont_0}
\newcommand{\proj}{\operatorname{pr}}
\newcommand{\Mat}{\mathrm{M}}

\newcommand{\boundary}{\partial}
\newcommand{\bd}{\boundary}
\newcommand{\bdMV}{\bd_{\mathrm{MV}}}
\newcommand{\id}{\operatorname{id}}
\newcommand{\iu}{\mathrm{i}}
\newcommand{\eu}{\mathrm{e}}
\newcommand{\ev}{\operatorname{ev}}

\newcommand{\ran}{\operatorname{ran}}
\newcommand{\supp}{\operatorname{supp}}

\newcommand{\D}{\mathrm{d}}

\newcommand{\charFun}{\mathbf{1}}

\newcommand{\iso}{\cong}

\newcommand{\unitary}{\mathscr{U}}
\newcommand{\bndOps}{\mathbb{B}}
\newcommand{\cptOps}{\mathbb{K}}

\newcommand{\hilbert}{\mathcal{H}}
\newcommand{\hilbertGr}{\mathcal{H}}

\newcommand{\SO}{\mathrm{SO}}
\newcommand{\Spin}{\mathrm{Spin}}

\newcommand{\Hom}{\operatorname{Hom}}

\newcommand{\scalCurv}{\operatorname{scal}}

\newcommand{\Ind}{\operatorname{Ind}}
\newcommand{\IndRel}{\operatorname{Ind}_{\mathrm{rel}}}
\newcommand{\IndL}{\Ind_{\mathrm{L}}}
\newcommand{\IndLP}[1]{\Ind_{\mathrm{L,#1}}}

\newcommand{\IndLTildeP}[1]{\widetilde{\Ind}_{\mathrm{L, #1}}}
\newcommand{\secInv}{\rho}

\newcommand{\domain}{\operatorname{dom}}

\renewcommand{\leq}{\leqslant}
\renewcommand{\geq}{\geqslant}

\newcommand{\roeAlg}{\mathrm{C}^*}
\newcommand{\roeAlgLoc}{\mathrm{C}^*_L}

\newcommand{\roeAlgLocZ}{\mathrm{C}^*_{L,0}}
\newcommand{\roeAlgLocP}[1]{\mathrm{C}^*_{L,#1}}
\newcommand{\structureAlg}{\mathrm{D}^*}
\newcommand{\structureAlgLoc}{\mathrm{D}^*_L}

\newcommand{\structureGp}{\mathrm{S}}

\newcommand{\cliffAlg}{\mathrm{Cl}}
\newcommand{\cliffAlgDual}{\mathrm{Cl}^\ast}
\newcommand{\cliffAlgC}{\mathbb{C}\mathrm{l}}
\newcommand{\RReal}{\mathbf{C}}

\newcommand{\secProd}{\boxtimes}
\newcommand{\KTh}{\mathrm{K}}
\newcommand{\KOTh}{\KTh}
\newcommand{\KPh}{\ast}
\newcommand{\KThProd}{\times}
\newcommand{\KThGrProd}{\KThProd}
\newcommand{\KOThGrProd}{\KThGrProd}
\newcommand{\KOThProd}{\KThProd}
\newcommand{\KThGr}{\widehat{\KTh}}
\newcommand{\KOThGr}{\widehat{\KOTh}}
\newcommand{\ETh}{\mathrm{E}}

\newcommand{\KOSpec}{\underline{\KOTh}}

\newcommand{\smashProd}{\wedge}

\newcommand{\tensMap}{\mathrm{t}}

\newcommand{\principalBdl}{\mathrm{P}}
\newcommand{\spinorBdl}{\slashed{S}}

\newcommand{\contZGr}{\mathcal{S}}
\newcommand{\suspCstar}{\Sigma}
\newcommand{\pathsCstarMV}{\Omega}

\newcommand{\comult}{\triangle}

\newcommand{\tangentBdl}{\mathrm{T}}

\newcommand{\clnDiracOp}{\slashed{\mathfrak{D}}}
\newcommand{\diracOp}{\slashed{D}}
\newcommand{\clnSpinorBdl}{\slashed{\mathfrak{S}}}
\newcommand{\clnXmodule}{\mathfrak{H}}
\newcommand{\Xmodule}{\mathcal{H}}

\newcommand{\Ad}{\operatorname{Ad}}

\newcommand{\End}{\operatorname{End}}

\newcommand{\asympMorphism}{\dashrightarrow}

\newcommand{\nbh}{\mathcal{N}}
\newcommand{\idealRel}{\unlhd}

\newcommand{\Efree}{\mathrm{E}}
\newcommand{\Bfree}{\mathrm{B}}
\DeclareMathOperator*{\colim}{colim}
\newcommand{\CstarRed}{\Cstar_{\mathrm{r}}}
\DeclareMathOperator*{\propag}{prop}

\newcommand{\conjg}[1]{\bar{#1}}
\newcommand{\longconjg}[1]{\overline{#1}}

\newcommand{\myEmail}{\href{mailto:math@rzeidler.eu}{\nolinkurl{math@rzeidler.eu}}}

\title[PSC and product formulas for secondary index invariants]{Positive scalar curvature and product formulas for secondary index invariants}
\author{Rudolf Zeidler}
\subjclass[2010]{46L80 (Primary) 53C20 (Secondary)}
\thanks{The author is supported by the German Research Foundation (DFG) through the Research Training Group 1493 \enquote{Mathematical structures in modern quantum physics.}}
\address{Mathematisches Institut, Georg--August--Universität Göttingen, Bunsenstraße 3-5, 37073 Göttingen, Germany}
\email{\myEmail}
\date{}

\begin{document}

\begin{abstract}
We introduce partial secondary invariants associated to complete Riemannian metrics which have uniformly positive scalar curvature outside a prescribed subset on a spin manifold.
These can be used to distinguish such Riemannian metrics up to concordance relative to the prescribed subset.
We exhibit a general external product formula for partial secondary invariants, from which we deduce product formulas for the higher $\rho$-invariant of a metric with uniformly positive scalar curvature as well as for the higher relative index of two metrics with uniformly positive scalar curvature.

Our methods yield a new conceptual proof of the secondary partitioned manifold index theorem and a refined version of the delocalized APS-index theorem of Piazza--Schick for the spinor Dirac operator in all dimensions.
We establish a partitioned manifold index theorem for the higher relative index.
We also show that secondary invariants are stable with respect to direct products with aspherical manifolds that have fundamental groups of finite asymptotic dimension.
Moreover, we construct examples of complete metrics with uniformly positive scalar curvature on non-compact spin manifolds which can be distinguished up to concordance relative to subsets which are coarsely negligible in a certain sense.

A technical novelty in this paper is that we use Yu's localization algebras in combination with the description of $\KTh$-theory for graded $\Cstar$-algebras due to Trout.
This formalism allows direct definitions of all the invariants we consider in terms of the functional calculus of the Dirac operator and enables us to give concise proofs of the product formulas.
\end{abstract}

\maketitle

\section{Introduction}
Let $X$ be an $n$-dimensional complete spin manifold equipped with a Riemannian metric $g$ of \emph{uniformly positive scalar curvature} (henceforth abbreviated by \enquote{\emph{upsc}}).
Moreover, let $\Gamma$ be a countable discrete group that acts on $X$ freely and properly by spin structure preserving isometries.
The (equivariant) \emph{coarse assembly map} of Higson and Roe $\mu \colon \KTh^\Gamma_\KPh(X) \to \KTh_\KPh(\roeAlg(X)^\Gamma)$, from (equivariant locally finite) $\KTh$-homology of $X$ into the $\KTh$-theory of the equivariant Roe algebra, fits into a long exact sequence
\begin{equation*}
    \dotsm \to \KTh_{n+1}(\roeAlg(X)^\Gamma) \overset{\bd}{\to} \structureGp_n^\Gamma(X) \to \KTh_n^\Gamma(X) \overset{\mu}{\to} \KTh_n(\roeAlg(X)^\Gamma) \to \dotsm,
\end{equation*}
where $\structureGp_n^\Gamma(X)$ is the \emph{analytic structure group} (which can also be realized as the $\KTh$-theory of a certain $\Cstar$-algebra).
The spinor Dirac operator $\diracOp$ on $X$ can be used to construct the $\KTh$-homological fundamental class $[\diracOp] \in \KTh_n^\Gamma(X)$, and $\Ind^\Gamma(\diracOp) = \mu\left( [\diracOp] \right) \in \KTh_n(\roeAlg(X)^\Gamma)$ is the (equivariant) \emph{coarse index} of $\diracOp$.
It is a well-known consequence of the Schrödinger--Lichnerowicz formula $\diracOp^2 = \nabla^\ast \nabla + \frac{\scalCurv_g}{4}$ that upsc implies vanishing of the index.
This statement can be refined by constructing a secondary invariant $\secInv^\Gamma(g) \in \structureGp_n^\Gamma(X)$, called the \emph{higher $\rho$-invariant} of the metric $g$, which is a lift of $[\diracOp] \in \KTh_n^\Gamma(X)$ to the structure group.
The $\rho$-invariant can be viewed as a $\KTh$-theoretic embodiment of the geometric reason for the vanishing of the index given by upsc.
The $\rho$-invariant can be used to distinguish psc metrics up to bordism, see~\cite[Corollary 1.16]{PS14Rho}.
Moreover, if $g_0$ and $g_1$ are two $\Gamma$-invariant metrics of upsc which are in the same uniform equivalence class,\footnote{This condition is automatically satisfied if the action is cocompact.} then there is a \emph{relative index} $\IndRel^\Gamma(g_0, g_1) \in \KTh_{n+1}(\roeAlg(X)^\Gamma)$ such that $\bd (\IndRel^\Gamma(g_0, g_1)) = \rho^\Gamma(g_0) - \rho^\Gamma(g_1)$.
The relative index is zero if the two positive scalar curvature metrics are concordant.

These secondary invariants have been the focus of intensive study in the recent past, see for instance~\cite{higson-roe:KhomologyAssemblyAndRigidityTheoremsForRelativeEtaInvariants,siegel:PhDthesis,XY14Relative,PS14Rho,weinberger-yu:finitePartOfOperatorKtheoryForGroupsFinitelyEmbeddable,XY14Positive,xie-yu:HigherRhoInvariantsAndTheModuliSpaceOfPSC}.
However, in standard applications, one takes $X := \widetilde{M}$ to be the universal covering of a closed spin manifold $M$ and $\Gamma = \pi_1(M)$.
One thereby obtains higher secondary invariants associated to psc metrics on $M$ by lifting them to $X$.
For instance, Weinberger--Yu~\cite{weinberger-yu:finitePartOfOperatorKtheoryForGroupsFinitelyEmbeddable} and Xie--Yu~\cite{xie-yu:HigherRhoInvariantsAndTheModuliSpaceOfPSC} apply this approach to distinguish psc metrics which are constructed using torsion elements of different orders in $\pi_1(M)$.
The $\Gamma$-action on $X = \widetilde{M}$ is cocompact and thus there is a canonical isomorphism $\KTh_\KPh(\roeAlg(X)^\Gamma) = \KTh_\KPh(\CstarRed \Gamma)$.
In contrast, we wish to emphasize that secondary index theory for positive scalar curvature also applies in non-(co)compact situations and indeed leads to interesting applications there.
Thus we always work in the setup of coarse index theory and define all the invariants for the general (possibly non-cocompact) case.

One central motivation for this paper is to give conceptual proofs in all dimensions of the secondary partitioned manifold index theorem and the \enquote{delocalized APS-index theorem} of Piazza--Schick~\cite{PS14Rho}.
Originally these results have been established only in the even-dimensional case but Xie--Yu~\cite{XY14Positive} reproved the delocalized APS-index theorem in all dimensions.
The main ingredient will be a variant of the product formula,
\begin{equation}
   \rho^{\Gamma_1}(g_1) \secProd [\diracOp_{X_2}] = \rho^{\Gamma_1 \times \Gamma_2}(g_1 \oplus g_2), \label{eq:introProductFormula}
\end{equation}
 where $X_i$ is a spin manifold with Riemannian metric $g_i$ endowed with a free and proper isometric $\Gamma_i$-action, $i= 1,2$, such that both $g_1$ on $X_1$ as well as $g_1 \oplus g_2$ on $X_1 \times X_2$ have upsc.
 Here \enquote{$\secProd$} denotes a suitable external product,
\begin{equation*}
   \structureGp_n^{\Gamma_1}(X_1) \tens \KTh_m^{\Gamma_2}(X_2) \overset{\secProd}{\to} \structureGp^{\Gamma_1 \times \Gamma_2}_{n+m}(X_1 \times X_2).
\end{equation*}
Another ingredient is the compatibility of the above product with Mayer--Vietoris boundary maps (see~\cref{sec:boundary}).

In addition, we establish a product formula for the relative index,
\begin{equation}
  \IndRel^{\Gamma_1 \times \Gamma_2}(g_{1,0} \oplus g_2, g_{1,1} \oplus g_2) = \IndRel^{\Gamma_1}(g_{1,0}, g_{1,1}) \secProd \Ind^{\Gamma_2}(\diracOp_2), \label{eq:introRelProductFormula}
\end{equation}
where $g_{1,i}$ for $i=0,1$ are two $\Gamma$-invariant metrics of uniform psc on $X_1$ in the same uniform equivalence class such that $g_{1,i} \oplus g_2$ have upsc on $X_1 \times X_2$.

A product formula as in~\labelcref{eq:introProductFormula} was established in the thesis of Siegel~\cite{siegel:PhDthesis}, where a construction of the structure group in terms of a customized notion of Kasparov cycles is used.
However, in Siegel's construction the compatibility between the exterior product and the Mayer--Vietoris boundary map appears to be not straightforward.
Recently, this approach has also been studied by Zenobi~\cite{zenobi:mappingTheSurgeryExactSequenceForTopologicalManifolds} with a focus on the signature operator and secondary invariants associated to homotopy equivalences.
Moreover, the product formula can be implemented using the geometric picture of the structure group due to Deeley--Goffeng~\cite{DG13GeometricI}.
Another discussion of \labelcref{eq:introProductFormula} can be found implicitly in the work of Xie--Yu~\cite[838--839]{XY14Positive} using Yu's localization algebras.
In the present paper, we introduce a variant of the latter approach which entails technical simplifications in the proof of the product formula.

The localization algebra $\roeAlgLoc(X)$ has been introduced by Yu to provide an alternative model for $\KTh$-homology and study the coarse assembly map~\cite{yu:localizationAlgebrasAndCoarseBaumConnes}.
The $\Gamma$-equivariant localization algebra $\roeAlgLoc(X)^\Gamma$ is the $\Cstar$-algebra generated by uniformly continuous $1$-parameter families $(L_t)_{t \in [1,\infty)}$ of operators $L_t$, each of which lies in the $\Gamma$-equivariant Roe algebra $\roeAlg(X)^\Gamma$, such that the propagation of $L_t$ tends to zero as $t \to \infty$.
There is a surjective evaluation homomorphism $\ev_1 \colon \roeAlgLoc(X)^\Gamma \to \roeAlg(X)^\Gamma$.
The main feature is that there exists an isomorphism $\IndL^\Gamma \colon \KOTh_\KPh^\Gamma(X) \overset{\iso}{\to} \KOTh_\KPh(\roeAlgLoc(X)^\Gamma)$, called the \enquote{local index map}, such that the $\Gamma$-equivariant coarse assembly map factors as $\mu = (\ev_1)_\KPh \circ \IndL^\Gamma$.
We will refer to the image of $[\diracOp]$ under the local index map as the \emph{local index class $\IndL^\Gamma(\diracOp)$} of the Dirac operator.
 Moreover, there is an exact sequence
\begin{equation*}
    0 \to \roeAlgLocZ(X)^\Gamma \to \roeAlgLoc(X)^\Gamma \to \roeAlg(X)^\Gamma \to 0,
\end{equation*}
where $\roeAlgLocZ(X)^\Gamma$ is the kernel of $\ev_1$, and the $\KTh$-theory group $\KTh_\KPh(\roeAlgLocZ(X)^\Gamma)$ is a suitable model for the structure group $\structureGp^\Gamma_\KPh(X)$.
The long exact sequence in $\KTh$-theory associated to this short exact sequence yields the desired (equivariant) Higson--Roe sequence after identifying $\KTh$-homology with the $\KTh$-theory of $\roeAlgLoc(X)^\Gamma$ via the local index map.

The main technical novelty of this paper is that we combine Yu's localization algebras with the description of $\KTh$-theory for graded $\Cstar$-algebras due to Trout~\cite{trout:gradedKTheoryEllipticOperators}.
That is, we view the $\KTh_0$-group of a $\Z_2$-graded $\Cstar$-algebra $A$ as the set of homotopy classes of graded $\ast$-homomorphisms $\contZGr \to A \tensGr \cptOps$, where $\contZGr$ is equal to $\contZ(\R)$ but graded into even and odd functions.
This formalism allows almost tautological definitions of the local index class as well as of the secondary invariant in terms of the functional calculus of the Dirac operator (see~\cref{subsec:localIndexClasses,subsec:partialIndex,subsec:rhoInvariant,subsec:twoPSCrelIndex}).
In order to treat all dimensions at once, we consider a $\cliffAlg_n$-linear variant of the localization algebras (see~\cref{sec:localizationAlgebras}) and consistently work with the $\cliffAlg_n$-linear (or $n$-multigraded) Dirac operator $\clnDiracOp$. 
Thus, our approach generalizes to the setting of real $\KTh$-theory.
Using this setup we give a concise and self-contained proof of the product formulas (see~\cref{subsec:productFormulas}).
Here we greatly benefit from the fact that this description of $\KTh$-theory is well adapted to products in the context of index theory.

To compare our construction to previous approaches, in \cref{sec:comparison} we describe our secondary invariants explicitly as ordinary complex $\KTh$-theory classes in terms of projections and unitaries. 
As a consequence, we observe that our construction of the local index classes and $\rho$-invariants is essentially equivalent to Xie--Yu's~\cite{XY14Positive}.

Note that the relative index $\IndRel^\Gamma(g_0, g_1)$ of two upsc metrics on $X$ can be defined as an index associated to a metric on $\R \times X$ which interpolates between $g_0$ and $g_1$ (but does not necessarily have upsc along the way), see~\cref{subsec:twoPSCrelIndex}.
Therefore, to deal with both~\labelcref{eq:introProductFormula,eq:introRelProductFormula} in a unified way, we are led to consider metrics which have upsc outside a prescribed $\Gamma$-invariant subset $Z \subset X$.
In this situation Roe has shown that the coarse index can be \enquote{localized} to the subset $Z$,~\cite{roe:indexTheoryCoarseGeometryTopologyOfManifolds,Roe15:Partial}.
In \cref{subsec:partialIndex}, we observe that this implies that the local index class lifts to a particular element $\IndLP{Z}^\Gamma(\clnDiracOp^{g}) \in \KTh_n(\roeAlgLocP{Z}(X)^\Gamma)$, where $\roeAlgLocP{Z}(X)^\Gamma$ is the closure of all elements $L \in \roeAlgLoc(X)^\Gamma$ such that the operator $\ev_1(L)$ is supported in an $R$-neighborhood of the subset $Z$ for some $R \geq 0$.
We view the class $\IndLP{Z}^\Gamma(\clnDiracOp^{g})$ as a \emph{partial secondary invariant} associated to the fact that the underlying metric has partially upsc.

Although our initial motivation was to deal with the relative index, we wish to point out that these partial secondary invariants are of intrinsic interest in the study of certain positive scalar curvature phenomena on non-compact manifolds.
Recall that the \enquote{localized coarse index} of Roe~\cite{Roe15:Partial} gives obstructions against partial uniform positive scalar curvature.
Our partial secondary invariants can be viewed as a secondary analogue of this, that is, they can be used to obtain obstructions against \enquote{partial concordance} of upsc metrics.
Indeed, the class $\IndLP{Z}^\Gamma(\clnDiracOp^{g})$ depends on $g$ only up to \enquote{concordance relative to $Z$} (see~\cref{defi:concordanceRelZ,thm:concordanceInvariance}).

\subsection*{Main results}
A significant part of this paper is devoted to developing a theory of partial secondary invariants.
As a central principle, we have the following external product formula.
\begin{restatable*}{thm}{thmProductFormula} \label{thm:productFormula}
 Let $X = X_1 \times X_2$ be a product of two complete spin manifolds and suppose that $X_i$ is endowed with a free and proper $\Gamma_i$-action \parensup{$i=1,2$}.
 Suppose that the metric $g_1$ on $X_1$ has upsc outside a closed $\Gamma_1$-invariant subset $Z_1 \subset X_1$, and $g = g_1 \oplus g_2$ has upsc outside $Z = Z_1 \times X_2 \subset X_1 \times X_2$.
 Then,
 \begin{equation}
  \IndLP{Z}^{\Gamma_1 \times \Gamma_2}(\clnDiracOp^{g}_X) =  \IndLP{Z_1}^{\Gamma_1}(\clnDiracOp_{X_1}^{g_1}) \secProd \IndL^{\Gamma_2}(\clnDiracOp_{X_2}). \label{eq:generalProductFormula}
 \end{equation}
\end{restatable*}
Here we use an external product,
\begin{equation*}
  \KTh_{n_1}\left(\roeAlgLocP{Z_1}(X_1)^{\Gamma_1} \right) \tens \KTh_{n_2}\left(\roeAlgLoc(X_2)^{\Gamma_2}\right) \overset{\secProd}{\to} \KTh_{n_1 + n_2}\left(\roeAlgLocP{Z_1 \times X_2}(X_1 \times X_2)^{\Gamma_1 \times \Gamma_2} \right),
\end{equation*}
that we construct in \cref{subsec:externalProductLocAlgebras}.

From this theorem we deduce \labelcref{eq:introProductFormula,eq:introRelProductFormula} as well as the classical product formula for the fundamental classes (that is, in our setup, local index classes), see~\cref{cor:productFormula,cor:productFormulaRelIndex}.
For example, setting $Z = \emptyset$ and taking $\secInv^\Gamma(g)$ to be $\IndLP{\emptyset}^\Gamma(\clnDiracOp^g)$, we obtain precisely \labelcref{eq:introProductFormula}.

It is crucial for applications that the external product is compatible with (Mayer--Vietoris) boundary maps.
In our construction this is the case because the external product we use is induced by the external product in $\KTh$-theory, where compatibility with boundary maps can be checked abstractly.
Together with the product formula we obtain a partitioned manifold index theorem for partial secondary invariants  as follows:

 Let $W$ be a spin manifold endowed with a free and proper $\Gamma$-action and a $\Gamma$-invariant Riemannian metric $h$.
 Suppose that $X \subset W$ is a closed submanifold of codimension one such that $W \setminus X$ has two connected components, each of which is $\Gamma$-invariant individually, and the metric $h$ has a product structure on a tubular neighborhood of $X$.
 In this situation, we say that $(W,h)$ is \emph{partitioned by $(X,g)$}, where $g$ denotes the restriction of $h$ to $X$.
 Furthermore, if $Z \subseteq W$ is a closed $\Gamma$-invariant subset that satisfies a certain technical condition (see~\cref{defi:admissible}), then we have the following result:
 
\begin{restatable*}{thm}{thmPartitionedManifold} \label{thm:partitionedManifold}
 Let $(W,h)$ be a complete Riemannian spin manifold endowed with a free and proper $\Gamma$-action and suppose that it is partitioned by $(X,g)$.
 Let $Z \subseteq W$ be a closed $\Gamma$-invariant subset that is admissible with respect to $W_+$ and suppose that $h$ has upsc outside $Z$.
 Then the Mayer--Vietoris boundary map
 \begin{equation*}
  \bdMV \colon \KTh_{n+1}(\roeAlgLocP{Z}(W)^\Gamma) \to \KTh_n(\roeAlgLocP{Z \cap X}(X)^\Gamma)
 \end{equation*}
  associated to the cover $W = W_+ \cup W_-$ satisfies
  \begin{equation*}
   \bdMV\left( \IndLP{Z}^\Gamma(\clnDiracOp_W^h) \right) = \IndLP{Z \cap X}^\Gamma(\clnDiracOp_X^g).
  \end{equation*}
\end{restatable*}

Specializing to the case where $Z = \emptyset$ and $X$ is $\Gamma$-cocompact, we obtain a new proof of the secondary partitioned manifold index theorem of Piazza--Schick~\cite[Theorem 1.22]{PS14Rho} for all dimensions.
We also deduce a partitioned manifold index theorem for the relative index of two upsc metrics (see~\cref{thm:relativeIndexPartitionedManifold}).

Moreover, it turns out that the delocalized APS-index theorem of Piazza--Schick is itself a consequence of the partitioned manifold index theorem for partial secondary invariants.
Indeed, consider a spin manifold $W$ with boundary $\bd W = X$ endowed with a complete Riemannian metric $h$ and a proper and free isometric action of $\Gamma$.
  Suppose that $h$ is a product metric~$g \oplus \D{t}^2$ on a collar neighborhood of $X$.
  We also assume that the inclusion $X \hookrightarrow W$ is a coarse equivalence.\footnote{This is automatically satisfied for instance if $W$ is a $\Gamma$-covering of a null-bordism of some closed manifold.}
  This implies that the index $\Ind^\Gamma(\clnDiracOp_X)$ vanishes (this is a variant of coarse bordism invariance, see~\cite{W12Bordism}).
  More precisely, we construct a \enquote{$\rho$-invariant of the null-bordism $W$} which we denote by $\rho^\Gamma(W) \in \KTh_{n}(\roeAlgLocZ(X)^\Gamma)$ and which lifts the local index class, see~\cref{defi:rhoOfBordism}.
  In addition, if the metric $g$ on $X$ has upsc, then we can also construct a certain relative index $\Ind_W^\Gamma(\clnDiracOp_W) \in \KTh_\KPh(\roeAlg(X)^\Gamma) \iso \KTh_\KPh(\roeAlg(W)^\Gamma)$, compare~\cite{PS14Rho}.
  If we attach a cylindrical end $X \times [0,\infty)$ to $W$, we may apply the partitioned manifold index theorem for partial secondary invariants to the resulting manifold.
  Together with some additional formal arguments this yields:
  
\begin{restatable*}{thm}{thmRefinedDeloalizedAPS} \label{thm:refinedDeloalizedAPS}
  In the above setup, suppose that the metric $g_X$ has upsc on $X = \bd W$.
  Then
  \begin{equation*}
    \bd(\Ind_W^\Gamma(\clnDiracOp_W)) = \rho^\Gamma(g_X) - \rho^\Gamma(W) \in \KTh_n(\roeAlgLocZ(X)^\Gamma),
  \end{equation*}
  where $\bd \colon \KTh_{\KPh+1}(\roeAlg(X)^\Gamma) \to \KTh_\KPh(\roeAlgLocZ(X)^\Gamma)$ is the boundary map in the Higson--Roe exact sequence of $X$.
\end{restatable*}
This theorem is a refined version of \cite[Theorem 1.14]{PS14Rho} (respectively \cite[Theorem 4.1]{XY14Positive}) because we have an equality in $\KTh_\KPh(\roeAlgLocZ(X)^\Gamma)$ instead of merely in $\KTh_\KPh(\roeAlgLocZ(W)^\Gamma)$.
In fact, the correction term $\rho^\Gamma(W)$ vanishes after pushing it forward to $\KTh_\KPh(\roeAlgLocZ(W)^\Gamma)$ and thus it recovers the original result of Piazza--Schick.
  
We now turn to geometric consequences of our methods.
The main application of (partial) secondary invariants for positive scalar curvature is to distinguish upsc metrics up to bordism or concordance.
Product formulas allow to extend these applications simply by taking direct products with certain manifolds.
Before stating the first corollary, we recall a geometric concept due to Gromov.

A complete Riemannian manifold $Y$ is called \emph{hypereuclidean} if it admits a proper Lipschitz map $Y \to \R^q$ of degree $1$ into some Euclidean space $\R^q$ (if this is the case, then $q = \dim Y$).
Furthermore, we say that $Y$ is \emph{stably hypereuclidean} if $Y \times \R^k$ is hypereuclidean for some $k \geq 0$.

\begin{restatable*}{cor}{thmDistinguishStabilizeByHypereucl} \label{thm:distinguishStabilizeByHypereucl}
  Let $X$ be a proper metric space endowed with a proper and free $\Gamma$-action, $Z \subset X$ a closed $\Gamma$-invariant subset.
  Suppose that $Y$ is a $q$-dimensional complete spin manifold that is stably hypereuclidean and endowed with a proper and free $\Lambda$-action.
  Then the map
  \begin{equation*}
  \KTh_\KPh(\roeAlgLocP{Z}(X)^\Gamma) \to \KTh_{\KPh+q}(\roeAlgLocP{Z \times Y}(X \times Y)^{\Gamma \times \Lambda}), \quad  x \mapsto x \secProd \IndL^\Lambda(\clnDiracOp_{Y}),
  \end{equation*}
  is split-injective.
  
  In particular, if we suppose that $X$ is a spin manifold endowed with two complete $\Gamma$-invariant metrics $g_0$ and $g_1$ which are in the same uniform equivalence class and have upsc outside $Z$ with $\IndLP{Z}^\Gamma(\clnDiracOp^{g_0}_X) \neq \IndLP{Z}^\Gamma(\clnDiracOp^{g_1}_X)$, then also $\IndLP{Z \times Y}^{\Gamma \times \Lambda}(\clnDiracOp^{g_0 \oplus g_Y}_{X \times Y}) \neq \IndLP{Z \times Y}^{\Gamma \times \Lambda}(\clnDiracOp^{g_1 \oplus g_Y}_{X \times Y})$ provided that $g_i \oplus g_Y$ for $i=0,1$ have upsc outside $Z \times Y$.
\end{restatable*}
In particular, with $Z = \emptyset$, this yields a stability result for $\rho$-invariants concerning products with (stably) hypereuclidean manifolds.

If $M$ is a closed $n$-dimensional spin manifold together with a reference map $u \colon M \to \Bfree \Gamma$ (for instance, take $\Gamma = \pi_1(M)$ and $u$ to be the map classifying the universal covering), then one can define a higher $\rho$-invariant $\rho^u(g) \in \structureGp_n^\Gamma$ for each psc metric $g$ on $M$, where $\structureGp_\KPh^\Gamma$ is the universal structure group associated to $\Gamma$, see~\cite{PS14Rho} and \cref{defi:univStructGp}.
By a result of Dranishnikov~\cite[Theorem 3.5]{dranishnikov:onHypereuclideanManifolds}, the universal covering of an aspherical manifold is stably hypereuclidean if the fundamental group has finite asymptotic dimension.
Combining this with our corollary above, we obtain the following stability result for higher $\rho$-invariants on closed manifolds:
\begin{restatable*}{cor}{corDistinguishAfterProductWithAspherical} \label{cor:distinguishAfterProductWithAspherical}
  Let $M_i$ be closed spin, $u_i \colon M_i \to \Bfree \Gamma$ and $g_i$ a psc metric on $M_i$, $i=0,1$, such that $\rho^{u_0}(g_0) \neq \rho^{u_1}(g_1)$.
  Let $N$ be a closed aspherical spin manifold such that $\Lambda = \pi_1(N)$ has finite asymptotic dimension.
  Let $g_N$ be a Riemannian metric such that $g_i \oplus g_N$ has psc on $M_i \times N$ for $i=0,1$.
    Then $\rho^{u_0 \times \id_{N}}(g_0 \oplus g_N) \neq \rho^{u_1 \times \id_N}(g_1 \oplus g_N)$.
\end{restatable*}
 In particular, this implies that $(M_0 \times N, u_0 \times \id_N, g_0 \oplus g_N)$ and $(M_1 \times N, u_1 \times \id_N, g_1 \oplus g_N)$ are not bordant as psc manifolds with reference map (see \cref{sec:higherInvariants} for a more detailed discussion).

 However, \cref{cor:distinguishAfterProductWithAspherical} would be false if we allowed $N$ to admit psc (this is excluded in a strong sense since $N$ is assumed to be aspherical and finite asymptotic dimension implies the analytic Novikov conjecture for $\Lambda$).
 Indeed, it is a simple geometric fact that any two psc metrics $h_i$ on $M \times N$ that can be written as a product $h_i = g_{M,i} \oplus g_{N,i}$ are concordant if $M$ and $N$ both admit psc individually.
 Thus, if $M$ and $N$ both admit psc and we have $\rho^{v}(h_0) \neq \rho^{v}(h_1)$ on $M \times N$ for some $v \colon M \times N \to \Bfree \Gamma$, then $h_0$ or $h_1$ is not concordant to a metric of product form (see~\cref{prop:notAProductMetric}).
 
 Our methods also allow to produce examples of complete upsc metrics on non-compact manifolds which are distinguished by certain partial secondary invariants.
 As input for the constructions we will use psc metrics on closed spin manifolds which are distinguished by the higher $\rho$-invariant (for instance from \cite{weinberger-yu:finitePartOfOperatorKtheoryForGroupsFinitelyEmbeddable,xie-yu:HigherRhoInvariantsAndTheModuliSpaceOfPSC}).
 For instance, we have a corollary of our partitioned manifold index theorem which can be viewed as a secondary analogue of \cite[Proposition 3.2]{Roe15:Partial}.
 \begin{restatable*}{cor}{thmCorOfPartitionedMfd}\label{thm:corOfPartitionedMfd}
  Let $M$ be a closed spin manifold together with a map $u \colon M \to \Bfree \Gamma$ and two psc metrics $g_0$ and $g_1$ such that $\rho^u(g_0) \neq \rho^u(g_1)$.
  Let $W$ be a spin manifold with two complete upsc metrics $h_0$, $h_1$ in the same uniform equivalence class such that $(W,h_i)$ is partitioned by $(M,g_i)$, $i=0,1$.
  Suppose furthermore that $u$ extends to a map $W \to \Bfree \Gamma$.
  Then $h_0$ and $h_1$ are not concordant relative to $W_-$ \parensup{or $W_+$}, where $W_\pm$ are the connected components of $W \setminus M$.
\end{restatable*}
 We can draw similar consequences from \cref{thm:distinguishStabilizeByHypereucl}.
 We say that a subset $Z \subseteq X$ is \emph{coarsely negligible in $X$} if the inclusion map coarsely factors through a flasque space (see~\cref{defi:coarseNegligible}).
 Examples of coarsely negligible subsets include compact subsets of complete Riemannian manifolds and half spaces in Euclidean spaces.
 \begin{restatable*}{thm}{thmProductCoarselyNegligible} \label{thm:productCoarselyNegligible}
    Let $M$ be a closed spin manifold together with a map $u \colon M \to \Bfree \Gamma$ and two psc metrics $g_0$ and $g_1$.
    Moreover, let $Y$ be a spin manifold with a complete Riemannian metric $g_Y$ and $Z \subseteq Y$ some subset.
    Suppose that,
    \begin{enumerate}[(i)]
    \item $\rho^u(g_0) \neq \rho^u(g_1)$,
     \item $g_i \oplus g_Y$ have upsc on $M \times Y$ for $i=0,1$,
     \item $(Y, g_Y)$ is stably hypereuclidean,
     \item $Z$ is coarsely negligible in $Y$.
    \end{enumerate}    
    Then the metrics $g_0 \oplus g_X$ and $g_1 \oplus g_X$ are not concordant on $M \times Y$ relative to~$M \times Z$.
\end{restatable*}
A simple example to which this theorem can be always applied is $Y = \R^q$ and $Z = [0,\infty) \times \R^{q-1}$.

\subsection*{Acknowledgements}
The author wishes to express his gratitude to his advisor Thomas Schick for sharing his knowledge and many fruitful discussions.
He also would like to thank Ralf Meyer and the anonymous referees for useful comments on the manuscript.


\section{Prerequisites} \label{sec:gradedKtheory}
\subsection{Graded \texorpdfstring{$\Cstar$}{C*}-algebras and \texorpdfstring{$\KTh$}{K}-Theory }
We use the approach to $\KTh$-theory for graded $\Cstar$-algebras due to Trout~\cite{trout:gradedKTheoryEllipticOperators} but we will mostly follow the exposition in the lecture notes~\cite{higson-guentner:GroupCstarAlgebrasAndKTheory}.

We work with \emph{Real} $\Cstar$-algebras.
The reader may \enquote{complexify} this section simply by ignoring the Real structure.
A \emph{Real} $\Cstar$-algebra is a complex $\Cstar$-algebra $A$ together with an involutive conjugate-linear $\ast$-automorphism $A \to A$, $a \mapsto \conjg{a}$.
We require $*$-homomorphisms $\varphi \colon A \to B$ between Real $\Cstar$-algebras to preserve the Real structure, that is, $\varphi(\conjg{a}) = \longconjg{\varphi(a)}$ for all $a \in A$.

A \emph{grading} on a (Real) $\Cstar$-algebra $A$ is a Real $*$-automorphism $\alpha \colon A \to A$ such that $\alpha^2 = \id$.
A $\Cstar$-algebra together with a grading is called a $\emph{graded $\Cstar$-algebra}$.
Alternatively, a grading may be viewed as a direct sum decomposition $A = A^{(0)} \oplus A^{(1)}$ into selfadjoint subspaces such that $A^{(i)} A^{(j)} \subseteq A^{(i+j)}$, where $A^{(i)}$ is the $(-1)^i$-eigenspace of $\alpha$, $i \in \Z_2$.
All $*$-homomorphisms $\varphi \colon A \to B$ between graded $\Cstar$-algebras will be assumed to preserve the grading in the sense that $\varphi \circ \alpha = \alpha \circ \varphi$.
Note that any $\Cstar$-algebra can be \emph{trivially graded} by setting $\alpha = \id$.

The Real $\Cstar$-algebra of continuous functions on the real line which vanish at infinity admits a grading defined by the reflection map $f \mapsto (x \mapsto f(-x))$.
We will denote this graded Real $\Cstar$-algebra by $\contZGr$.

Let $\hilbert = \hilbert^{(0)} \oplus \hilbert^{(1)}$ be a fixed graded Real Hilbert space, where $\hilbert^{(0)} = \hilbert^{(1)}$  is countably infinite-dimensional. 
Let $\cptOps$ denote the Real $\Cstar$-algebra of compact operators on $\hilbert$, graded by the decomposition into diagonal and off-diagonal matrices.
Such a grading is known as a \emph{standard even grading}.

Given two graded $\Cstar$-algebras $A$ and $B$, we denote their maximal graded tensor product by $A \tensGr B$.
We will always use \emph{maximal} tensor products unless specified otherwise.

A central feature of this paper is the use of Clifford algebras.
See~\cite{atiyah-bott-shapiro:cliffordModules,lawson-michelsohn:spinGeometry} for general references.
We use the following notation.
The \emph{Clifford algebra} $\cliffAlg_{n,m}$ is the Real $\Cstar$-algebra generated by real, odd generators $\{ e_1, \dotsc, e_n, \varepsilon_1, \dotsc, \varepsilon_m \}$ subject to the relations $e_i e_j + e_j e_i = -2 \delta_{ij}, \varepsilon_k \varepsilon_l + \varepsilon_l \varepsilon_k = +2 \delta_{kl}, e_i \varepsilon_k + \varepsilon_k e_i = 0, e_i^* = -e_i, \varepsilon_k^* = \varepsilon_k$.
As shorthands we denote $\cliffAlg_{n,0}$ by $\cliffAlg_n$ and $\cliffAlg_{0,n}$ by $\cliffAlgDual_n$.
There is a canonical isomorphism $\cliffAlg_{n,m} \tensGr \cliffAlg_{n^\prime, m^\prime} = \cliffAlg_{n+n^\prime, m+m^\prime}$.
In cases where we do not use the Real structure, we will denote the Clifford algebras by $\cliffAlgC_n$.
Moreover, $\cliffAlg_{n,n}$ is isomorphic to the matrix algebra $\Mat_{2^n}(\RReal)$.
Here \enquote{$\RReal$} denotes the Real algebra $\C$ endowed with the standard complex conjugation.
For $n > 0$, the algebra $\cliffAlg_{n,n}$ is endowed with a standard even grading using an identification $\cliffAlg_{n,n} \iso \Mat_{2^{n}}(\RReal) = \Mat_2(\Mat_{2^{n-1}}(\RReal))$.
In particular, we have an isomorphism $\cliffAlg_{n,n} \tensGr \cptOps \iso \cptOps$ for all $n \geq 0$.

Let $A$, $B$ be graded Real $\Cstar$-algebras, then we denote by $[A,B]$ the set of homotopy classes of $*$-homomorphisms $A \to B$ (with respect to homotopies preserving the given Real structure and grading).
In other words, $[A,B] = \pi_0(\Hom(A,B))$, where $\Hom(A,B)$ denotes the space of $\ast$-homomorphisms $A \to B$ endowed with the point-norm topology.
The homotopy class of a $*$-homomorphism $\varphi \colon A \to B$ will be denoted by $[\varphi]$. 

\begin{defi}
 Let $A$ be a graded Real $\Cstar$-algebra. 
 For $n \geq 0$, we define the group 
  \begin{equation*}
  \KOThGr_n(A) := \pi_n(\KOSpec(A)),
  \end{equation*} 
  where $\KOSpec(A) := \Hom(\contZGr, A \tensGr \cptOps)$ with the zero map as base-point.
\end{defi}

 If $A$ is a trivially graded algebra, we use the symbol $\KOTh_*(A)$ synonymously with $\KOThGr_*(A)$.
 This is justified because for trivially graded algebras this agrees with ordinary $\KOTh$-theory of $\Cstar$-algebras defined in terms of projections.
 Explicit isomorphisms in the complex case are explained in \cref{subsec:triviallyGraded}.
 
 One can verify that the $n$-fold loop space of $\KOSpec(A)$ is canonically homeomorphic to $\KOSpec(\suspCstar^n A)$.
 Here $\suspCstar^n A$ denotes the $n$-fold \emph{suspension} of $A$, that is, $\suspCstar^n A = \contZ(\R^n) \tensGr A$, where $\contZ(\R^n)$ is endowed with the trivial grading.
 In particular, we have $\KOThGr_n(A) = \pi_0(\KOSpec(\suspCstar^n A)) = \KOThGr_0(\suspCstar^n A) = \left[\contZGr, \suspCstar^n A \tensGr \cptOps \right]$.

 The direct sum induces a map $\KOSpec(A) \times \KOSpec(A) \to \KOSpec(A)$, taking a pair $(\phi, \psi) \in \KOSpec(A) \times \KOSpec(A)$ to the composition $\contZGr \overset{\phi \oplus \psi}{\to} (A \tensGr \cptOps ) \oplus (A \tensGr \cptOps) = A \tensGr (\cptOps \oplus \cptOps) \subset A \tensGr \cptOps$, where we use an embedding $\cptOps \oplus \cptOps \subset \cptOps$ coming from the diagonal embedding $\cptOps \oplus \cptOps \subset \Mat_2(\cptOps)$ and an even unitary isomorphism $\hilbertGr \oplus \hilbertGr \iso \hilbertGr$.
 The choice of such unitary does not matter up to homotopy.
 It can be shown that this defines a commutative H-group structure on $\KOSpec(A)$, thereby turning $\KThGr_n(A) = \pi_n(\KOSpec(A))$ into an abelian group for all $n \geq 0$.
 By a general principle in homotopy theory, this agrees with the homotopy group structure on $\pi_n$ for $n \geq 1$.
 It is possible to turn $\KOSpec(A)$ into a spectrum so that its homotopy groups are precisely the $\KTh$-theory groups we have just defined, see~\cite{dellAmbrogio-emerson-kandelaki-meyer:AFunctorialEquivariantKTheorySpectrumAndAnEquivariantLefschetzFormula}.
  
 \begin{rem} \label{rem:defineGradedKTheoryClassByBareHom}
  Any graded $*$-homomorphism $\varphi \colon \contZGr \to A$ defines an element $[\varphi] := [\varphi \tensGr e_{11}] \in \KOThGr_0(A)$, where $e_{11}$ is an even rank $1$ projection in $\cptOps$.
 \end{rem}

\subsection{External Product}
There is a comultiplication $\comult \colon \contZGr \to \contZGr \tensGr \contZGr, f \mapsto f(\mathrm{x} \tensGr 1 + 1 \tensGr \mathrm{x})$, given by the functional calculus of the unbounded multiplier $\mathrm{x} \tensGr 1 + 1 \tensGr \mathrm{x}$.
The comultiplication $\comult$ is coassociative and counital (with counit $\eta \colon \contZGr \to \C, \eta(f) = f(0)$).

On the generators $\{ \eu^{-\mathrm{x}^2}, \mathrm{x} \eu^{-\mathrm{x}^2}\}$ of $\contZGr$ the comultiplication satisfies,
\begin{equation}
 \comult(\eu^{-\mathrm{x}^2}) = \eu^{-\mathrm{x}^2} \tensGr \eu^{-\mathrm{x}^2}, \qquad \comult(\mathrm{x} \eu^{-\mathrm{x}^2}) = \mathrm{x} \eu^{-\mathrm{x}^2} \tensGr \eu^{-\mathrm{x}^2} + \eu^{-\mathrm{x}^2} \tensGr \mathrm{x} \eu^{-\mathrm{x}^2}. \label{eq:comultOnGen}
\end{equation}

An alternative construction of $\comult$ not depending on the theory of unbounded multipliers is indicated in~\cite[Section 1.3]{higson-guentner:GroupCstarAlgebrasAndKTheory}.

 Let $\varepsilon > 0$ and denote by $\contZGr(-\varepsilon, \varepsilon)$ the graded ideal in $\contZGr$ consisting of those functions which vanish outside the interval $(-\varepsilon, \varepsilon)$.
 It can be checked that the comultiplication $\comult$ preserves $\contZGr(-\varepsilon, \varepsilon)$ in the sense that $\comult \left(\contZGr(-\varepsilon, \varepsilon)\right) \subseteq \contZGr(-\varepsilon, \varepsilon) \tensGr \contZGr(-\varepsilon, \varepsilon)$.

Before moving on to define the external product, we note the following elementary lemmas since they are essential to our discussion of secondary invariants in \cref{subsec:partialIndex}.

\begin{lem} \label{lem:elementaryHomotopyEqu}
 The inclusion maps $\contZGr(-r,r) \hookrightarrow \contZGr$ and $\contZGr(-r,r) \tensGr \contZGr(-r,r) \hookrightarrow \contZGr \tensGr \contZGr$ are homotopy equivalences of graded $\Cstar$-algebras for all $r > 0$.
\end{lem}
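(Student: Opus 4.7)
The first statement amounts to constructing a homotopy inverse to the inclusion. I fix any odd homeomorphism $\psi \colon \R \to (-r,r)$, for instance $\psi(x) = r \tanh(x)$. Since $\psi^{-1} \colon (-r,r) \to \R$ is a proper odd homeomorphism, the pullback $\rho(f)(x) \coloneqq f(\psi^{-1}(x))$, extended by zero outside $(-r,r)$, defines a graded $\ast$-isomorphism $\rho \colon \contZGr \to \contZGr(-r,r)$. It remains to exhibit homotopies $\iota \circ \rho \simeq \id_{\contZGr}$ and $\rho \circ \iota \simeq \id_{\contZGr(-r,r)}$.

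To produce the homotopy, I interpolate $\psi_s \coloneqq (1-s)\,\id_\R + s\,\psi$ for $s \in [0,1]$. Each $\psi_s$ is odd and strictly increasing; for $s < 1$ it is a homeomorphism $\R \to \R$, while $\psi_1 = \psi$ is a homeomorphism $\R \to (-r,r)$. Letting $\sigma_s$ denote its inverse on $\psi_s(\R)$, I define graded $\ast$-homomorphisms $T_s \colon \contZGr \to \contZGr$ by $T_s(f)(x) \coloneqq f(\sigma_s(x))$ on $\psi_s(\R)$ and zero outside. Oddness of $\sigma_s$ and the fact that $\sigma_s$ is real-valued ensure that each $T_s$ is a graded, Real $\ast$-homomorphism, with $T_0 = \id_{\contZGr}$ and $T_1 = \iota \circ \rho$. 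Restricting the family $T_s$ to $\contZGr(-r,r)$ also gives the homotopy between $\id_{\contZGr(-r,r)}$ and $\rho \circ \iota$.

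The only non-routine step is that $s \mapsto T_s(f)$ is norm-continuous for each $f \in \contZGr$, and the only tricky instant is $s = 1$, where $\psi_s$ ceases to be surjective. Given $\epsilon > 0$, I pick $R$ so that $|f(y)| < \epsilon/4$ for $|y| \geq R$, then any $R' > R$, and set $\delta \coloneqq r - \psi(R')$. On $|x| \leq r - \delta$ the inverses $\sigma_s$ converge uniformly to $\sigma_1 = \psi^{-1}$ as $s \to 1$ (since $\psi_s \to \psi$ uniformly on $[-R',R']$ and each $\psi_s$ is increasing), so uniform continuity of $f$ on a compact set controls $|T_s f(x) - T_1 f(x)|$ by $\epsilon/2$. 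On $|x| > r - \delta$ the estimate $\psi_s(R) = (1-s) R + s\,\psi(R) \to \psi(R) < \psi(R') = r - \delta$ shows that $|\sigma_s(x)| > R$ for $s$ close enough to $1$, so both $|T_s f(x)|$ and $|T_1 f(x)|$ are bounded by $\epsilon/4$. Combining these estimates gives $\|T_s f - T_1 f\|_\infty < \epsilon$.

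For the second claim I invoke the general principle that tensoring with a fixed graded $\Cstar$-algebra $B$ preserves homotopy equivalences: a homotopy $A \to A'$ is the same datum as a graded $\ast$-homomorphism $A \to \cont([0,1], A')$, and applying $(-) \tensGr B$ followed by the canonical map $\cont([0,1], A') \tensGr B \to \cont([0,1], A' \tensGr B)$ transports it to the tensor product. Applying this twice to $\iota$ shows that $\iota \tensGr \iota$ is also a homotopy equivalence. The principal obstacle throughout is the simultaneous control of $T_s(f)(x)$ near the degeneration locus $s = 1$, $|x| = r$, handled by the decomposition above exploiting $f \in \contZ(\R)$.
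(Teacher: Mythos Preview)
The paper states this lemma without proof, calling it elementary; your argument supplies correct details. Your approach via the odd homeomorphism $\psi(x)=r\tanh(x)$ and the linear interpolation $\psi_s=(1-s)\,\id+s\,\psi$ is the natural one, and you correctly identify that the only genuine subtlety is norm-continuity of $s\mapsto T_s(f)$ at $s=1$, where the range of $\psi_s$ collapses from $\R$ to $(-r,r)$; your two-region estimate handles this properly. Two minor remarks: for continuity at $s<1$ (which you call routine) it is cleanest to note that all $T_s$ are contractive, so it suffices to check on $f\in C_c(\R)$, where all $T_s(f)$ are supported in a common compact set and uniform convergence of $\sigma_s$ there is immediate; and for the claim that $T_s$ restricts to $\contZGr(-r,r)$ you are implicitly using $\psi_s(r)\leq r$, which holds since $\tanh(r)<1$. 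The tensor-product argument via functoriality of $(-)\tensGr B$ on homotopies is standard and correct.
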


\begin{lem} \label{cor:compressionCommutesWithComult}
 Let $\psi \colon \contZGr \to \contZGr(-r,r)$ be a graded $*$-homomorphism that is a homotopy inverse to the inclusion $\contZGr(-r,r) \hookrightarrow \contZGr$.
 Then $\comult \circ \psi$ and $\psi \tensGr \psi \circ \comult$ are homotopic as graded $*$-homomorphisms $\contZGr \to \contZGr(-r,r) \tensGr \contZGr(-r,r)$.
\end{lem}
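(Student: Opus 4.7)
The plan is to exploit the compatibility of $\comult$ with the ideal inclusion $\iota \colon \contZGr(-r,r) \hookrightarrow \contZGr$ and then use that $\psi$ is a homotopy inverse of $\iota$ to slide $\psi \tensGr \psi$ past $\comult$.

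First, by the stability of $\contZGr(-r,r)$ under $\comult$ noted in the paragraph preceding the lemma, the restriction $\comult_r \colon \contZGr(-r,r) \to \contZGr(-r,r) \tensGr \contZGr(-r,r)$ is well-defined and fits into the strictly commutative square
\begin{equation*}
(\iota \tensGr \iota) \circ \comult_r = \comult \circ \iota.
\end{equation*}
Since $\psi$ takes values in $\contZGr(-r,r)$, the map $\comult \circ \psi$ lands in $\contZGr(-r,r) \tensGr \contZGr(-r,r)$ and, viewed there, equals $\comult_r \circ \psi$.

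The key step is then the chain
\begin{equation*}
(\psi \tensGr \psi) \circ \comult \homotopyRel (\psi \tensGr \psi) \circ \comult \circ \iota \circ \psi = \bigl((\psi \circ \iota) \tensGr (\psi \circ \iota)\bigr) \circ \comult_r \circ \psi \homotopyRel \comult_r \circ \psi,
\end{equation*}
where the first homotopy inserts $\iota \circ \psi \homotopyRel \id_{\contZGr}$ on the right of $\comult$, the middle equality applies the commutative square from the first step (and the fact that graded tensor product distributes over composition), and the final homotopy applies $\psi \circ \iota \homotopyRel \id_{\contZGr(-r,r)}$ in each tensor factor. Comparing the two ends gives the claim.

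The only delicate point is to verify that composition of $*$-homomorphisms and the graded maximal tensor product are continuous in the point-norm topology, so that the individual homotopies can be spliced together and tensored to yield a single continuous path of graded $*$-homomorphisms $\contZGr \to \contZGr(-r,r) \tensGr \contZGr(-r,r)$. This is a standard property of the graded maximal tensor product and should be the main, but essentially formal, thing to confirm.
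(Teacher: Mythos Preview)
Your argument is correct. The paper does not supply a proof of this lemma, treating it as an elementary consequence of \cref{lem:elementaryHomotopyEqu} together with the fact (stated just before) that $\comult$ preserves the ideal $\contZGr(-r,r)$; your chain of homotopies is precisely the formal verification one would expect, and the continuity remarks you flag are indeed routine.
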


\begin{defi}[{\cite[Section 1.7]{higson-guentner:GroupCstarAlgebrasAndKTheory}}]
 The external product $\KOThGr_n(A) \tens \KOThGr_m(B) \overset{\KOThProd}{\to} \KOThGr_{n+m}(A \tensGr B)$ is induced by the map $\KOSpec(A) \smashProd \KOSpec(B) \to \KOSpec(A \tensGr B)$, taking a pair $(\phi, \psi) \in \KOSpec(A) \times \KOSpec(B)$ to the composition $\contZGr \overset{\triangle}{\to} \contZGr \tensGr \contZGr \overset{\phi \tensGr \psi}{\to} A \tensGr \cptOps \tensGr B \tensGr \cptOps \cong A \tensGr B \tensGr ( \cptOps \tensGr \cptOps ) \cong A \tensGr B \tensGr \cptOps$.
\end{defi}

Here we implicitly use a fixed isomorphism $\cptOps \tensGr \cptOps \iso \cptOps$ coming from an even unitary isomorphism $\hilbertGr \tensGr \hilbertGr \iso \hilbertGr$.
As in the case of the direct sum, the choice of such an identification does not matter up to homotopy.
\begin{rem}
 If $x = [\phi] \in \KOThGr_0(A)$ and $y = [\psi] \in \KOThGr_0(B)$ are represented by homomorphisms $\phi \colon \contZGr \to A$ and $\psi \colon \contZGr \to B$ as in \cref{rem:defineGradedKTheoryClassByBareHom}, then $x \KOThGrProd y$ is represented by $\phi \tensGr \psi \circ \comult \colon \contZGr \to A \tensGr B$ (the rank $1$ projections take care of themselves because $e_{11} \tensGr e_{11} \in \cptOps \tensGr \cptOps \iso \cptOps$ is again an even rank $1$ projection).
\end{rem}

\subsection{Bott periodicity} \label{subsec:bott}
In this subsection, we briefly sketch a variant of the \enquote{Dirac--dual-Dirac} approach to Bott periodicity using Clifford algebras.
For more elaborations and proofs we refer to \cite[Section 1.10]{higson-guentner:GroupCstarAlgebrasAndKTheory},~\cite[Lemma 4.3]{dumitrascu:asymptoticMorphismKHomology}.

The  \emph{dual Dirac element} or \emph{Bott element} is the class $b_n \in \KOThGr_0\left(\contZ(\R^n) \tensGr \cliffAlgDual_n \right)$ defined by the graded $\ast$-homomorphism
\begin{equation*}
   \beta_n \colon \contZGr \to \contZ(\R^n, \cliffAlgDual_n), \quad \beta(f) = \left( v \mapsto f(v) \right),
\end{equation*}
where \enquote{$f(v)$} denotes the application of $f$ on $v \in \R^n \subseteq \cliffAlgDual_n$ via the functional calculus in $\cliffAlgDual_n$.

We will occasionally use \emph{asymptotic morphisms} and ideas from $\mathrm{E}$-theory.
For general references see~\cite{guentner-higson-trout:EquivariantETheoryForCstarAlgebras,higson-guentner:GroupCstarAlgebrasAndKTheory}.
This first surfaces in the following, where we use that an asymptotic morphism $\alpha \colon \contZGr \tensGr A \asympMorphism B$ induces a map on $\KTh$-theory $\alpha_\KPh \colon \KThGr_\KPh(A) \to \KThGr_\KPh(B)$, see~\cite[Remark 1.11]{higson-guentner:GroupCstarAlgebrasAndKTheory}.
There is an asymptotic morphism $\alpha \colon \contZGr \tensGr \contZ(\R^n) \asympMorphism \cptOps \tensGr \cliffAlg_n$, called the \emph{Dirac element}.
Indeed, $\alpha$ is defined using the Dirac operator $\clnDiracOp_{\R^n} = \sum_{i = 1}^n e_i \cdot \frac{\partial}{\partial x_i}$ on $\Lp^2(\R^n, \cliffAlg_n)$, $\alpha_t(f \tensGr g) = f(\frac{1}{t} \clnDiracOp_{\R^n}) g$.
It is the inverse of the dual Dirac element in the sense that the induced homomorphism $\alpha_* \colon \KOThGr_0(\contZ(\R^n) \tensGr \cliffAlgDual_n) \to \KOThGr_0(\cptOps) = \KOThGr_0(\RReal)$ maps $b_n$ to the unit element $1 \in \KOThGr_0(\RReal)$.
Employing a variant of Atiyah's rotation trick, one can conclude that the \emph{Bott map},
\begin{gather*}
   \KOThGr_0(A) \to \KOThGr_0( A \tensGr \contZ(\R^n) \tensGr \cliffAlg^*_n ), \quad x \mapsto x \KOThGrProd b_n,
\end{gather*}
is an isomorphism for any graded $\Cstar$-algebra $A$ and all $n \in \N$.
In particular, there is a natural isomorphism
\begin{equation}
 \KOThGr_0(A \tensGr \cliffAlg_n) \iso \KOThGr_0\left( A \tensGr \contZ(\R^n) \tensGr \cliffAlg_{n} \tensGr \cliffAlgDual_n \right) \iso \KOThGr_0\left(\suspCstar^n A \right) = \KOThGr_n\left(A\right), \label{eq:higherKTheoryByCliffAlg}
\end{equation}
where the second isomorphism follows from the fact that $\cliffAlg_n \tensGr \cliffAlgDual_n = \cliffAlg_{n,n} \iso \Mat_{2^{n}}(\RReal)$ with a standard even grading.
Moreover, $\cliffAlg_8 \iso \Mat_{16}(\RReal)$ with a standard even grading, which together with \labelcref{eq:higherKTheoryByCliffAlg} implies $8$-fold periodicity of real $\KTh$-theory.
Similarly, in the complex case, we get $2$-fold periodicity due to $\cliffAlgC_2 \iso \Mat_2(\C)$.
\subsection{Long exact sequences} \label{subsec:longExactSeq}
Let $0 \to I \to A \to A/I \to 0$ be a short exact sequence of graded $\Cstar$-algebras.
Then the induced map $\KOSpec \left(A\right) \to \KOSpec(A/I)$ is a Serre fibration with fiber $\KOSpec(I)$.
Thus, there is a long exact sequence of homotopy groups which yields the natural long exact sequence in $\KOTh$-theory,
\begin{equation*}
	\dotsm \to \KOThGr_{n+1}(A/I) \overset{\bd}{\to} \KOThGr_n(I) \to \KOThGr_n(A) \to \KOThGr_n(A/I) \to \dotsm \to \KOThGr_0(A/I).
\end{equation*}
Using Bott periodicity one can also define $\KOTh$-theory for negative degrees and extend the exact sequence to the right.
In fact, it then becomes the $24$ term (respectively $6$ term in the complex case) cyclic exact sequence.

If $B$ is another graded $\Cstar$-algebra, then the sequence $0 \to I \tensGr B \to A \tensGr B \to A/I \tensGr B \to 0$ is exact\footnote{Recall that we use the maximal tensor product.} and it can be checked that the boundary map is compatible with the external product, that is, $\bd(x) \KThProd y = \bd(x \KThProd y)$ for $x \in \KThGr_\KPh(A/I), y \in \KThGr_\KPh(B)$.

   In addition, there is a Mayer--Vietoris sequence for the $\KOTh$-theory of graded $\Cstar$-algebras.
   Here we consider a graded $\Cstar$-algebra $A$ and two closed two-sided graded ideals $I_1, I_2 \idealRel A$ such that $I_1 + I_2 = A$.
   Then the long exact \emph{Mayer--Vietoris sequence} reads as follows:
   \begin{equation}
      \dotsm \to \KOThGr_{n+1}(A) \overset{\bdMV}{\longrightarrow} \KOThGr_n(I_1 \cap I_2) \to \KOThGr_n(I_1) \oplus \KOThGr_n(I_2) \to \KOThGr_n(A) \to \dotsm \label{eq:abstractMV}
   \end{equation}
   To construct this sequence, consider the auxiliary $\Cstar$-algebra $\pathsCstarMV(A; I_1,I_2)$ which consists of paths $f \colon \Rext \to A$ such that $f(-\infty) \in I_{1}$ and $f(+\infty) \in I_2$.
      The inclusion $I_1 \cap I_2 \hookrightarrow \pathsCstarMV(A; I_1,I_2)$ sending $b \in I_1 \cap I_2$ to the constant path at $b$ is a $\KOTh$-isomorphism, see for instance~\cite[Lemma 3.1]{siegel:mayerVietorisAnalyticStructureGroup}.
    Then there is a short exact sequence, $0 \to \suspCstar A \to \pathsCstarMV(A; I_1, I_2) \overset{\ev_{\mp \infty}}{\longrightarrow} I_1 \oplus I_2 \to 0$,
   which induces a long exact sequence
   \begin{equation*}
      \dotsm \to \KOThGr_{n}(\suspCstar A) \to \KOThGr_n(\pathsCstarMV(A; I_1, I_2)) \to \KOThGr_n(I_1 \oplus I_2) \to \KOThGr_{n-1}(\suspCstar A) \to \dotsm,
   \end{equation*}
   which becomes precisely of the form \labelcref{eq:abstractMV}.
   Again, it can be verified that the Mayer--Vietoris sequence is compatible with the external products. In particular, there is a commutative diagram
   \begin{equation*}
   \begin{tikzcd}[row sep=small]
    \KOThGr_{n+1}(A) \tens \KOThGr_m(B) \rar{\bdMV \tens \id} \dar{\KThProd}& \KOThGr_n(I_1 \cap I_2) \tens \KOThGr_m(B) \dar{\KThProd}\\
		    \KOThGr_{n+m+1}(A \tensGr B) \rar{\bdMV} & \KOThGr_{n+m}((I_1 \tensGr B) \cap (I_2 \tensGr B)).
    \end{tikzcd}
   \end{equation*}
   
   \begin{rem}\label{rem:alternativeMVBoundary}
     There is an alternative description of the Mayer--Vietoris boundary map in terms of the boundary map of some short exact sequence.
     Indeed, the inclusion induces an isomorphism $\iota \colon {I_1}/{I_1 \cap I_2} \iso A/{I_2}$.
     Consider the boundary map $\bd_{1} \colon \KTh_{\KPh}({I_1}/{I_1 \cap I_2}) \to \KTh_{\KPh-1}(I_1 \cap I_2)$ associated to $0 \to I_1 \cap I_2 \to I_1 \to {I_1}/{I_1 \cap I_2} \to 0$ and the canonical projection $\pi_2 \colon A \to A/I_2$.
     Then we have
     \begin{equation*}
       \bdMV(x) = \bd_1 (\iota^{-1} \circ \pi_2)_\KPh(x) \in \KTh_{\KPh-1}(I_1 \cap I_2)
     \end{equation*}
     for all $x \in \KTh_\KPh(A)$.
   \end{rem}

\section{Yu's localization algebras} \label{sec:localizationAlgebras}
Yu~\cite{yu:localizationAlgebrasAndCoarseBaumConnes} has introduced the localization algebra $\roeAlgLoc(X)$ to provide an alternative model for the $\KTh$-homology of a proper metric space $X$.
   In fact, there is an isomorphism $\KOTh_n(X) \overset{\iso}{\to} \KOTh_n(\roeAlgLoc(X))$, called the \enquote{local index map}.
   This can be proved for simplicial complexes using a Mayer--Vietoris argument~\cite{yu:localizationAlgebrasAndCoarseBaumConnes}, or alternatively, the general case can be reduced to Paschke duality~\cite{roe-qiao:onTheLocalizationAlgebraOfYu}.
   
   In this section, we review the definition of the localization algebras and introduce a $\cliffAlg_n$-linear version thereof.

\subsection{\texorpdfstring{$\cliffAlg_n$}{Cln}-linear localization algebras} 
   Let $X$ be a proper metric space endowed with an isometric, free and proper action of a countable discrete group $\Gamma$.
   
   A \emph{$\Gamma$-equivariant $X$-module}, or simply \emph{$(X, \Gamma)$-module}, is a Hilbert space $H$ together with a $\ast$-representation $\rho \colon \contZ(X) \to \bndOps(H)$ and a unitary representation $U \colon \Gamma \to \unitary(H)$ such that $\rho(f) U_\gamma = U_\gamma \rho(\gamma^* f) $ for all $\gamma \in \Gamma$ and $f \in \contZ(X)$.
   If $\Gamma$ is the trivial group we refer to it just as an \emph{$X$-module}.
   
    A \emph{$\cliffAlg_n$-linear $\Gamma$-equivariant $X$-module}, or simply \emph{$(X, \Gamma, \cliffAlg_n)$-module}, is defined analogously but we replace the Hilbert space $H$ by a graded Hilbert $\cliffAlg_n$-module $\clnXmodule$ and require that the representations are by even bounded $\cliffAlg_n$-linear operators.\footnote{In the case of $\cliffAlg_n$, an operator is bounded and $\cliffAlg_n$-linear iff it is an adjointable Hilbert-module map.}
    
    An $(X,\Gamma)$-module $H$ is called \emph{ample} if the representation $\rho$ is non-degenerate and $\rho(f)$ is not a compact operator for any $f \in \contZ(X)$, $f \neq 0$.
    In the $\cliffAlg_n$-linear setting, we say an $(X,\Gamma, \cliffAlg_n)$-module is \emph{ample} if it is isomorphic to $H \tensGr \cliffAlg_n$, where $H$ is an ample $(X, \Gamma)$-module.

Let $T \in \bndOps(\clnXmodule_X, \clnXmodule_Y)$, where $X$ and $Y$ are proper metric spaces with corresponding modules $\clnXmodule_X$ and $\clnXmodule_Y$.
The \emph{support} of $T$ is the subset $\supp(T) \subseteq Y \times X$ such that $(y, x) \not\in \supp(T)$ if and only if there exists $f \in \contZ(X)$ and $g \in \contZ(Y)$ such that $g T f = 0$ but $f(x) \neq 0 \neq g(y)$.
In the expression \enquote{$g T f$} we have suppressed the representations of the function algebras to simplify the notation.

The \emph{propagation} $\propag(T)$ of an operator $T \in \bndOps(\clnXmodule_X)$ is the minimal $R \in [0, +\infty]$ such that $\supp(T)$ lies in the $R$-neighborhood of the diagonal in $X \times X$.

An operator $T \in \bndOps(\clnXmodule_X)$ is called \emph{locally compact} if $f T \in \cptOps(\clnXmodule_X) \ni T f$ for all $f \in \contZ(X)$, and \emph{pseudo-local} if $f T - T f \in \cptOps(\clnXmodule_X)$ for all $f \in \contZ(X)$.

Let $A \subset X$ be a subspace. We say that  $T \in \bndOps(\clnXmodule_X)$ is \emph{supported near $A$} if there exists $R \geq 0$ such that $\supp(T) \subseteq \nbh_R(A) \times \nbh_R(A)$, where $\nbh_R(A)$ denotes the $R$-neighborhood of $A$ in $X$.

\begin{defi} \label{defi:locAlgebras}
   Let $\clnXmodule$ be a fixed ample $(X, \Gamma, \cliffAlg_n)$-module. Let $A \subset X$ be a closed $\Gamma$-invariant subset.
   \begin{enumerate}[(1)]
      \item Denote by $\roeAlg(X; \cliffAlg_n)^\Gamma$ the \emph{$\cliffAlg_n$-linear $\Gamma$-equivariant Roe algebra}, that is, the $\Cstar$-algebra generated by all the $\Gamma$-equivariant, bounded, $\cliffAlg_n$-linear operators on~$\clnXmodule$ which are locally compact and of finite propagation. \label{item:defiRoeAlg}
      \item The closure of the set of all operators supported near $A$ with properties as in \labelcref{item:defiRoeAlg} is denoted by $\roeAlg(A \subset X; \cliffAlg_n)^\Gamma$.
      Then $\roeAlg(A \subset X; \cliffAlg_n)^\Gamma$ is an ideal in $\roeAlg(X; \cliffAlg_n)^\Gamma$. \label{item:defiPartialRoeAlg}
      
      \item Denote by $\roeAlgLoc(X; \cliffAlg_n)^\Gamma$ the \emph{$\cliffAlg_n$-linear $\Gamma$-equivariant localization algebra}, that is, the $\Cstar$-subalgebra of $\cont([1, \infty), \roeAlg(X; \cliffAlg_n)^\Gamma)$ generated by the bounded and uniformly continuous functions $L \colon [1,\infty) \to \roeAlg(X;\cliffAlg_n)^\Gamma$ such that the propagation of $L(t)$ is finite for all $t \geq 1$ and tends to zero as $t \to \infty$.
      
      \item \begin{sloppypar}Denote by $\roeAlgLoc(A \subset X; \cliffAlg_n)^\Gamma$ the  closure of the $\ast$-subalgebra of $\roeAlgLoc(X; \cliffAlg_n)^\Gamma$ consisting of those $L \colon [1,\infty) \to \roeAlg(X; \cliffAlg_n)^\Gamma$ such that $\supp(L(t)) \subseteq \nbh_{c(t)}(A\times A)$ for some function $c \colon [1,\infty) \to \Rgeq $ with $c(t) \to 0$ as $t \to \infty$.
      Then $\roeAlgLoc(A~\subset~X;~\cliffAlg_n)^\Gamma$ is an ideal of $\roeAlgLoc(X; \cliffAlg_n)^\Gamma$.\end{sloppypar}
      
      \item Evaluation at $1$ yields a surjective $\ast$-homomorphism $\ev_1 \colon \roeAlgLoc(X; \cliffAlg_n)^\Gamma \to \roeAlg(X; \cliffAlg_n)^\Gamma$, the kernel of which we denote by $\roeAlgLocZ(X; \cliffAlg_n)^\Gamma$.
      More generally, we define $\roeAlgLocP{A}(X; \cliffAlg_n)^\Gamma$ to be the preimage of $\roeAlg(A \subset X; \cliffAlg_n)^\Gamma$ under~$\ev_1$.
           
      \item If $Z \subseteq A \subseteq X$ are two $\Gamma$-invariant closed subsets, then we define the ideal $\roeAlgLocP{Z}(A \subset X; \cliffAlg_n)^\Gamma$ to consist of all the elements $L \in \roeAlgLoc(A \subset X; \cliffAlg_n)^\Gamma$ such that $\ev_1(L) \in \roeAlg(Z \subset X; \cliffAlg_n)^\Gamma$.
   \end{enumerate}
\end{defi}
\begin{rem}[Notation]If we wish to emphasize which $(X, \Gamma, \cliffAlg_n)$-module is used, we write $\roeAlgLoc(X, \clnXmodule; \cliffAlg_n)^\Gamma$.
However, since the $\KTh$-theory of these algebras does not depend on the choice of the ample module (see~\cref{subsec:functoriality}), we will usually not do so.
There are also the simpler versions of the Roe and localization algebras where there is no group action or no Clifford algebra present (take $\Gamma = 1$ or $n = 0$).
We denote these variants by dropping the group or the Clifford algebra in our notation, for instance $\roeAlgLoc(X; \cliffAlg_n)$, $\roeAlgLoc(X)^\Gamma$, $\roeAlgLoc(X)$, $\roeAlgLoc(X, H)$, \dots
\end{rem}

The most important feature of the ideals such as $\roeAlgLoc(A \subset X)$ is that their $\KTh$-theory agrees with the $\KTh$-theory of the corresponding algebra associated with the subspace $A$, see \cref{lem:idealsOfSubspace} below.

In addition, we will occasionally need the \emph{structure algebra}~$\structureAlg(X)^\Gamma$, which is defined to be the $\Cstar$-algebra generated by all $\Gamma$-equivariant, pseudo-local operators of finite propagation.
We also have the corresponding localization algebra version,~$\structureAlgLoc(X)^\Gamma$.

\subsection{Functoriality of localization algebras} \label{subsec:functoriality}

The $\KOTh$-theory of the localization algebra is functorial with respect to uniformly continuous and coarse maps.
In the context of Roe algebras (and similarly, in the Paschke duality picture of $\KOTh$-homology), the induced map on the algebra level is given by conjugation with an isometry which in a suitable sense \enquote{covers} the original map on space level.
Detailed treatments of these ideas can be found in~\cite[Chapters 5 and 6]{higson-roe:analyticKHomology},\ \cite{roe:indexTheoryCoarseGeometryTopologyOfManifolds,siegel:mayerVietorisAnalyticStructureGroup}.
In the case of localization algebras, functoriality is implemented by an appropriate family of covering isometries~\cite{yu:localizationAlgebrasAndCoarseBaumConnes,roe-qiao:onTheLocalizationAlgebraOfYu}.
In this section, we review this construction in a way that is adapted to the $\Gamma$-equivariant and $\cliffAlg_n$-linear setting.

Let $X$ and $Y$ be proper metric spaces endowed with free and proper $\Gamma$-actions.
A proper map $f \colon X \to Y$ is uniformly continuous and coarse (\enquote{\emph{ucc}}) if and only if there exists a non-decreasing function $S \colon \Rgeq \to \Rgeq$ which is continuous at $0$ with $S(0) = 0$ such that $d_Y(f(x_1), f(x_2)) < S(d_X(x_1,x_2))$ for all $x_1,x_2 \in X$.

\begin{defi}\label{defi:coverUCCoarseMap}
Let $f \colon X \to Y$ be a $\Gamma$-equivariant ucc map and fix an $(X,\Gamma, \cliffAlg_n)$-module $\clnXmodule_X$ and a $(Y,\Gamma, \cliffAlg_n)$-module $\clnXmodule_Y$.
We say that a uniformly continuous family of $\Gamma$-equivariant $\cliffAlg_n$-linear even isometries $V_t \colon \clnXmodule_X \to \clnXmodule_Y$, $t \in [1,\infty)$, \emph{covers}~$f$ if $\sup \left\{ d(y, f(x)) \mid (y,x) \in \supp(V_t) \right\} \to 0$ as $t \to \infty$.
\end{defi}

We note that in \cite{yu:localizationAlgebrasAndCoarseBaumConnes,roe-qiao:onTheLocalizationAlgebraOfYu} functoriality is discussed only with respect to proper Lipschitz maps but the construction generalizes directly to the case of ucc maps.
Indeed, extending the arguments from \cite[313]{yu:localizationAlgebrasAndCoarseBaumConnes} to ucc maps and the $\Gamma$-equivariant $\cliffAlg_n$-linear setup yields the following lemmas.

From now on we fix an ample $(X,\Gamma, \cliffAlg_n)$-module $\clnXmodule_X$ and an ample $(Y,\Gamma, \cliffAlg_n)$-module $\clnXmodule_Y$.
\begin{lem}
   Let $f \colon X \to Y$ be a $\Gamma$-equivariant ucc map.
   Then there exists a family of isometries $V_t \colon \clnXmodule_X \to \clnXmodule_Y \oplus \clnXmodule_Y$, $t \in [1,\infty)$, that covers $f$ as in \cref{defi:coverUCCoarseMap}.
\end{lem}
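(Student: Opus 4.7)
I want to construct $V_t$ by locally approximating $f$ on a partition of $X$ whose mesh tends to $0$ as $t \to \infty$; the direct-sum enlargement of the target module provides enough ampleness room both to assemble the piecewise isometry $\Gamma$-equivariantly at each fixed $t$ and to interpolate continuously in $t$ using Kuiper-type flexibility.

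Concretely, I would fix a continuous function $\varepsilon \colon [1,\infty) \to (0,\infty)$ with $\varepsilon(t) \to 0$ and, using the ucc hypothesis, pick $\delta(t) > 0$ such that $d_X(x,x') < \delta(t)$ implies $d_Y(f(x),f(x')) < \varepsilon(t)$. For each $t$, using properness of $X$ and freeness of the $\Gamma$-action, I choose a $\Gamma$-invariant locally finite Borel partition $X = \bigsqcup_i E_{t,i}$ of mesh $< \delta(t)$, obtained by translating Borel subdivisions of a fundamental domain, and similarly a $\Gamma$-invariant locally finite Borel partition $Y = \bigsqcup_j B_{t,j}$ of mesh $< \varepsilon(t)$; for each $i$ I pick an index $j(i)$ with $f(E_{t,i}) \subseteq B_{t,j(i)}$.

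Next, each $\chi_{E_{t,i}} \clnXmodule_X$ is a countably generated graded Hilbert $\cliffAlg_n$-module, while ampleness of $\clnXmodule_Y$ makes $\chi_{B_{t,j}}(\clnXmodule_Y \oplus \clnXmodule_Y)$ a graded Hilbert $\cliffAlg_n$-module of infinite rank, so by Kasparov stabilization there exist even $\cliffAlg_n$-linear isometries from each $\chi_{E_{t,i}} \clnXmodule_X$ into the corresponding $\chi_{B_{t,j(i)}}(\clnXmodule_Y \oplus \clnXmodule_Y)$. Choosing them on a set of orbit representatives and spreading via the $\Gamma$-action yields a $\Gamma$-equivariant assembly $V_t \colon \clnXmodule_X \to \clnXmodule_Y \oplus \clnXmodule_Y$ whose support satisfies $\sup_{(y,x) \in \supp(V_t)} d(y,f(x)) \leq 2\varepsilon(t) + \delta(t) \to 0$.

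The main obstacle is uniform continuity of $t \mapsto V_t$, since the construction above is pointwise in $t$ and the partitions necessarily jump. I would resolve this by defining $V_t$ only at a discrete sequence $t_k \to \infty$ and interpolating on each interval $[t_k, t_{k+1}]$ by a continuous path of $\Gamma$-equivariant even $\cliffAlg_n$-linear isometries $\clnXmodule_X \to \clnXmodule_Y \oplus \clnXmodule_Y$. Such paths exist because the space of such isometries is path-connected, by an equivariant Kuiper-type argument applied to $\clnXmodule_Y \oplus \clnXmodule_Y$ (this is precisely where the doubling is exploited). By choosing the gaps $t_{k+1} - t_k$ to grow with $k$ sufficiently fast, the interpolating paths become as slow, hence as uniformly continuous, as required; and the bound on $\sup\{d(y,f(x)) \mid (y,x) \in \supp(V_t)\}$ still tends to zero because both endpoints of each interpolation interval have bounds tending to zero and the interpolating isometries can be arranged to have supports contained in the union of the two endpoint supports.
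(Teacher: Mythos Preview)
Your strategy follows the construction the paper cites from Yu, and the discrete part is essentially right (modulo a small slip: $f(E_{t,i})$ need not lie inside a single cell $B_{t,j}$ of the $Y$-partition, but it suffices that it meets one, which still gives a support bound of order $\varepsilon(t)$). The genuine gap is in the interpolation step. Invoking a ``Kuiper-type argument'' tells you that the space of $\Gamma$-equivariant even $\cliffAlg_n$-linear isometries $\clnXmodule_X \to \clnXmodule_Y \oplus \clnXmodule_Y$ is path-connected, but it gives you no control whatsoever over the \emph{support} of the intermediate isometries along such a path; your final claim, that the interpolants can be arranged to have support in the union of the endpoint supports, is therefore unjustified as written. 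Since you have already used both summands of $\clnXmodule_Y \oplus \clnXmodule_Y$ at the discrete stage, there is no remaining orthogonal room in which to perform a controlled rotation.

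The fix --- which is precisely Yu's device and the actual reason for the doubling --- is to construct each discrete isometry $V_k$ as a map into a \emph{single} copy of $\clnXmodule_Y$ (ampleness of $\clnXmodule_Y$ alone suffices for this; no doubling is needed at the discrete stage), then place $V_k$ into the first or second summand of $\clnXmodule_Y \oplus \clnXmodule_Y$ according to the parity of $k$, and connect consecutive ones on $[k,k+1]$ by the explicit rotation
\[
t \longmapsto \cos\bigl(\tfrac{\pi}{2}(t-k)\bigr)\,\tilde V_k \;+\; \sin\bigl(\tfrac{\pi}{2}(t-k)\bigr)\,\tilde V_{k+1},
\]
where $\tilde V_k$ and $\tilde V_{k+1}$ denote the embeddings into the two summands and hence have orthogonal ranges. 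This path is manifestly $\Gamma$-equivariant, even, $\cliffAlg_n$-linear, Lipschitz in $t$ (so there is no need to stretch the intervals), and has support contained in $\supp(V_k)\cup\supp(V_{k+1})$. With this correction your argument goes through and coincides with the one the paper references.
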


\begin{lem}
   Let $f \colon X \to Y$ be a $\Gamma$-equivariant ucc map and $V_t \colon \clnXmodule_X \to \clnXmodule_Y \oplus \clnXmodule_Y$ be a family of isometries that covers $f$.
   Then conjugation by $V_t$ induces a $\ast$-homomorphism 
   \begin{equation*}
     \Ad_{V_t} \colon \roeAlgLoc(X; \cliffAlg_n)^\Gamma \to \Mat_2( \roeAlgLoc(Y; \cliffAlg_n)^\Gamma),\quad L \mapsto (t \mapsto V_t L_t V_t^*).
   \end{equation*}
   The induced map $\left(\Ad_{V_t}\right)_\KPh \colon \KOThGr_\KPh(\roeAlgLoc(X; \cliffAlg_n)^\Gamma) \to \KOThGr_\KPh(\roeAlgLoc(Y; \cliffAlg_n)^\Gamma)$ does not depend on the choice of the family of isometries $V_t$ covering $f$.
   In particular, the $\KOTh$-theory of $\roeAlgLoc(X; \cliffAlg_n)^\Gamma$ does not depend on the choice of the ample $(X,\Gamma,\cliffAlg_n)$-module up to canonical isomorphism.
\end{lem}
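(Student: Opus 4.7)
The plan is to establish three claims in sequence: that $\Ad_{V_t}$ is a well-defined $\ast$-homomorphism, that its induced map on $\KOThGr_\KPh$ is independent of the choice of covering family, and that this implies independence of the ample module. For the first claim, given $L \in \roeAlgLoc(X;\cliffAlg_n)^\Gamma$, I would check that $t \mapsto V_t L_t V_t^*$ lies in $\Mat_2(\roeAlgLoc(Y;\cliffAlg_n)^\Gamma)$. Equivariance and $\cliffAlg_n$-linearity follow directly from the corresponding properties of $V_t$; local compactness follows by a standard argument using the shrinking-support condition on $V_t$ (for $g \in \contZ(Y)$, one approximates $g V_t$ by $V_t (g \circ f)$ and invokes local compactness of $L_t$). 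The essential estimate is the propagation bound
\begin{equation*}
  \propag(V_t L_t V_t^*) \leq S(\propag(L_t)) + 2\varepsilon(t) \xrightarrow{t \to \infty} 0,
\end{equation*}
where $S$ is a modulus of continuity for $f$ and $\varepsilon(t) \to 0$ measures the distance of $\supp(V_t)$ from the graph of $f$. Uniform continuity in $t$ follows by a $3\varepsilon$-argument from the uniform continuity of $V_t$ and $L_t$, and the $\ast$-homomorphism property is immediate from $V_t^* V_t = \id$.

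For independence of the covering family, suppose both $V_t$ and $W_t$ cover $f$. I would introduce the rotated path of isometries $U_{t,\theta} \colon \clnXmodule_X \to (\clnXmodule_Y \oplus \clnXmodule_Y)^{\oplus 2}$ defined by
\begin{equation*}
  U_{t,\theta} \xi = \bigl( \cos\theta \cdot V_t \xi,\, \sin\theta \cdot W_t \xi \bigr), \qquad \theta \in [0, \pi/2].
\end{equation*}
A direct computation gives $U_{t,\theta}^* U_{t,\theta} = \id$, and $\supp(U_{t,\theta}) \subseteq \supp(V_t) \cup \supp(W_t)$ uniformly in $\theta$, so $t \mapsto U_{t,\theta}$ covers $f$ for every $\theta$ with a shrinking estimate independent of $\theta$. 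By the first step, $\Ad_{U_{t,\theta}}$ yields a $\theta$-continuous path of $\ast$-homomorphisms $\roeAlgLoc(X;\cliffAlg_n)^\Gamma \to \Mat_4(\roeAlgLoc(Y;\cliffAlg_n)^\Gamma)$ interpolating between the top-left corner inclusion of $\Ad_{V_t}$ at $\theta = 0$ and the bottom-right corner inclusion of $\Ad_{W_t}$ at $\theta = \pi/2$. The two corner inclusions $\Mat_2 \hookrightarrow \Mat_4$ induce the same stabilization isomorphism on $\KOThGr_\KPh$, being related by conjugation by a permutation unitary which is inner in $\Mat_4$; this forces $\left(\Ad_{V_t}\right)_\KPh = \left(\Ad_{W_t}\right)_\KPh$.

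For independence of the ample module, I would apply the preceding steps to $f = \id_X$. Given two ample $(X, \Gamma, \cliffAlg_n)$-modules $\clnXmodule$ and $\clnXmodule'$, pick covering families of isometries in both directions; their compositions cover $\id_X$ and, by the independence result, induce the identity on $\KOThGr_\KPh$ after the natural stabilization identification coming from the trivial covering $V_t = \id \oplus 0$ of $\id_X$. This furnishes a canonical pair of mutually inverse isomorphisms. The main obstacle is the second step: one has to ensure that the entire rotation path stays inside the localization algebra by verifying that the propagation estimate from step one is in fact uniform in $\theta$ (it is, because the support of $U_{t,\theta}$ is controlled by $\supp(V_t) \cup \supp(W_t)$ regardless of $\theta$), and one must cleanly identify the two corner inclusions on $\KOThGr_\KPh$. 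The other verifications in steps one and three are routine once the support-shrinking condition is exploited correctly.
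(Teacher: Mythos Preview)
Your proposal is correct and follows essentially the same approach as the paper, which does not supply its own proof but defers to Yu's original argument (the rotation-trick homotopy between two covering families of isometries, followed by specialization to $f=\id_X$). Your write-up is in fact more detailed than the paper's treatment; the propagation estimate, the rotation path $U_{t,\theta}$, and the corner-inclusion identification are exactly the standard steps from the cited references, adapted to the ucc and $\cliffAlg_n$-linear setting as the paper indicates.
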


Hence the map $f$ induces a well-defined map on the $\KTh$-theory of the localization algebras, and we will incorporate this fact in our notation by writing $f_\KPh = (\Ad_{V_t})_\KPh$.
Restricting $\Ad_{V_t}$ to the ideal $\roeAlgLocZ(X; \cliffAlg_n)^\Gamma$ yields functoriality for $\KOThGr_\KPh(\roeAlgLocZ(X; \cliffAlg_n)^\Gamma)$.
If $A \subseteq X$ and $B \subseteq Y$ are closed $\Gamma$-invariant subsets such that $f(A) \subseteq B$, we may employ the same procedure to obtain an induced map $f_* \colon \KThGr_\KPh(\roeAlgLocP{A}(X;\cliffAlg_n)^\Gamma) \to \KThGr_\KPh(\roeAlgLocP{B}(Y; \cliffAlg_n)^\Gamma)$.
Moreover, for a family of isometries $V_t$ which covers $f$, the isometry $V_1$ covers $f$ in the coarse sense.
Hence the induced maps on $\KOTh$-theory of the localization algebras are compatible with the induced maps on the $\KOTh$-theory of the Roe algebras.

\begin{rem}
   Let $H$ be an ample $(X,\Gamma)$-module.
   Using the ample $(X, \Gamma, \cliffAlg_n)$-module $\clnXmodule = H \tensGr \cliffAlg_n$, one can directly verify that $\roeAlgLoc(X; \cliffAlg_n)^\Gamma = \roeAlgLoc(X)^\Gamma \tensGr \cliffAlg_n$.
   This means that we have a canonical identification on the level of $\KOTh$-theory,
   \begin{equation*}
    \KOThGr_0(\roeAlgLoc(X; \cliffAlg_n)^\Gamma) = \KOThGr_0(\roeAlgLoc(X)^\Gamma \tensGr \cliffAlg_n) = \KOTh_n(\roeAlgLoc(X)^\Gamma),
   \end{equation*}
   where the latter identification is due to the Bott map from \cref{subsec:bott}.
\end{rem}

\begin{lem} \label{lem:idealsOfSubspace}
 Let $A \subseteq X$ be a closed $\Gamma$-invariant subspace.
 Let $V_t \colon H_A \to H_X$ be a family of isometries which covers the inclusion $A \hookrightarrow X$, where $H_Y$ is an ample $(Y, \Gamma)$-module for $Y \in \{X, A\}$.
 Then conjugation by $V_t$ determines $*$-homomorphisms as follows:
 \begin{align*}
    \Ad_{V_t} \colon \roeAlgLocZ(A)^\Gamma &\to \roeAlgLocZ(A \subset X)^\Gamma, \\
    \Ad_{V_t} \colon \roeAlgLoc(A)^\Gamma &\to \roeAlgLoc(A \subset X)^\Gamma, \\
    \Ad_{V_1} \colon \roeAlg(A)^\Gamma &\to \roeAlg(A \subset X)^\Gamma.
 \end{align*}
 All these $*$-homomorphisms induce isomorphisms on $\KOTh$-theory.
 
 Moreover, if $Z \subseteq A$ is another $\Gamma$-invariant closed subset, then $\Ad_{V_t} \colon \roeAlgLocP{Z}(A)^\Gamma \to \roeAlgLocP{Z}(A \subset X)^\Gamma$ induces an isomorphism on $\KTh$-theory.
\end{lem}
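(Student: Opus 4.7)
The plan is to verify well-definedness of the three homomorphisms and then to establish the $\KTh$-theory isomorphism for $\roeAlg$ and $\roeAlgLoc$ directly, from which the remaining cases follow by the five lemma.

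For well-definedness, since $V_t$ covers the inclusion $A \hookrightarrow X$, one has $\supp(V_t) \subseteq \{(y,x) \in X \times A : d(y,x) < \epsilon_t\}$ for some $\epsilon_t$ with $\epsilon_1 < \infty$ and $\epsilon_t \to 0$. Conjugation by $V_t$ sends a finite-propagation operator $T$ on $H_A$ of propagation $R$ to an operator of propagation at most $R + 2\epsilon_t$ supported in $\nbh_{\epsilon_t}(A) \times \nbh_{\epsilon_t}(A)$. Hence $\Ad_{V_1}(T) \in \roeAlg(A \subset X)^\Gamma$ whenever $T \in \roeAlg(A)^\Gamma$, and for $L \in \roeAlgLoc(A)^\Gamma$ the shrinking propagation of $L(t)$ combined with $\epsilon_t \to 0$ places $\Ad_{V_t}(L)$ in $\roeAlgLoc(A \subset X)^\Gamma$. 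The $\roeAlgLocZ$ and $\roeAlgLocP{Z}$ cases then follow by naturality with respect to $\ev_1$.

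For the $\KTh$-theory isomorphisms, the Roe algebra statement is a classical independence-of-ample-module result: $V_1 V_1^*$ cuts $\roeAlg(A \subset X)^\Gamma$ down to a full corner canonically isomorphic to $\roeAlg(A)^\Gamma$, and together with the complementary corner one recovers a Roe algebra on an ample module, so stability of $\KTh$-theory gives the $\KTh$-isomorphism (compare~\cite[Chapter 6]{higson-roe:analyticKHomology}). For the localization algebra, I would invoke Yu's local index map~\cite{yu:localizationAlgebrasAndCoarseBaumConnes} together with the module-independence arguments of Roe--Qiao~\cite{roe-qiao:onTheLocalizationAlgebraOfYu}: the propagation-shrinking condition concentrates operators onto $A$, yielding a canonical identification $\KTh_*(\roeAlgLoc(A \subset X)^\Gamma) \cong \KTh_*^\Gamma(A) \cong \KTh_*(\roeAlgLoc(A)^\Gamma)$ which is implemented precisely by $\Ad_{V_t}$.

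Finally, for the $\roeAlgLocZ$ case I would apply the five lemma to the naturality square of the short exact sequences
\begin{equation*}
   0 \to \roeAlgLocZ(A)^\Gamma \to \roeAlgLoc(A)^\Gamma \xrightarrow{\ev_1} \roeAlg(A)^\Gamma \to 0
\end{equation*}
and its $A \subset X$ analog, intertwined by $\Ad_{V_t}$ and $\Ad_{V_1}$. The $\roeAlgLocP{Z}$ case is analogous, using instead the sequences with right-hand terms $\roeAlg(Z \subset A)^\Gamma$ and $\roeAlg(Z \subset X)^\Gamma$. I expect the main obstacle to be the localization algebra step: establishing the identification $\KTh_*(\roeAlgLoc(A \subset X)^\Gamma) \cong \KTh_*^\Gamma(A)$ requires a careful Paschke-duality / Mayer--Vietoris argument in the style of Roe--Qiao to see that the ambient module $H_X$ contributes nothing beyond $V_t(H_A)$ to the $\KTh$-theory under the propagation-shrinking condition.
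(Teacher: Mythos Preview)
Your proposal is correct and follows essentially the same route as the paper: establish the Roe algebra case by classical ample-module independence, reduce the localization algebra case to Paschke duality via the Roe--Qiao machinery, and then deduce the $\roeAlgLocZ$ and $\roeAlgLocP{Z}$ cases by the Five Lemma on the evaluation sequences. The paper makes the localization step more explicit than you do: it passes through the structure-algebra quotients $\structureAlg(A)^\Gamma/\roeAlg(A)^\Gamma$ and $\structureAlg(A\subset X)^\Gamma/\roeAlg(A\subset X)^\Gamma$ using the Roe--Qiao isomorphisms $\KTh_{\KPh+1}(\structureAlgLoc/\roeAlgLoc)\cong\KTh_{\KPh}(\roeAlgLoc)$, so that the problem reduces to the known Paschke-duality statement that $\Ad_{V_1}$ on these quotients is a $\KTh$-isomorphism (\cite[Chapter~5]{higson-roe:analyticKHomology}, \cite[Proposition~3.8]{siegel:mayerVietorisAnalyticStructureGroup}); this is precisely the ``careful Paschke-duality argument'' you anticipated as the main obstacle. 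One small point to tighten in your $\roeAlgLocP{Z}$ step: the Five Lemma there also needs that $\Ad_{V_1}\colon\roeAlg(Z\subset A)^\Gamma\to\roeAlg(Z\subset X)^\Gamma$ is a $\KTh$-isomorphism, which follows by applying the Roe-algebra case to $Z$ twice (inside $A$ and inside $X$) and using independence of the covering isometry.
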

\begin{proof}
        This has been proved for the Roe algebra in~\cite{higson-roe-yu:coarseMVprinciple}.
        For the localization algebra $\roeAlgLoc(A)$, this has been directly established in~\cite{yu:localizationAlgebrasAndCoarseBaumConnes} in the special case of metric simplicial complexes.
        However, one can reduce the general case for $\roeAlgLoc(A)^\Gamma$ to the corresponding statement in (equivariant) $\KTh$-homology.
        Indeed, if we use very ample modules, it is shown in \cite{roe-qiao:onTheLocalizationAlgebraOfYu} that we have isomorphisms,
        \begin{equation*}
        \KOTh_{\KPh+1}\left(\structureAlg(A)^\Gamma/\roeAlg(A)^\Gamma\right) \overset{\iso}{\leftarrow} \KOTh_{\KPh+1}\left(\structureAlgLoc(A)^\Gamma / \roeAlgLoc(A)^\Gamma \right) \overset{\iso}{\to} \KOTh_{\KPh}\left( \roeAlgLoc(A)^\Gamma \right).
        \end{equation*}
        Similarly, we obtain isomorphisms,
                \begin{equation*}
                \begin{split}
        \KOTh_{\KPh+1}\left(\structureAlg(A \subset X)^\Gamma/\roeAlg(A\subset X)^\Gamma\right) \overset{\iso}{\leftarrow} \KOTh_{\KPh+1}\left(\structureAlgLoc(A\subset X)^\Gamma / \roeAlgLoc(A \subset X)^\Gamma \right) \\
        \overset{\iso}{\to} \KOTh_{\KPh}\left( \roeAlgLoc(A \subset X)^\Gamma \right),
        \end{split}
        \end{equation*}
        where the ideals $\structureAlg(A \subset X)^\Gamma$ and $\structureAlgLoc(A \subset X)^\Gamma$ are defined analogously as their counterparts in the setting of Roe algebras, however, with the additional condition that all operators should be locally compact on the complement of $A$.
        Furthermore, we observe that $\Ad_{V_t}$ intertwines these two sequences of isomorphisms, so it would suffice if
        \begin{equation*}
           \Ad_{V_1} \colon \structureAlg(A)^\Gamma / \roeAlg(A)^\Gamma  \to \structureAlg(A \subset X)^\Gamma/\roeAlg(A\subset X)^\Gamma,
        \end{equation*}
        induced isomorphisms on $\KOTh$-theory.
        This, however, is just the corresponding statement in the Paschke duality picture of $\KTh$-homology, which is proved in~\cite[Chapter 5]{higson-roe:analyticKHomology}, see also~\cite[Proposition 3.8]{siegel:mayerVietorisAnalyticStructureGroup}.
        
        Having established the isomorphisms for $\roeAlg(A)^\Gamma$ and $\roeAlgLoc(A)^\Gamma$, one can deduce from the Five Lemma that the map $\Ad_{V_t} \colon \roeAlgLocZ(A)^\Gamma \to \roeAlgLocZ(A \subset X)^\Gamma$ must also be an isomorphism on $\KOTh$-theory.
        Finally, the last statement follows similarly from a Five Lemma argument by considering the short exact sequence
        \begin{equation*}
        0 \to \roeAlgLocZ(A \subset X)^\Gamma \to \roeAlgLocP{Z}(A \subset X)^\Gamma \to \roeAlg(Z \subset X)^\Gamma \to 0.\qedhere
        \end{equation*}
\end{proof}

\subsection{External product} \label{subsec:externalProductLocAlgebras}
Let $X_i$ be proper metric spaces endowed with proper and free $\Gamma_i$-actions, $i=1,2$.
Suppose that $\clnXmodule_i$ is a $\Gamma_i$-equivariant $\cliffAlg_{n_i}$-linear ample $X_i$-module, $i=1,2$.
Then $\clnXmodule := \clnXmodule_1 \tensGr \clnXmodule_2$ is an ample $\Gamma$-equivariant $\cliffAlg_n$-linear $X$-module for $\Gamma := \Gamma_1 \times \Gamma_2$ and $n := n_1 + n_2$.
Then there is a canonical $*$-homomorphism
\begin{gather*}
\tensMap \colon \roeAlgLoc(X_1; \cliffAlg_{n_1})^{\Gamma_1} \tensGr \roeAlgLoc(X_2; \cliffAlg_{n_2})^{\Gamma_2} \to \roeAlgLoc(X_1 \times X_2; \cliffAlg_{n})^{\Gamma_1 \times \Gamma_2}, \\
 \tensMap(L_1 \tensGr L_2)(t) = L_1(t) \tensGr L_2(t), \quad L_i \in \roeAlgLoc(X_i; \cliffAlg_{n_i})^{\Gamma_{i}}, t \in [1,\infty].
\end{gather*}
If $Z_1 \subset X_1$ is a $\Gamma_1$-invariant closed subset, then $\tensMap$ restricts to a map 
\begin{equation*}
  \tensMap \colon \roeAlgLocP{Z_1}(X_1; \cliffAlg_{n_1})^{\Gamma_1} \tensGr \roeAlgLoc(X_2; \cliffAlg_{n_2})^{\Gamma_2} \to \roeAlgLocP{Z_1 \times X_2}(X_1 \times X_2; \cliffAlg_n)^{\Gamma_1 \times \Gamma_2}.
\end{equation*}
Combining this with the external product in $\KTh$-theory, we obtain the following external product for the $\KTh$-theory of localization algebras,
\begin{equation*}
  \KTh_{n_1}\left(\roeAlgLocP{Z_1}(X_1)^{\Gamma_1} \right) \tens \KTh_{n_2}\left(\roeAlgLoc(X_2)^{\Gamma_2}\right) \overset{\secProd}{\to} \KTh_{n_1 + n_2}\left(\roeAlgLocP{Z_1 \times X_2}(X_1 \times X_2)^{\Gamma_1 \times \Gamma_2} \right),
\end{equation*}
defined as $x \secProd y := \tensMap_\KPh(x \KThProd y)$ for $x \in \KThGr_0(\roeAlgLocP{Z_1}(X_1; \cliffAlg_{n_1})^{\Gamma_1})$, $y \in \KThGr_0(\roeAlgLoc(X_2; \cliffAlg_{n_2})^{\Gamma_2})$.

The external product is functorial, that is, $(f_1 \times f_2)_\KPh(x \secProd y) = (f_1)_\KPh(x) \secProd (f_2)_\KPh(y)$ for ucc maps $f_i \colon X_i \to Y_i$,  $i=1,2$.

Restricting the construction to $t = 1$ yields $\tensMap_1 \colon \roeAlg(X_1; \cliffAlg_{n_1})^{\Gamma_1} \tens \roeAlg(X_2; \cliffAlg_{n_2})^{\Gamma_2} \to \roeAlg(X_1 \times X_2; \cliffAlg_{n_1 + n_2})^{\Gamma_1 \times \Gamma_2}$ and we obtain an external product for Roe algebras,
\begin{equation*}
 \KTh_{n_1}(\roeAlg(X_1)^{\Gamma_1}) \tens \KTh_{n_2}(\roeAlg(X_2)^{\Gamma_2}) \overset{\secProd}{\to} \KTh_{n_1 + n_2}(\roeAlg(X_1 \times X_2)^{\Gamma_1 \times \Gamma_2}).
\end{equation*}

\section{Local index classes, secondary invariants and product formulas} \label{sec:localIndexProductFormulas}
In this section, we combine the $\cliffAlg_n$-linear localization algebras with the picture of $\KTh$-theory from~\cref{sec:gradedKtheory} in order to construct the (equivariant) local index classes associated to the spinor Dirac operators on complete spin manifolds.
Indeed, an element of $\KThGr_0(\roeAlgLoc(X; \cliffAlg_n))$ may be defined by a $\ast$-homomorphism $\contZGr \to \roeAlgLoc(X; \cliffAlg_n)$.
Our definition of the local index class is essentially the $\ast$-homomorphism given by the functional calculus of the $\cliffAlg_n$-linear spinor Dirac operator, see~\labelcref{eq:defLocalIndexClass} below.
A slight modification of this idea allows to define the (partial) secondary invariants in almost the same fashion, see~\cref{subsec:partialIndex}.

\subsection{The local index class} \label{subsec:localIndexClasses}

Let $X$ be a spin manifold together with a fixed complete Riemannian metric $g$.
Suppose that $X$ is equipped with a free and proper action of a discrete group $\Gamma$ by spin structure preserving isometries. 
Let $\clnDiracOp$ the $\cliffAlg_n$-linear Dirac operator acting on the $\cliffAlg_n$-spinor bundle $\clnSpinorBdl(X)$. 
That is, $\clnSpinorBdl(X) = \mathrm{P}_{\Spin(n)}(X) \times_l \cliffAlg_n$, where $\mathrm{P}_{\Spin(n)}(X)$ is the principal $\Spin(n)$-bundle of $X$ and $l$ is the representation of $\Spin(n)$ by left multiplication on $\cliffAlg_n$.
Right multiplication induces an action of $\cliffAlg_n$ on $\clnSpinorBdl(X)$.
This turns sections of $\clnSpinorBdl(X)$ into a right $\cliffAlg_n$-module and the space of $\Lp^2$-sections $\Lp^2(\clnSpinorBdl(X))$ into a Hilbert $\cliffAlg_n$-module.
Moreover, $\Lp^2(\clnSpinorBdl(X))$ is equipped with a unitary $\Gamma$-action such that it is a $(X, \Gamma, \cliffAlg_n)$-module.
Using a measurable trivialization of $\clnSpinorBdl(X)$ one can show that $\Lp^2(\clnSpinorBdl(X))$ is isomorphic to $\Lp^2(X) \tensGr \cliffAlg_n$, hence it is ample.
In the following, we consider the localization algebra to be formed on $\Lp^2(\clnSpinorBdl(X))$.

Consider the $*$-homomorphism,
\begin{equation}
 \varphi_\clnDiracOp \colon \contZGr \to \roeAlgLoc(X; \cliffAlg_n)^{\Gamma}, \quad \varphi_\clnDiracOp(f)(t) = f\left( \frac{1}{t} \clnDiracOp \right) \label{eq:defLocalIndexClass}
\end{equation}
By \cite[Proposition 10.5.2]{higson-roe:analyticKHomology}, $f( \frac{1}{t} \clnDiracOp)$ is locally compact for each $t$.
Moreover, if $f$ is a function with compactly supported Fourier transform, then the support of the Fourier transform of $x \mapsto f(t^{-1}x)$ becomes more and more concentrated at $0$ as $t$ grows.
Thus it follows from unit propagation speed of Dirac wave operators and the Fourier inversion formula (see~\cite[Chapter 10.3]{higson-roe:analyticKHomology}) that the propagation of $f( \frac{1}{t} \clnDiracOp )$ goes to zero as $t \to \infty$.
By an approximation argument all of this implies that $\varphi_\clnDiracOp(f) \in \roeAlgLoc(X; \cliffAlg_n)^\Gamma$ for all $f \in \contZGr$.
Since $\clnDiracOp$ is odd, $\varphi_{\clnDiracOp}$ is a \emph{graded} $\ast$-homomorphism.

\begin{defi} \label{defi:localIndexClass}
 The \emph{equivariant local index class} of the Dirac operator $\clnDiracOp$ is defined to be the $\KOTh$-theory class represented by the $\ast$-homomorphism $\varphi_{\clnDiracOp}$ above,
 \begin{equation*}
 \IndL^\Gamma(\clnDiracOp) = [\varphi_\clnDiracOp] \in \KOThGr_0\left(\roeAlgLoc(X; \cliffAlg_n)^{\Gamma} \right) = \KOTh_n\left(\roeAlgLoc(X)^\Gamma\right).
\end{equation*}
\end{defi}

\begin{rem}[{\cite[Section 4]{roe-qiao:onTheLocalizationAlgebraOfYu}, \cite[Section 3]{dumitrascu:asymptoticMorphismKHomology}}] \label{rem:relationToETheoryFundamentalClass}
  There is an asymptotic morphism $\gamma \colon \roeAlgLoc(X)^\Gamma \tensGr \contZ(X) \asympMorphism \cptOps$, $\gamma_t(L \tensGr f) = L(t) f$.
  Using a suitable product in $\ETh$-theory, this induces a map $\gamma_* \colon \KOTh_\KPh( \roeAlgLoc(X)^\Gamma) \to \KOTh^{-\KPh}_\Gamma(\contZ(X)) = \KOTh_\KPh^\Gamma(X)$.
  The class $\gamma_*(\IndL(\clnDiracOp))$ is explicitly represented by the following $\Gamma$-equivariant asymptotic morphism $\alpha \colon \contZGr \tensGr \contZ(X) \asympMorphism \cptOps(\Lp^2(\clnSpinorBdl)) \iso \cptOps \tensGr \cliffAlg_n$, $\alpha_t(f \tensGr g) = f( \frac{1}{t} \clnDiracOp) g$.
  Specializing to $X = \R^n$, we observe that $\gamma_*(\IndL(\clnDiracOp_{\R^n}))$ is precisely the Dirac element from \cref{subsec:bott}.
\end{rem}

\subsection{Partial secondary invariants and localized indices} \label{subsec:partialIndex}
We now assume, in addition to the previous setup, that the Riemannian metric $g$ on $X$ has uniformly positive scalar curvature \emph{outside} a $\Gamma$-invariant closed subset $Z \subset X$.
\begin{defi}
  We say a Riemannian metric $g$ on $X$ has \emph{uniformly positive scalar curvature (\enquote{upsc}) outside $Z \subset X$} if there is $\varepsilon > 0$ such that $\scalCurv_g(x) > \varepsilon$ for all $x \in X \setminus Z$.
\end{defi}
We will use the following lemma, proofs of which can be found in~\cite{Roe15:Partial,HPS14Codimension}.
\begin{lem} \label{lem:roesLemmaLocalizedIndex}
Let $\varepsilon >0$ such that the scalar curvature function $\scalCurv_g$ is uniformly bounded below by $4 \varepsilon^2$ on $X \setminus Z$.
 Then $(\ev_1 \circ \varphi_{\clnDiracOp})(f) = f(\clnDiracOp) \in \roeAlg(Z \subset X; \cliffAlg_n)^\Gamma$ for all $f \in \contZGr(-\varepsilon, \varepsilon)$.
\end{lem}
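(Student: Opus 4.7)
The strategy is to combine the Schr\"odinger--Lichnerowicz identity with unit propagation speed of Dirac wave operators, exhibiting $f(\clnDiracOp)$ as a norm limit of locally compact operators of finite propagation whose Schwartz kernels are supported in neighborhoods of $Z \times Z$.

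First I would reduce to the case where $f$ is smooth with compact support contained in a strictly smaller interval $[-\varepsilon', \varepsilon'] \subset (-\varepsilon, \varepsilon)$. This is justified by the norm density of such functions in $\contZGr(-\varepsilon,\varepsilon)$, together with continuity of the functional calculus $f \mapsto f(\clnDiracOp)$ for the supremum norm and the fact that $\roeAlg(Z \subset X; \cliffAlg_n)^\Gamma$ is norm-closed. For such $f$, the Fourier transform $\fourierTf{f}$ is a Schwartz function. Choosing a smooth cutoff $\eta$ equal to $1$ on $[-1,1]$ and compactly supported, and setting $\fourierTf{f_R}(s) := \fourierTf{f}(s)\eta(s/R)$, one has $f_R \to f$ uniformly as $R\to\infty$, so $f_R(\clnDiracOp) \to f(\clnDiracOp)$ in operator norm. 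By the Fourier inversion formula
\[ f_R(\clnDiracOp) = \tfrac{1}{2\pi} \int \fourierTf{f_R}(s)\, \eu^{\iu s \clnDiracOp}\,\D s \]
and unit propagation speed of the wave operators $\eu^{\iu s \clnDiracOp}$ (this is exactly the argument already used for $\varphi_{\clnDiracOp}(f) \in \roeAlgLoc(X;\cliffAlg_n)^\Gamma$ in \cref{subsec:localIndexClasses}), each $f_R(\clnDiracOp)$ has propagation at most $R$.

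The remaining content is to upgrade \enquote{finite propagation} to \enquote{support near $Z$}. Here I would use the Schr\"odinger--Lichnerowicz identity $\clnDiracOp^2 = \nabla^\ast \nabla + \scalCurv_g/4$ together with the hypothesis $\scalCurv_g \geq 4\varepsilon^2$ on $X \setminus Z$, which yields the pointwise gap $\|\clnDiracOp \sigma\|^2 \geq \varepsilon^2 \|\sigma\|^2$ for every smooth compactly supported section $\sigma$ whose support avoids $Z$. Exploiting the strict inequality $\varepsilon > \varepsilon'$, an Agmon-type argument then shows that for a smooth cutoff $\chi_S$ equal to $1$ on $\nbh_S(Z)$ and supported in $\nbh_{S+1}(Z)$ one has
\[ \|(1-\chi_S) f(\clnDiracOp)\| + \|f(\clnDiracOp)(1-\chi_S)\| \xrightarrow{S \to \infty} 0. \]
Consequently $f(\clnDiracOp) = \lim_{S \to \infty} \chi_S f(\clnDiracOp) \chi_S$ in norm; each $\chi_S f(\clnDiracOp) \chi_S$ is locally compact with support contained in $\nbh_{S+1}(Z) \times \nbh_{S+1}(Z)$ and can be further approximated by operators of finite propagation using the previous paragraph, so it lies in $\roeAlg(Z \subset X; \cliffAlg_n)^\Gamma$.

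The substantive step where I expect resistance is the Agmon-type norm estimate: the pointwise Lichnerowicz gap does not directly bound $\|(1-\chi_S) f(\clnDiracOp)\|$, because sections in the spectral subspace of $\clnDiracOp$ associated with $[-\varepsilon',\varepsilon']$ need not be supported near $Z$. The typical way to make this quantitative is to balance the Fourier-cutoff length $R$ against the distance $S$ and to absorb the commutators $[\clnDiracOp,\chi_S] = c(\D\chi_S)$ perturbatively, choosing $\chi_S$ to vary slowly so that $\|\D\chi_S\|_\infty$ is small compared to $\varepsilon - \varepsilon'$. This is precisely the estimate carried out (in slightly different guises) by Roe~\cite{Roe15:Partial} and by Hanke--Pape--Schick~\cite{HPS14Codimension}, which I would invoke for the technical details.
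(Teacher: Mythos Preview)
The paper does not prove this lemma; it simply states that proofs can be found in \cite{Roe15:Partial,HPS14Codimension}. Your outline is exactly the argument carried out in those references (Fourier truncation plus unit propagation speed, combined with the Schr\"odinger--Lichnerowicz gap and the commutator estimate $[\clnDiracOp,\chi_S] = c(\D\chi_S)$ with slowly varying cutoffs), so your proposal is correct and aligned with what the paper cites.
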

In other words, the restriction of $\varphi_{\clnDiracOp}$ to $\contZGr(-\varepsilon, \varepsilon)$ takes values in $\roeAlgLocP{Z}(X; \cliffAlg_n)^\Gamma$.
Since the inclusion of $\contZGr(-\varepsilon, \varepsilon)$ into $\contZGr$ is a homotopy equivalence, this means that the local index class $\IndL(\clnDiracOp) \in \KTh_n(\roeAlgLoc(X)^\Gamma)$ can be lifted to an element of $\KTh_n(\roeAlgLocP{Z}(X)^\Gamma)$.
To make this precise, we fix a graded $*$-homomorphism $\psi \colon \contZGr \to \contZGr(-\varepsilon, \varepsilon)$ which is homotopic to the identity when composed with the inclusion $\contZGr(-\varepsilon, \varepsilon) \hookrightarrow \contZGr$, see~\cref{lem:elementaryHomotopyEqu}.

\begin{defi} \label{defi:LocalizedLocalIndexClass}
 The \emph{partial secondary local index class} of the Dirac operator $\clnDiracOp$  is defined as follows,
 \begin{equation*}
 \IndLP{Z}^\Gamma (\clnDiracOp^g) := [\varphi_{\clnDiracOp} \circ \psi] \in \KOThGr_0 \left( \roeAlgLocP{Z}(X; \cliffAlg_n)^\Gamma \right) = \KOTh_n \left( \roeAlgLocP{Z}(X)^\Gamma \right),
\end{equation*}
 where $\psi \colon \contZGr \to \contZGr(-\varepsilon, \varepsilon)$ and $\varepsilon > 0$ are chosen as above.
\end{defi}
Here we have included $g$ in the notation of the partial secondary invariant to emphasize that this class depends on the metric.

 By \cref{lem:elementaryHomotopyEqu}, such a $\psi$ exists and is unique up to homotopy (it is just a homotopy inverse to the inclusion $\contZGr(-\varepsilon, \varepsilon) \hookrightarrow \contZGr$, hence $\IndLP{Z}^\Gamma (\clnDiracOp^g)$ is well-defined independently of the choice of appropriate $\varepsilon$ and $\psi$.
 
The partial secondary local index $\IndLP{Z}^\Gamma (\clnDiracOp^g)$ maps to the local index $\IndL^\Gamma(\clnDiracOp)$ under the map induced by the inclusion $\roeAlgLocP{Z}(X)^\Gamma \hookrightarrow \roeAlgLoc(X)^\Gamma$ because $\psi$ is homotopic to the identity on $\contZGr$.
In particular, for $Z = X$, we recover the local index class as defined in previous subsection.

The image of $\IndLP{Z}^\Gamma(\clnDiracOp^g)$ under $(\ev_1)_\KPh \colon \KTh_n(\roeAlgLocP{Z}(X)^\Gamma) \to \KTh_n(\roeAlg(Z \subset X)^\Gamma)$, denote it by $\Ind_Z^\Gamma(\clnDiracOp^g)$, is (an equivariant version of) the \emph{localized coarse index} of Roe, see~\cite[Proposition 3.11]{roe:indexTheoryCoarseGeometryTopologyOfManifolds} and \cite{Roe15:Partial}.
\begin{rem}
  If the $\Gamma$-action on $X$ is cocompact, then any non-empty $\Gamma$-invariant subset $Z$ is coarsely equivalent to $X$ and so the algebras $\roeAlgLocP{Z}(X)^\Gamma$ are all the same for non-empty $Z$.
  Moreover, note that it is a consequence of the Kazdan--Warner theorem~\cite{KW75Scalar} that on a closed manifold (such as $X/\Gamma$ if the action is cocompact) of dimension $\geq 3$ there are no restrictions against positive scalar curvature outside a subset of non-empty interior.
  We conclude that partial secondary local indices for subsets other than $Z = X$ or $Z = \emptyset$ are only interesting if $X$ is not $\Gamma$-cocompact.
\end{rem}
The partial secondary local index class can be used to distinguish metrics up to concordance relative to $Z$ in the following sense.

Given a Riemannian metric $g$ on a smooth manifold $X$, we denote the induced distance function on $X$ by $d_g \colon X \times X \to \Rgeq$.
We say two Riemannian metrics $g_0$ and $g_1$ are \emph{uniformly equivalent} if the identity maps $\id \colon (X, d_{g_0}) \to (X, d_{g_1})$ and $\id \colon (X, d_{g_1}) \to (X, d_{g_0})$ are uniformly continuous.
Observe that uniformly equivalent Riemannian metrics are also coarsely equivalent since the distance function is a length metric.

\begin{defi}\label{defi:concordanceRelZ}
 Let $g_0$, $g_1$ be two $\Gamma$-invariant Riemannian metrics on $X$ which have upsc outside $Z \subset X$.
 Suppose that $g_0$ and $g_1$ are uniformly equivalent.
 We say $g_0$ and $g_1$ are \emph{concordant relative to $Z$} if there exists a metric $h$ on $\R \times X$ such that
 \begin{enumerate}[(i)]
 \item the identity map $\id \colon (\R \times X, d_h) \to (\R \times X, d_{\D{t}^2 \oplus g_0})  $ is uniformly continuous,
 \item $h$ has upsc outside $\R \times Z$,
 \item $h$ restricts to $\D{t}^2 \oplus g_1$ on $[1,\infty) \times X$ and to $\D{t}^2 \oplus g_0$ on $(-\infty,0] \times X$.
 \end{enumerate}
\end{defi}
If $Z = \emptyset$, we just say that $g_0$ and $g_1$ are concordant.
\begin{rem}\label{rem:equalRelZ}
 Using a convex interpolation one can show that if $g_0$ and $g_1$ are \emph{equal} outside $Z$, then they are concordant relative to $Z$.
\end{rem}
\begin{rem}
  We restrict ourselves to comparing uniformly equivalent Riemannian metrics so as to ensure that the Roe and localization algebras do not depend on which metric we use to define them.
  Moreover, completeness is preserved under passing to a uniformly equivalent metric.
  Note that two Riemannian metrics are automatically uniformly equivalent if both are invariant with respect to a proper and cocompact group action. 
\end{rem}

\begin{prop}\label{thm:concordanceInvariance}
 Let $g_0$, $g_1$ be concordant relative to $Z$ as in \cref{defi:concordanceRelZ}.
 Then their partial secondary local index classes agree, $\IndLP{Z}^\Gamma(\clnDiracOp^{g_0}) = \IndLP{Z}^\Gamma(\clnDiracOp^{g_1})$.
\end{prop}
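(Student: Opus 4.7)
The strategy is to lift both sides into a common partial secondary local index class on the cylinder $W := \R \times X$ equipped with the concordance metric $h$, then factor out a fundamental class of $\R$. Since $h$ is uniformly equivalent to $\D{t}^2 \oplus g_0$ (so that the Roe and localization algebras of $(W,h)$ are unambiguous) and $h$ has upsc outside $\R \times Z$, the construction in \cref{defi:LocalizedLocalIndexClass} applied to the $\cliffAlg_{n+1}$-linear Dirac operator $\clnDiracOp^h_W$ yields a well-defined class
\[
  \alpha := \IndLP{\R \times Z}^\Gamma(\clnDiracOp^h_W) \in \KTh_{n+1}\bigl(\roeAlgLocP{\R \times Z}(W)^\Gamma\bigr).
\]

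On the two product ends $W_- := (-\infty,0] \times X$ and $W_+ := [1,\infty) \times X$ the operator $\clnDiracOp^h_W$ factors as a Clifford-graded external sum of $\clnDiracOp_\R$ and $\clnDiracOp^{g_i}_X$ (for $i=0$ on $W_-$ and $i=1$ on $W_+$). Via the coproduct formula~\labelcref{eq:comultOnGen}, this exhibits the restriction of $\varphi_{\clnDiracOp^h_W}$ to each end as homotopic to $\tensMap \circ (\varphi_{\clnDiracOp_\R} \tensGr \varphi_{\clnDiracOp^{g_i}_X}) \circ \comult$; combined with the ideal-inclusion isomorphism of \cref{lem:idealsOfSubspace}, one concludes that the restriction of $\alpha$ to $W_i$ equals the image of $\IndL(\clnDiracOp_\R) \secProd \IndLP{Z}^\Gamma(\clnDiracOp^{g_i})$ in $\KTh_{n+1}(\roeAlgLocP{\R \times Z}(W)^\Gamma)$.

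The two end-inclusions $W_- \hookrightarrow W$ and $W_+ \hookrightarrow W$ differ only by a bounded translation along the $\R$-factor and are therefore coarsely close, so they induce the same map on $\KTh$-theory. Because $\alpha$ is a single global class, both end-restrictions give the same element, forcing
\[
  \IndL(\clnDiracOp_\R) \secProd \IndLP{Z}^\Gamma(\clnDiracOp^{g_0}) = \IndL(\clnDiracOp_\R) \secProd \IndLP{Z}^\Gamma(\clnDiracOp^{g_1}).
\]
The desired equality $\IndLP{Z}^\Gamma(\clnDiracOp^{g_0}) = \IndLP{Z}^\Gamma(\clnDiracOp^{g_1})$ will then follow from the injectivity of external multiplication by $\IndL(\clnDiracOp_\R)$, which holds since $\R$ is hypereuclidean and Bott periodicity (\cref{subsec:bott}) realises this operation as an isomorphism up to the standard identifications.

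The hard part is making the end-identifications compatible with the partial-support ideal structure, specifically verifying that the product decomposition of the functional calculus of $\clnDiracOp^h_W$ genuinely takes values in the correct partial ideal $\roeAlgLocP{\R \times Z}(W_i \subset W)^\Gamma$ and not merely in the ambient $\roeAlgLoc(W)^\Gamma$. A cleaner alternative that sidesteps these bookkeeping issues is to construct the homotopy of $\ast$-homomorphisms between $\varphi_{\clnDiracOp^{g_0}} \circ \psi$ and $\varphi_{\clnDiracOp^{g_1}} \circ \psi$ directly, by sliding a bump function along the $\R$-factor in $W$ and using \cref{lem:roesLemmaLocalizedIndex} to keep all supports concentrated near $\R \times Z$ throughout the interpolation.
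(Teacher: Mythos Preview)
Your overall architecture mirrors the paper's: build $\alpha = \IndLP{\R \times Z}^\Gamma(\clnDiracOp^h_W)$ on the cylinder, identify it with both $\IndL(\clnDiracOp_\R) \secProd \IndLP{Z}^\Gamma(\clnDiracOp^{g_i})$, and cancel the $\R$-factor via the suspension isomorphism. The gap is the middle identification. Your ``restriction of $\alpha$ to $W_i$'' is not a well-defined $\KTh$-theoretic operation: functoriality for localization algebras runs along ucc maps, and there is no map $\KTh_{n+1}(\roeAlgLocP{\R \times Z}(W)^\Gamma) \to \KTh_{n+1}(\roeAlgLocP{(-\infty,0]\times Z}(W_-)^\Gamma)$ to appeal to. The propagation-speed fact that $\varphi_{\clnDiracOp^h_W}(f)$ and $\varphi_{\clnDiracOp^{\D{t}^2\oplus g_0}_W}(f)$ differ by something supported near $W_+$ does \emph{not} let you conclude they represent the same class in $\KTh_{n+1}(\roeAlgLocP{\R \times Z}(W)^\Gamma)$, because the relevant ideal $\roeAlgLocP{(Z\cap W_+)\times\R}(W_+ \subset W)^\Gamma$ does not have vanishing $\KTh$-theory. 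Indeed, asserting $\alpha$ equals the product class on $W$ is itself an instance of the proposition you are proving (for the pair $h$, $\D{t}^2\oplus g_0$ relative to $\R\times Z$), so the argument as written is circular.

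The paper resolves this by replacing ``restriction to an end'' with the Mayer--Vietoris boundary map $\bdMV \colon \KTh_{n+1}(\roeAlgLocP{\R \times Z}(W)^\Gamma) \to \KTh_n(\roeAlgLocP{Z}(X)^\Gamma)$, and applying the partitioned manifold theorem (\cref{thm:partitionedManifold}) to get $\bdMV(\alpha) = \IndLP{Z}^\Gamma(\clnDiracOp^{g_0}_X)$ when partitioning at $\{-1\}\times X$, and $= \IndLP{Z}^\Gamma(\clnDiracOp^{g_1}_X)$ when partitioning at $\{2\}\times X$; then \cref{thm:generalBoundaryOfDiracIsDirac} inverts $\bdMV$. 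The substantive input is the swapping lemma (\cref{thm:swappingLemma}), which is the rigorous form of your ``the two halves agree'' intuition: it passes to the quotient $\roeAlgLocP{Z}(W)^\Gamma / \roeAlgLocP{Z\cap W_-}(W_- \subset W)^\Gamma$, where the propagation-speed argument genuinely yields equality of $\ast$-homomorphisms and hence of $\KTh$-classes. Your sliding-bump alternative would need precisely this quotient-algebra framework (and the injectivity statement \cref{thm:quotientIsoPM}\labelcref{item:exciseOneHalfPartialInj}) to become a proof.
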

We defer the proof of \cref{thm:concordanceInvariance} to \cref{sec:partitionedManifold}, where it will be a consequence of the partitioned manifold index theorem for partial secondary invariants.

\subsection{The \texorpdfstring{$\rho$}{rho}-invariant of a positive scalar curvature metric} \label{subsec:rhoInvariant}
If the Riemannian metric $g$ has uniformly positive scalar curvature on \emph{all of $X$}, then by the previous subsection, we obtain a lift of the local index class to an element $\IndLP{\emptyset}^\Gamma(\clnDiracOp^g) \in \KTh_n(\roeAlgLocP{\emptyset}(X)^\Gamma)$.
Of course, $\roeAlgLocP{\emptyset}(X)^\Gamma = \roeAlgLocZ(X)^\Gamma$, and we take this element to be the $\rho$-invariant associated to the metric $g$.
\begin{defi} \label{defi:secondaryInvariant}
The equivariant \emph{$\rho$-invariant} of the upsc metric $g$ is
 \begin{equation*}
  \secInv^\Gamma(g) := \IndLP{\emptyset}^\Gamma(\clnDiracOp^g) = [\varphi_{\clnDiracOp} \circ \psi] \in \KOThGr_0 \left( \roeAlgLocZ(X; \cliffAlg_n)^\Gamma \right) = \KOTh_n \left( \roeAlgLocZ(X)^\Gamma \right)
 \end{equation*}
 for appropriate $\psi \colon \contZGr \to \contZGr(-\varepsilon, \varepsilon)$ and $\varepsilon > 0$ as in \cref{defi:LocalizedLocalIndexClass}.
\end{defi}
  
  All we need to define such a secondary invariant in $\KTh_n(\roeAlgLocZ(X))$ is that the spectrum of $\clnDiracOp$ does not contain zero.
  
\subsection{The relative index of two positive scalar curvature metrics} \label{subsec:twoPSCrelIndex}
  Let $g_0, g_1$ be two uniformly equivalent complete $\Gamma$-invariant metrics of upsc on $X$.
  Then one can construct a \emph{higher relative index} $\IndRel^\Gamma(g_0, g_1) \in \KTh_{n+1}(\roeAlg(X)^\Gamma)$, see~\cite{XY14Relative}.
  
  By \cref{rem:equalRelZ}, $g_0$ and $g_1$ are concordant relative to $X$.
  Let $h$ be a metric on $W = \R \times X$ which witnesses this (see~\cref{defi:concordanceRelZ}).
  In particular, $h$ restricts to $\D{t}^2 \oplus g_1$ on $[1,\infty) \times X$ and to $\D{t}^2 \oplus g_0$ on $(-\infty,0] \times X$.
  Thus it has upsc outside $Z = [0,1] \times X$.
  Consequently, by \cref{subsec:partialIndex}, we obtain a partial secondary local index $\IndLP{[0,1] \times X}^\Gamma(\clnDiracOp_W^h) \in \KTh_{n+1}(\roeAlgLocP{[0,1] \times X}(W)^\Gamma)$ and a localized coarse index $\Ind^\Gamma_{[0,1] \times X}(\clnDiracOp_W^h) \in \KTh_{n+1}(\roeAlg([0,1] \times X \subset W)^\Gamma)$.
  By \cref{rem:equalRelZ,thm:concordanceInvariance}, these classes are independent of the particular choice of $h$.
  Since the canonical projection $\proj_X \colon [0,1] \times X \to X$ is a coarse equivalence, there is an isomorphism
  \begin{equation*}
    \KTh_{n+1}\left(\roeAlg([0,1] \times X \subset W )^\Gamma \right) \iso \KTh_{n+1}\left( \roeAlg([0,1] \times X)^\Gamma \right) \overset{(\proj_X)_*}{\to} \KTh_{n+1}\left( \roeAlg(X)^\Gamma \right).
  \end{equation*}
  After these preparations, we are ready to define the relative index.
  \begin{defi} \label{defi:relIndex}
    The equivariant \emph{relative index} associated to the pair of metrics~$g_0$,~$g_1$ is:
    \begin{equation*}
      \IndRel^\Gamma(g_0, g_1) := (\proj_X)_\KPh \Ind^\Gamma_{[0,1] \times X} (\clnDiracOp_W^h) \in \KTh_{n+1}(\roeAlg(X)^\Gamma).
    \end{equation*}
  \end{defi}
  If $g_0$ and $g_1$ are concordant, then by construction $\IndRel^\Gamma(g_0, g_1) = 0$.
  \begin{rem}
    If $g_0$ and $g_1$ have upsc only outside some $Z$ as in \cref{subsec:partialIndex}, then it is possible to define a relative index $\Ind_{\mathrm{rel}, Z}^\Gamma(g_0, g_1) \in \KTh_{n+1}(\roeAlg(X)^\Gamma / \roeAlg(Z \subset X)^\Gamma)$ but we will not pursue this in this paper.
  \end{rem}

\subsection{Product formulas} \label{subsec:productFormulas}
Suppose that $X_i$, $i \in \{1,2\}$, are $n_i$-dimensional spin manifolds endowed with complete Riemannian metrics $g_i$ and free and proper $\Gamma_i$-actions. 
Consider their product $X = X_1 \times X_2$ which is an $n := n_1 + n_2$-dimensional spin manifold.
A principal $\Spin(n)$-bundle covering the $\SO(n)$-frame bundle of $X$ may be obtained as the bundle associated to $\principalBdl_{\Spin(n_1)}(X_1) \times_{\Z_2} \principalBdl_{\Spin(n_2)}(X_2)$ via the inclusion $\Spin(n_1) \times_{\Z_2} \Spin(n_2) \hookrightarrow \Spin(n)$.
In view of the isomorphism $\cliffAlg_{n_1} \tensGr \cliffAlg_{n_2} \iso \cliffAlg_{n}$, this implies that we may identify the $\cliffAlg$-spinor bundles as follows: $\clnSpinorBdl(X) = \proj_1^* \clnSpinorBdl(X_1) \tensGr \proj_2^* \clnSpinorBdl(X_2)$, where $\proj_i \colon X_1 \times X_2 \to X_i$ are the canonical projection maps.
On the level of $\Lp^2$-sections, this means that we have an identification $\Lp^2(\clnSpinorBdl(X_1)) \tensGr \Lp^2(\clnSpinorBdl(X_2)) = \Lp^2(\clnSpinorBdl(X))$.
Hence we can use the description of the external product from \cref{subsec:externalProductLocAlgebras} in this context.
The $\cliffAlg$-linear Dirac operators $\clnDiracOp_{X_i}$ and $\clnDiracOp_X$ on $\clnSpinorBdl(X_i)$ and $\clnSpinorBdl(X)$, $i \in \{1,2\}$, respectively, satisfy the relation $\clnDiracOp_X = \clnDiracOp_{X_1} \tensGr \id + \id \tensGr \clnDiracOp_{X_2}$.

\thmProductFormula

Moreover, setting $Z_1 = X_1$ and $Z_1 = \emptyset$, we immediately deduce:
\begin{cor} \label{cor:productFormula}
  In particular,
 \begin{equation}
  \IndL^{\Gamma_1 \times \Gamma_2}(\clnDiracOp_X) = \IndL^{\Gamma_1}(\clnDiracOp_{X_1}) \secProd \IndL^{\Gamma_2}(\clnDiracOp_{X_2}). \label{eq:primaryProductFormula}
 \end{equation}
 If both of the Riemannian metrics $g_{X_1}$ on $X_1$ and $g_X = g_{X_1} \oplus g_{X_2}$ on $X_1 \times X_2$ have upsc, then
  \begin{equation}
  \secInv^{\Gamma_1 \times \Gamma_2}(g_X) = \secInv^{\Gamma_1}(g_{X_1}) \secProd \IndL^{\Gamma_2}(\clnDiracOp_{X_2}). \label{eq:secondaryProductFormula}
 \end{equation}
\end{cor}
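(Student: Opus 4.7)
The proof exploits the algebraic compatibility of functional calculus with the comultiplication $\comult$. Since $\clnDiracOp_X = \clnDiracOp_{X_1} \tensGr \id + \id \tensGr \clnDiracOp_{X_2}$ and both summands are odd, they anticommute in the graded sense. By the very definition $\comult(f) = f(\mathrm{x} \tensGr 1 + 1 \tensGr \mathrm{x})$ via functional calculus, one obtains, for each $t \in [1,\infty)$ and $f \in \contZGr$,
\begin{equation*}
  f(t^{-1}\clnDiracOp_X) = (\pi_{X_1, t} \tensGr \pi_{X_2, t})(\comult(f)),
\end{equation*}
where $\pi_{X_i, t} \colon \contZGr \to \bndOps(\Lp^2(\clnSpinorBdl(X_i)))$ is the representation $g \mapsto g(t^{-1}\clnDiracOp_{X_i})$. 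Packaging these $t$-parameter families and composing with $\tensMap$ from \cref{subsec:externalProductLocAlgebras} yields the literal identity of graded $*$-homomorphisms
\begin{equation*}
  \varphi_{\clnDiracOp_X} = \tensMap \circ (\varphi_{\clnDiracOp_{X_1}} \tensGr \varphi_{\clnDiracOp_{X_2}}) \circ \comult \colon \contZGr \to \roeAlgLoc(X;\cliffAlg_n)^\Gamma.
\end{equation*}

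Choose $\varepsilon > 0$ so small that $\scalCurv_{g_1} > 4\varepsilon^2$ on $X_1 \setminus Z_1$ and $\scalCurv_g > 4\varepsilon^2$ on $X \setminus Z$; such $\varepsilon$ exists by the upsc hypotheses. Fix a single graded $*$-homomorphism $\psi \colon \contZGr \to \contZGr(-\varepsilon, \varepsilon)$ that is homotopy inverse to the inclusion (\cref{lem:elementaryHomotopyEqu}). By \cref{lem:roesLemmaLocalizedIndex} and the independence of the partial secondary invariants from the choice of $\psi$, this single $\psi$ represents both $\IndLP{Z_1}^{\Gamma_1}(\clnDiracOp_{X_1}^{g_1}) = [\varphi_{\clnDiracOp_{X_1}} \circ \psi]$ and $\IndLP{Z}^{\Gamma_1 \times \Gamma_2}(\clnDiracOp_X^g) = [\varphi_{\clnDiracOp_X} \circ \psi]$. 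Combining the above identity with \cref{cor:compressionCommutesWithComult}, which provides a homotopy $\comult \circ \psi \simeq (\psi \tensGr \psi) \circ \comult$ of maps into $\contZGr(-\varepsilon,\varepsilon) \tensGr \contZGr(-\varepsilon,\varepsilon)$, one obtains
\begin{equation*}
  \varphi_{\clnDiracOp_X} \circ \psi \simeq \tensMap \circ \bigl( (\varphi_{\clnDiracOp_{X_1}} \circ \psi) \tensGr (\varphi_{\clnDiracOp_{X_2}} \circ \psi) \bigr) \circ \comult.
\end{equation*}
Since $\psi$ is homotopic to the identity on $\contZGr$, the second factor $\varphi_{\clnDiracOp_{X_2}} \circ \psi$ is in turn homotopic to $\varphi_{\clnDiracOp_{X_2}}$, yielding a further homotopy to $\tensMap \circ \bigl( (\varphi_{\clnDiracOp_{X_1}} \circ \psi) \tensGr \varphi_{\clnDiracOp_{X_2}} \bigr) \circ \comult$, which by definition represents $\IndLP{Z_1}^{\Gamma_1}(\clnDiracOp_{X_1}^{g_1}) \secProd \IndL^{\Gamma_2}(\clnDiracOp_{X_2})$.

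The technical hurdle is verifying that every intermediate $*$-homomorphism in the chain of homotopies indeed takes values inside $\roeAlgLocP{Z}(X;\cliffAlg_n)^\Gamma$, not merely in $\roeAlgLoc(X;\cliffAlg_n)^\Gamma$. The common choice of $\varepsilon$ handles this: the homotopy from \cref{cor:compressionCommutesWithComult} stays inside $\contZGr(-\varepsilon, \varepsilon) \tensGr \contZGr(-\varepsilon, \varepsilon)$, on which $\varphi_{\clnDiracOp_{X_1}}$ lands in $\roeAlgLocP{Z_1}(X_1;\cliffAlg_{n_1})^{\Gamma_1}$ by \cref{lem:roesLemmaLocalizedIndex} applied to $g_1$. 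Then $\tensMap$ carries $\roeAlgLocP{Z_1}(X_1;\cliffAlg_{n_1})^{\Gamma_1} \tensGr \roeAlgLoc(X_2;\cliffAlg_{n_2})^{\Gamma_2}$ into $\roeAlgLocP{Z_1 \times X_2}(X;\cliffAlg_n)^\Gamma = \roeAlgLocP{Z}(X;\cliffAlg_n)^\Gamma$, since at $t=1$ a tensor of an operator supported near $Z_1$ on $X_1$ with an arbitrary operator on $X_2$ has support near $Z_1 \times X_2$ in $X$. Once this containment is confirmed, the chain of homotopies produces the desired equality of $\KTh$-theory classes.
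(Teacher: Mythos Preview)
Your proof is correct and essentially reproduces the paper's proof of \cref{thm:productFormula} (the general product formula for partial secondary invariants with arbitrary $Z_1$), from which the corollary follows by specializing $Z_1 = X_1$ for \labelcref{eq:primaryProductFormula} and $Z_1 = \emptyset$ for \labelcref{eq:secondaryProductFormula}. The paper treats this corollary as an immediate specialization of that theorem; you have instead rederived the theorem's proof (the same commutative-diagram argument using \cref{lem:productFormula} and \cref{cor:compressionCommutesWithComult}), so the approaches coincide up to this minor organizational difference.
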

The main part of the proof of \cref{thm:productFormula} consists of some standard computations which we indicate in the following two lemmas for the convenience of the reader.
\begin{lem}[{\cite[Appendix A.4]{higson-kasparov-trout:ABottPeriodicityTheoremForInfiniteDimensionalEuclideanSpace}}] \label{lem:expOfGradedProductOperators}
 Let $D_i \colon \hilbert_i \supseteq \domain(D_i) \to \hilbert_i$ be odd, \parensup{unbounded} self-adjoint operators on graded Hilbert spaces $\hilbert_i$, $i \in \{1,2\}$.
 Then we have the equality \parensup{of bounded operators on $\hilbert_1 \tensGr \hilbert_2$},
 \begin{equation}
  \exp(-(D_1 \tensGr \id + \id \tensGr D_2)^2) = \exp(-D_1^2) \tensGr \exp(-D_2^2). \label{eq:expOfGradedProductOperators}
 \end{equation}
\end{lem}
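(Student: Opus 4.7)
The plan is to reduce the equality to an algebraic computation in the graded tensor product and then invoke the functional calculus for commuting positive self-adjoint operators. First I would observe that $D_1 \tensGr \id$ and $\id \tensGr D_2$ are both odd operators on $\hilbert_1 \tensGr \hilbert_2$, and that the graded multiplication rule gives
\begin{equation*}
 (D_1 \tensGr \id)(\id \tensGr D_2) = D_1 \tensGr D_2, \qquad (\id \tensGr D_2)(D_1 \tensGr \id) = -(D_1 \tensGr D_2),
\end{equation*}
where the minus sign arises from the Koszul sign coming from commuting the odd operator $D_2$ past the odd operator $D_1$. Consequently $\{D_1 \tensGr \id,\, \id \tensGr D_2\} = 0$ and thus
\begin{equation*}
 (D_1 \tensGr \id + \id \tensGr D_2)^2 = (D_1 \tensGr \id)^2 + (\id \tensGr D_2)^2 = D_1^2 \tensGr \id + \id \tensGr D_2^2.
\end{equation*}
These identities can be verified on the algebraic tensor product of the cores $\domain(D_1^2)$ and $\domain(D_2^2)$, which is a core for $D_1 \tensGr \id + \id \tensGr D_2$ as well as for $D_1^2 \tensGr \id + \id \tensGr D_2^2$.

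Next I would write $A := D_1^2 \tensGr \id$ and $B := \id \tensGr D_2^2$. Both are non-negative self-adjoint operators, and their spectral projections (which act in the respective tensor factors via $\chi_E(D_i^2)$) commute. Hence $A$ and $B$ strongly commute in the sense of the joint spectral theorem, and standard functional calculus gives
\begin{equation*}
 \exp(-(A + B)) = \exp(-A)\exp(-B).
\end{equation*}
Combined with the previous step (and the essential self-adjointness of $A + B$ on the common algebraic core), this yields
\begin{equation*}
 \exp\bigl(-(D_1 \tensGr \id + \id \tensGr D_2)^2 \bigr) = \exp(-A) \, \exp(-B).
\end{equation*}

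Finally, I would use the fact that spectral calculus is compatible with external tensor products of self-adjoint operators: $\exp(-A) = \exp(-D_1^2) \tensGr \id$ and $\exp(-B) = \id \tensGr \exp(-D_2^2)$. Multiplying these, and using that the two factors are both even operators so there are no signs, gives $\exp(-D_1^2) \tensGr \exp(-D_2^2)$, as desired.

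The one genuine subtlety, and the main obstacle I would expect, is the careful domain bookkeeping underlying the two displayed identities: verifying that the algebraic tensor product of cores is in fact a core for the relevant sums and products of unbounded operators, so that the algebraic calculation extends to the self-adjoint closures. Once this is handled, the rest is a direct application of the spectral theorem for strongly commuting self-adjoint operators.
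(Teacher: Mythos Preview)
Your argument is correct. Both your proof and the paper's hinge on the same algebraic fact, namely the anticommutation $\{D_1 \tensGr \id,\, \id \tensGr D_2\} = 0$, which reduces the square to $D_1^2 \tensGr \id + \id \tensGr D_2^2$. Where the two diverge is in how the unboundedness is handled: the paper first notes that in the \emph{bounded} case the identity is immediate from the functional equation for $\exp$, and then reduces the general case to the bounded one by a spectral-theoretic approximation argument. You instead work directly with the unbounded operators, invoking the joint spectral theorem for the strongly commuting pair $A = D_1^2 \tensGr \id$ and $B = \id \tensGr D_2^2$ to obtain $\exp(-(A+B)) = \exp(-A)\exp(-B)$ in one step. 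Your route is a bit more structural and avoids an explicit approximation, at the cost of the domain bookkeeping you flag (checking that the algebraic tensor product of cores is a core for the relevant closures); the paper's route sidesteps those domain checks by pushing them into a limiting procedure. Either way the content is the same.
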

\begin{proof}
If both operators $D_1$ and $D_2$ are bounded, then the result follows immediately by the functional equation for the exponential function.
The general case can be reduced to the bounded case using the spectral theorem and an approximation argument.
\end{proof}
Below we work in the setup of \cref{thm:productFormula} and abbreviate $\clnDiracOp_{X_i}$ by $\clnDiracOp_i$ and $\clnDiracOp_X$ by $\clnDiracOp$.
\begin{lem} \label{lem:productFormula}
 The following diagram commutes:
 \begin{equation*}
 \begin{tikzcd}[row sep=small, column sep=10ex]
 \contZGr \rar{\varphi_{\clnDiracOp}} \dar{\comult} &  \roeAlgLoc(X;\cliffAlg_{n})^{\Gamma_1 \times \Gamma_2} \\
 \contZGr \tensGr \contZGr \rar{\varphi_{\clnDiracOp_1} \tensGr \varphi_{\clnDiracOp_2}} & \roeAlgLoc(X_1; \cliffAlg_{n_1})^{\Gamma_1} \tensGr \roeAlgLoc(X_2; \cliffAlg_{n_2})^{\Gamma_2}. \uar{\tensMap}
 \end{tikzcd}
 \end{equation*}
 \begin{proof}
  \begin{sloppypar}
  The statement and proof are essentially the same as, for example, in the proof of~\cite[Theorem 4.1]{dumitrascu:asymptoticMorphismKHomology}.
  It suffices to check commutativity for the generators $\eu^{-{\mathrm{x}}^2}$ and $\mathrm{x} \eu^{-\mathrm{x}^2}$ of $\contZGr$.
  Indeed, we have $\varphi_{\clnDiracOp}(\eu^{-\mathrm{x}^2})(t) = \eu^{- \frac{1}{t^2} \clnDiracOp^2} = \eu^{- \left( t^{-1} \clnDiracOp_1 \tensGr 1 + 1 \tensGr t^{-1} \clnDiracOp_2 \right)^2}$ and $\tensMap ( \varphi_{\clnDiracOp_1} \tensGr \varphi_{\clnDiracOp_2} (\comult(\eu^{-\mathrm{x}^2})) )(t) = \eu^{-\frac{1}{t^2} \clnDiracOp_1^2} \tensGr \eu^{-\frac{1}{t^2} \clnDiracOp_2^2}$.
  Thus, \cref{lem:expOfGradedProductOperators} with $D_i = \frac{1}{t} \clnDiracOp_i$ implies that $\varphi_{\clnDiracOp}(\eu^{-\mathrm{x}^2}) = \tensMap ( \varphi_{\clnDiracOp_1} \tensGr \varphi_{\clnDiracOp_2} (\comult(\eu^{-\mathrm{x}^2})))$, as required.
  A similar computation shows commutativity on the generator $\mathrm{x} \eu^{-\mathrm{x}^2}$.\qedhere
  \end{sloppypar}
 \end{proof}
\end{lem}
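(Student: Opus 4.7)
The plan is to verify the diagram commutes pointwise on a set of generators of $\contZGr$. Since both routes around the diagram are continuous graded $\ast$-homomorphisms defined on $\contZGr$, and since $\contZGr$ is generated as a $\Cstar$-algebra by the two elements $\eu^{-\mathrm{x}^2}$ and $\mathrm{x}\eu^{-\mathrm{x}^2}$ (these are the elements whose images under $\comult$ are recorded in \labelcref{eq:comultOnGen}), it suffices to check equality on these two generators.

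First I would evaluate both compositions on $\eu^{-\mathrm{x}^2}$ at a fixed parameter $t \in [1,\infty)$. The top route produces $\varphi_{\clnDiracOp}(\eu^{-\mathrm{x}^2})(t) = \eu^{-t^{-2}\clnDiracOp^2}$, where $\clnDiracOp = \clnDiracOp_1 \tensGr \id + \id \tensGr \clnDiracOp_2$. The bottom route applies $\comult$ first to obtain $\eu^{-\mathrm{x}^2} \tensGr \eu^{-\mathrm{x}^2}$ by \labelcref{eq:comultOnGen}, and then $\tensMap \circ (\varphi_{\clnDiracOp_1} \tensGr \varphi_{\clnDiracOp_2})$ delivers $\eu^{-t^{-2}\clnDiracOp_1^2} \tensGr \eu^{-t^{-2}\clnDiracOp_2^2}$ in $\roeAlgLoc(X;\cliffAlg_n)^{\Gamma_1 \times \Gamma_2}$. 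These two operators agree by \cref{lem:expOfGradedProductOperators} applied with $D_i = t^{-1}\clnDiracOp_i$.

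Next I would treat $\mathrm{x}\eu^{-\mathrm{x}^2}$. Using the second identity in \labelcref{eq:comultOnGen}, the bottom route at parameter $t$ yields
\begin{equation*}
  t^{-1}\clnDiracOp_1 \eu^{-t^{-2}\clnDiracOp_1^2} \tensGr \eu^{-t^{-2}\clnDiracOp_2^2} + \eu^{-t^{-2}\clnDiracOp_1^2} \tensGr t^{-1}\clnDiracOp_2 \eu^{-t^{-2}\clnDiracOp_2^2},
\end{equation*}
while the top route produces $t^{-1}\clnDiracOp \,\eu^{-t^{-2}\clnDiracOp^2}$. Splitting the exponential using \cref{lem:expOfGradedProductOperators} and then expanding $\clnDiracOp = \clnDiracOp_1 \tensGr \id + \id \tensGr \clnDiracOp_2$ reduces the top side to the same expression, upon remembering that $\clnDiracOp_1$ is odd so that the graded Leibniz-type identity holds.

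The only potentially delicate point is that the operators are unbounded, so one must be careful in manipulating functional-calculus expressions for the graded sum $\clnDiracOp_1 \tensGr \id + \id \tensGr \clnDiracOp_2$. However, this concern is already absorbed by \cref{lem:expOfGradedProductOperators} (which is proved by spectral-theoretic reduction to the bounded case), so no genuine obstacle remains. The whole argument is essentially a direct calculation combining the comultiplication formulas on the two generators with the splitting of the heat-semigroup-type operator across the graded tensor product.
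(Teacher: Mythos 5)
Your proof follows the same route as the paper's: reduce to the generators $\eu^{-\mathrm{x}^2}$ and $\mathrm{x}\eu^{-\mathrm{x}^2}$ via \labelcref{eq:comultOnGen} and invoke \cref{lem:expOfGradedProductOperators}. Your explicit computation for $\mathrm{x}\eu^{-\mathrm{x}^2}$ (which the paper leaves as a ``similar computation'') is correct, though the reason no Koszul sign appears when distributing $\clnDiracOp_1 \tensGr \id + \id \tensGr \clnDiracOp_2$ across the split exponential is that the factors $\eu^{-t^{-2}\clnDiracOp_i^2}$ are even, not that $\clnDiracOp_1$ is odd.
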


\begin{proof}[Proof of \cref{thm:productFormula}]
   By the assumptions we can find $\varepsilon > 0$ such that $\varphi_{\clnDiracOp_1}$ maps $\contZGr(-\varepsilon, \varepsilon)$ to $\roeAlgLocP{Z_1}(X_1; \cliffAlg_{n_1})$ and $\varphi_{\clnDiracOp}$ maps $\contZGr(-\varepsilon, \varepsilon)$ to $\roeAlgLocP{Z}(X; \cliffAlg_n)^{\Gamma_1 \times \Gamma_2}$.
    Choose a graded $*$-homomorphism $\psi \colon \contZGr \to \contZGr(-\varepsilon, \varepsilon)$ homotopy inverse to the inclusion $\iota \colon \contZGr(-\varepsilon, \varepsilon) \hookrightarrow \contZGr$ (as in \cref{defi:secondaryInvariant}).
    
 We then have $\IndLP{Z}^{\Gamma_1 \times \Gamma_2}(\clnDiracOp^g) = [\varphi_\clnDiracOp \circ \psi] \in \KTh_0(\roeAlgLocP{Z}(X; \cliffAlg_{n})^{\Gamma_1 \times \Gamma_2})$, $\IndLP{Z_1}^{\Gamma_1}(\clnDiracOp_1^{g_1}) = [\varphi_{\clnDiracOp_1} \circ \psi] \in \KTh_0(\roeAlgLocP{Z_1}(X_1; \cliffAlg_{n_1})^{\Gamma_1})$ and $\IndL^{\Gamma_2}(\clnDiracOp_2) = [\varphi_{\clnDiracOp_2}] = [\varphi_{\clnDiracOp_2} \circ \psi] \in \KTh_0(\roeAlgLoc(X_2; \cliffAlg_{n_2})^{\Gamma_2})$.    
   Thus, to prove the product formula, we need to show that the following diagram commutes up to homotopy:
   \begin{equation*}
      \begin{tikzcd}[row sep=small, column sep=large]
         \contZGr \rar{\psi} \dar{\comult} & \contZGr(-\varepsilon, \varepsilon) \rar{\varphi_\clnDiracOp} \dar{\comult} & \roeAlgLocP{Z}(X; \cliffAlg_n)^{\Gamma_1 \times \Gamma_2} \\
         \contZGr \tensGr \contZGr \rar{\psi \tensGr \psi} & \contZGr(-\varepsilon, \varepsilon) \tensGr \contZGr(-\varepsilon, \varepsilon) \rar{\varphi_{\clnDiracOp_1} \tensGr \varphi_{\clnDiracOp_2}} & \roeAlgLocP{Z_1}(X_1; \cliffAlg_{n_1})^{\Gamma_1} \tensGr \roeAlgLoc(X_2; \cliffAlg_{n_2})^{\Gamma_2}. \uar{\tensMap}
      \end{tikzcd}
   \end{equation*}
   Indeed, the left square commutes up to homotopy by \cref{cor:compressionCommutesWithComult}.
   The right square strictly commutes as it is a restriction of the diagram from \cref{lem:productFormula}.
\end{proof}

\begin{cor} \label{cor:productFormulaRelIndex}
  Let $X = X_1 \times X_2$ be as before.
  Now suppose that there are two upsc metrics $g_{1,0}$ and $g_{1,1}$ on $X_1$ which are in the same uniform equivalence class and such that $g_i = g_{1,i} \oplus g_{2}$ has upsc on $X$ for $i=0,1$.
  Then,
  \begin{equation}
    \IndRel^{\Gamma_1 \times \Gamma_2}(g_0, g_1) =  \IndRel^{\Gamma_1}(g_{1,0}, g_{1,1}) \secProd \Ind^{\Gamma_2}(\clnDiracOp_2).
  \end{equation}
\end{cor}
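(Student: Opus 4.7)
The plan is to reduce the relative-index product formula to the partial secondary product formula (\cref{thm:productFormula}) applied to an appropriate concordance metric on $W \times X_2$, and then push forward along the evaluation and projection maps used in \cref{defi:relIndex}. First, pick a metric $h_1$ on $W := \R \times X_1$ interpolating between $g_{1,0}$ and $g_{1,1}$ as in \cref{defi:relIndex}; so $h_1$ has upsc outside $[0,1] \times X_1$. Then $h := h_1 \oplus g_2$ on $W \times X_2 = \R \times X_1 \times X_2$ has upsc outside $[0,1] \times X$, and after checking the uniform-continuity condition of \cref{defi:concordanceRelZ} one sees that $h$ is an admissible metric for computing $\IndRel^{\Gamma_1 \times \Gamma_2}(g_0, g_1)$.

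Next I would apply \cref{thm:productFormula} with first factor $(W, h_1)$, closed subset $Z_1 = [0,1] \times X_1$, and second factor $(X_2, g_2)$, obtaining
\[
  \IndLP{[0,1] \times X}^{\Gamma_1 \times \Gamma_2}(\clnDiracOp_{W \times X_2}^{h}) = \IndLP{[0,1] \times X_1}^{\Gamma_1}(\clnDiracOp_W^{h_1}) \secProd \IndL^{\Gamma_2}(\clnDiracOp_{X_2})
\]
in $\KTh_{n+1}(\roeAlgLocP{[0,1] \times X}(W \times X_2)^{\Gamma_1 \times \Gamma_2})$. Pushing this forward along $(\ev_1)_\KPh$, and using that $\ev_1 \circ \tensMap = \tensMap_1 \circ (\ev_1 \tensGr \ev_1)$ by the very definition of the external products in \cref{subsec:externalProductLocAlgebras}, gives the analogous identity for the localized coarse indices in the Roe algebra,
\[
  \Ind^{\Gamma_1 \times \Gamma_2}_{[0,1] \times X}(\clnDiracOp_{W \times X_2}^{h}) = \Ind^{\Gamma_1}_{[0,1] \times X_1}(\clnDiracOp_W^{h_1}) \secProd \Ind^{\Gamma_2}(\clnDiracOp_{X_2}).
\]

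Finally, I would apply $(\proj_X)_\KPh$ for $\proj_X = \proj_{X_1} \times \id_{X_2}$, viewed as a coarse equivalence from the closed subspace $[0,1] \times X \subseteq W \times X_2$ to $X$. By the functoriality of the Roe-algebra external product established in \cref{subsec:externalProductLocAlgebras} (i.e.\ $(f_1 \times f_2)_\KPh(x \secProd y) = (f_1)_\KPh x \secProd (f_2)_\KPh y$), the pushforward splits as $(\proj_{X_1})_\KPh$ on the first factor and the identity on the second. Combined with \cref{defi:relIndex}, the left-hand side becomes $\IndRel^{\Gamma_1 \times \Gamma_2}(g_0, g_1)$ and the right-hand side becomes $\IndRel^{\Gamma_1}(g_{1,0}, g_{1,1}) \secProd \Ind^{\Gamma_2}(\clnDiracOp_2)$, proving the claim.

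The main obstacle is purely formal bookkeeping: one must check that the Roe-algebra external product $\secProd$ commutes with $(\ev_1)_\KPh$ from the localization-algebra version, and that it behaves functorially under the coarse equivalence $\proj_X$. Both facts are immediate from the definition of $\tensMap$ and $\tensMap_1$, so all the analytic content is already contained in \cref{thm:productFormula}; no further computation with the Dirac operator or the concordance metric is required.
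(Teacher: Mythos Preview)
Your proposal is correct and follows essentially the same approach as the paper: choose an interpolating metric on $\R \times X_1$, apply \cref{thm:productFormula} to the product with $(X_2,g_2)$, then push forward along $(\ev_1)_\KPh$ and $(\proj_X)_\KPh$ using the compatibility of the external product with these maps. The paper makes the intermediate tensor maps $\tensMap$, $\tensMap_1^\prime$, $\tensMap_1$ explicit in a chain of equalities, but the content is identical to your argument.
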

\begin{proof}
  By \cref{subsec:twoPSCrelIndex}, 
  \begin{align*}
    \IndRel^{\Gamma_1}(g_{1,0}, g_{1,1}) &= (\proj_{X_1})_\KPh \Ind^{\Gamma_1}_{[0,1] \times X_1}(\clnDiracOp_{\R \times X_1}^{h}) \\
    &= (\proj_{X_1})_\KPh (\ev_1)_\KPh \IndLP{[0,1] \times X_1}^{\Gamma_1}\left( \clnDiracOp_{\R \times X_1}^h \right),
  \end{align*}
  where, to define $\clnDiracOp_{\R \times X_1}^h$, we use an appropriate metric $h$ on $\R \times X_1$ interpolating between $g_{1,0}$ and $g_{1,1}$ on $[0,1] \times X_1$.
  
  Consider the maps
  \begin{align*}
      \tensMap &\colon \roeAlgLocP{[0,1] \times X_1}(\R \times X_1)^{\Gamma_1} \tens \roeAlgLoc(X_2)^{\Gamma_2} \to \roeAlgLocP{[0,1] \times X_1 \times X_2}(\R \times X_1 \times X_2)^{\Gamma_1 \times \Gamma_2}, \\
      t_1^\prime &\colon \roeAlg([0,1] \times X_1 )^{\Gamma_1} \tens \roeAlg(X_2)^{\Gamma_2} \to \roeAlg([0,1] \times X_1 \times X_2)^{\Gamma_1 \times \Gamma_2}, \\
      t_1 &\colon \roeAlg(X_1)^{\Gamma_1} \tens \roeAlg(X_2)^{\Gamma_2} \to \roeAlg(X_1 \times X_2)^{\Gamma_1 \times \Gamma_2}.
  \end{align*}
  defined as in \cref{subsec:externalProductLocAlgebras}.
  Then
  \begin{align*}
    \IndRel^{\Gamma_1 \times \Gamma_2}(g_{0}, g_{1}) &= (\proj_{X})_\KPh (\ev_1)_\KPh \IndLP{[0,1] \times X}^{\Gamma_1}\left( \clnDiracOp_{\R \times X}^{h \oplus g_2} \right), \\
    \intertext{and using the product formula \labelcref{eq:generalProductFormula},}
   &= (\proj_X)_\KPh (\ev_1)_\KPh \tensMap_\KPh \left(\IndLP{[0,1] \times X_1}^{\Gamma_1}\left( \clnDiracOp_{\R \times X_1}^h  \right) \KThProd \IndL^{\Gamma_2}(\clnDiracOp_2) \right) \\
   &= (\proj_X)_\KPh {(\tensMap_1^\prime)}_\KPh \left(\Ind_{[0,1] \times X_1}^{\Gamma_1}\left( \clnDiracOp_{\R \times X_1}^h  \right) \KThProd \Ind^{\Gamma_2}(\clnDiracOp_2) \right)\\
   &= (\tensMap_1)_\KPh \left(\left( (\proj_{X_1})_\KPh\Ind_{[0,1] \times X_1}^{\Gamma_1}\left( \clnDiracOp_{\R \times X_1}^h  \right) \right) \KThProd \Ind^{\Gamma_2}(\clnDiracOp_2) \right)\\
   &= (\tensMap_1)_\KPh \left( \IndRel^{\Gamma_1}(g_{1,0}, g_{1,1}) \KThProd \Ind^{\Gamma_2}(\clnDiracOp_2) \right)\\
   &= \IndRel^{\Gamma_1}(g_{1,0}, g_{1,1}) \secProd \Ind^{\Gamma_2}(\clnDiracOp_2).\qedhere
  \end{align*}
\end{proof}

\section{Compatibility with boundary maps} \label{sec:boundary}
The goal of this section is to show that the (partial) secondary invariants behave as expected with respect to Mayer--Vietoris boundary maps.
This amounts to an application of the principle \enquote{Boundary of Dirac is Dirac}.

\subsection{Localization algebras and Mayer--Vietoris sequences}
Let $X = X_1 \cup X_2$ be a cover of a proper metric space by two closed subspaces.
We wish to construct Mayer--Vietoris sequences relating the $\KOTh$-theory groups of the localization algebras of the spaces $X$, $X_1$, $X_2$ and $X_1 \cap X_2$.
To do this, the general principle is to employ the ideals associated to subspaces from \cref{defi:locAlgebras} and then to try to apply the abstract Mayer--Vietoris sequence we have discussed in \cref{subsec:longExactSeq}.
For the Roe algebra this has been implemented in \cite{higson-roe-yu:coarseMVprinciple}, a recent treatment which also deals with the structure algebra can be found in~\cite{siegel:mayerVietorisAnalyticStructureGroup}.
In this section, we sketch how to carry out this program for localization algebras.
In the context of metric simplicial complexes, this has been done by Yu~\cite{yu:localizationAlgebrasAndCoarseBaumConnes}.

\begin{defi}
    We say a cover $X = X_1 \cup X_2$ by two closed $\Gamma$-invariant subsets $X_1$, $X_2$ is (metrically) \emph{uniformly excisive} if there exists a function $C \colon \Rgr \to \Rgr$ with $C(r) \to 0$ as $r \to 0$ such that
    \begin{equation*}
        \nbh_r(X_1) \cap \nbh_r(X_2) \subseteq \nbh_{C(r)}(X_1 \cap X_2) \quad \text{for all $r > 0$.}
    \end{equation*}
\end{defi}

This is a slightly stronger requirement than \emph{coarse excisiveness} (or \emph{$\omega$-excisiveness} in the terminology of~\cite{siegel:mayerVietorisAnalyticStructureGroup}), where one does not require $C(r) \to 0$ as $r \to 0$.
However, if the cover $X = X_1 \cup X_2$ is coarsely excisive with $X_1, X_2$ closed and $X_1 \cap X_2$ is $\Gamma$-cocompact, then a compactness argument shows that it is uniformly excisive.
Moreover, if $X$ is a geodesic metric space, then any cover by two closed subsets is automatically uniformly excisive.

\begin{lem}[{\cite{higson-roe-yu:coarseMVprinciple}}] \label{lem:decompositionOfLocAlgebraMV}
    If $X = X_1 \cup X_2$ is uniformly excisive, then we have $\roeAlgLoc(X_1 \subset X)^{\Gamma} + \roeAlgLoc(X_2 \subset X)^{\Gamma} = \roeAlgLoc(X)^{\Gamma}$ and $\roeAlgLoc(X_1 \subset X)^{\Gamma} \cap \roeAlgLoc(X_2 \subset X)^{\Gamma} = \roeAlgLoc(X_1 \cap X_2 \subset X)^{\Gamma}$.
    The analogous statements for $\roeAlg$ and $\roeAlgLocZ$ are also true.
\end{lem}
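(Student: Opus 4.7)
The plan is to adapt the Roe-algebra argument of Higson--Roe--Yu~\cite{higson-roe-yu:coarseMVprinciple} by accommodating the time parameter and the uniform continuity demanded by the localization algebra. Abbreviate $I := \roeAlgLoc(X_1 \subset X)^\Gamma$, $J := \roeAlgLoc(X_2 \subset X)^\Gamma$ and $K := \roeAlgLoc(X_1 \cap X_2 \subset X)^\Gamma$. The $\roeAlg$-versions are recorded in~\cite{higson-roe-yu:coarseMVprinciple,siegel:mayerVietorisAnalyticStructureGroup}, and the $\roeAlgLocZ$-versions will follow from the $\roeAlgLoc$-case because all constructions below preserve vanishing at $t = 1$.

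For the sum decomposition $I + J = \roeAlgLoc(X)^\Gamma$, I would build a $\Gamma$-invariant time-dependent partition of unity. Fix a Lipschitz cutoff $\chi \colon [0, \infty) \to [0, 1]$ with $\chi(0) = 1$ and $\chi \equiv 0$ on $[1, \infty)$, and choose a positive function $\epsilon \colon [1, \infty) \to \Rgr$ with $\epsilon(t) \to 0$ sufficiently slowly (e.g.\ $\epsilon(t) = 1/\log(1 + t)$). Set $\psi_1^t(x) := \chi(d(x, X_1)/\epsilon(t))$ and $\psi_2^t := 1 - \psi_1^t$; both are $\Gamma$-invariant since $X_1$ is. Because $X \setminus X_1 \subseteq X_2$ and $X_2$ is closed, we have $\supp(\psi_2^t) \subseteq X_2$, while $\supp(\psi_1^t) \subseteq \overline{\nbh_{\epsilon(t)}(X_1)}$. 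Put $L_i(t) := \psi_i^t L(t)$. Each $L_i(t)$ is $\Gamma$-equivariant, $\cliffAlg_n$-linear, locally compact, and of propagation at most $\propag(L(t))$, and the family $L_i$ inherits uniform continuity from $L$ provided $\epsilon$ is slow enough. One then checks that $\supp(L_i(t)) \subseteq \nbh_{\epsilon(t) + \propag(L(t))}(X_i \times X_i)$, whence $L_i \in \roeAlgLoc(X_i \subset X)^\Gamma$ and $L = L_1 + L_2$.

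For the intersection identity, $K \subseteq I \cap J$ is immediate, and the reverse inclusion I would prove in two steps. First, $IJ \subseteq K$ by a support computation: for $a, b$ in the pre-closures of $I, J$ with support-control functions $c_1(t), c_2(t) \to 0$, any $(x,z) \in \supp(a(t) b(t))$ admits an intermediate point $y \in \nbh_{c_1(t)}(X_1) \cap \nbh_{c_2(t)}(X_2)$. Uniform excisiveness places $y$ in $\nbh_{C(\max\{c_1(t), c_2(t)\})}(X_1 \cap X_2)$, and $x, z$ lie within $\propag(a(t)) + \propag(b(t))$ of $y$. Since all of $c_1(t), c_2(t), \propag(a(t)), \propag(b(t))$ vanish at infinity, $ab$ satisfies the support control required by $K$; passing to norm limits gives $IJ \subseteq K$. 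Second, for $T \in I \cap J$, an approximate unit $\{e_\lambda\}$ of the $\Cstar$-algebra $I$ satisfies $e_\lambda T \to T$, and by construction $e_\lambda T \in IJ \subseteq K$, so $T \in \overline{K} = K$.

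The main obstacle is the bookkeeping around uniform continuity in the time-dependent partition of unity; this is where one must be careful about the compatibility between the decay of $\epsilon(t)$, the Lipschitz modulus of $\chi$, and the uniform continuity of $L$. Everything else reduces to routine support tracking combined with the standard trick of using an approximate unit to absorb a C*-algebra intersection into a product ideal.
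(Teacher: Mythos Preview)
Your proposal is correct, but it is more elaborate than necessary for the sum decomposition, and the paper's argument is considerably simpler.

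For $I + J = \roeAlgLoc(X)^\Gamma$, the paper dispenses with any time-dependent partition of unity and simply multiplies by the \emph{time-independent} characteristic functions: for $T \in \roeAlgLoc(X)^\Gamma$ one sets $T_1 := \charFun_{X_1} T$ and $T_2 := \charFun_{X \setminus X_1} T$. Multiplication by a fixed bounded operator preserves uniform continuity in $t$ automatically, so there is nothing to check there. The point is that the shrinking-support condition of $\roeAlgLoc(X_i \subset X)^\Gamma$ comes for free from the shrinking propagation of $T$: one has $\supp(\charFun_{X_1} T(t)) \subseteq X_1 \times \nbh_{\propag(T(t))}(X_1) \subseteq \nbh_{\propag(T(t))}(X_1 \times X_1)$, and $\propag(T(t)) \to 0$ by hypothesis. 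Your construction with $\psi_i^t$ and $\epsilon(t)$ works, but all the bookkeeping you flag as ``the main obstacle'' (compatibility of the decay of $\epsilon$, the Lipschitz modulus of $\chi$, uniform continuity) is an artefact of introducing unnecessary time dependence.

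For the intersection, your argument and the paper's are essentially the same. The paper invokes the general $\Cstar$-algebra fact $I \cap J = IJ$ for closed ideals (which is exactly your approximate-unit step), and then checks $IJ \subseteq K$ by the same support computation you give, appealing to uniform excisiveness; see \cite[Lemma~6.3.6]{higson-roe:analyticKHomology}.
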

\begin{proof}
   We only consider the statement for the localization algebra $\roeAlgLoc(X)^\Gamma$, since the other cases are proven analogously.
   The first claim is true since for each $T \in \roeAlgLoc(X)^{\Gamma}$, we have $\charFun_{X_1} T \in \roeAlgLoc(X_1 \subset X)^{\Gamma}$ and $\charFun_{X \setminus X_1} T \in \roeAlgLoc(X_2 \subset X)^{\Gamma}$, where $\charFun_A$ denotes the characteristic function of a subset $A \subset X$.
   The second claim follows since $\roeAlgLoc(X_1 \subset X)^{\Gamma} \cap \roeAlgLoc(X_2 \subset X)^{\Gamma} = \roeAlgLoc(X_1 \subset X)^\Gamma \roeAlgLoc(X_2 \subset X)^{\Gamma} = \roeAlgLoc(X_1 \cap X_2 \subset X)^\Gamma$, where the first equality is a general fact concerning ideals in $\Cstar$-algebras, and the latter follows from uniform excisiveness by general properties of the support of an operator, see~\cite[Lemma 6.3.6]{higson-roe:analyticKHomology}.
\end{proof}
In view of \cref{lem:idealsOfSubspace,lem:decompositionOfLocAlgebraMV}, the abstract Mayer--Vietoris sequence from \cref{subsec:longExactSeq} gives functorially compatible Mayer--Vietoris sequences for the $\KTh$-theory of $\roeAlg$, $\roeAlgLoc$ and $\roeAlgLocZ$ associated to a uniformly excisive decomposition.
In addition, we deduce the following more general version:
\begin{cor}
  Let $X = X_1 \cup X_2$ be uniformly excisive and $Z \subset X$ be closed $\Gamma$-invariant subset.
  Suppose furthermore that $Z = (Z \cap X_1) \cup (Z \cap X_2)$ is a coarsely excisive cover.
  Then:
\begin{gather*}
\roeAlgLocP{Z \cap X_1}(X_1 \subset X)^{\Gamma} + \roeAlgLocP{Z \cap X_2}(X_2 \subset X)^{\Gamma} = \roeAlgLocP{Z}(X)^{\Gamma}, \\ 
\roeAlgLocP{Z \cap X_1}(X_1 \subset X)^{\Gamma} \cap \roeAlgLocP{Z \cap X_2}(X_2 \subset X)^{\Gamma} = \roeAlgLocP{Z \cap X_1 \cap X_2}(X_1 \cap X_2 \subset X)^{\Gamma}.
\end{gather*}
Consequently, there is a long exact Mayer--Vietoris sequence as follows:
\begin{equation*} 
  \begin{tikzcd}[column sep=small, row sep=small]
    \dotsm \rar &\KTh_{n+1}\left( \roeAlgLocP{Z}(X)^\Gamma \right) \rar{\bdMV} & \KTh_{n}\left( \roeAlgLocP{Z \cap X_1 \cap X_2}(X_1 \cap X_2 \subset X)^\Gamma \right) \rar & {}\\
    {} \rar & \overset{\KTh_n\left(\roeAlgLocP{Z \cap X_1}(X_1 \subset X)^\Gamma \right)}{\underset{\KTh_n\left(\roeAlgLocP{Z \cap X_2}(X_2 \subset X)^\Gamma \right)}{\oplus}} \rar &  \KTh_n\left(\roeAlgLocP{Z}(X)^\Gamma \right) \rar{\bdMV} & \dotsm
  \end{tikzcd}
\end{equation*}

\end{cor}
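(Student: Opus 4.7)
The plan is to reduce both algebraic identities to the corresponding statements for $\roeAlgLoc$ and $\roeAlg$ separately and then feed them into the abstract Mayer--Vietoris sequence from \cref{subsec:longExactSeq}. Note first that since $Z = (Z \cap X_1) \cup (Z \cap X_2)$ is coarsely excisive, the Higson--Roe--Yu decomposition principle for Roe algebras gives
\begin{equation*}
    \roeAlg(Z \cap X_1 \subset X)^{\Gamma} + \roeAlg(Z \cap X_2 \subset X)^{\Gamma} = \roeAlg(Z \subset X)^\Gamma
\end{equation*}
as well as the corresponding identity for intersections with $\roeAlg(Z \cap X_1 \cap X_2 \subset X)^\Gamma$ on the right. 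Combined with \cref{lem:decompositionOfLocAlgebraMV} for the $\roeAlgLoc$ and $\roeAlgLocZ$ versions (which use the stronger uniform excisiveness of $X = X_1 \cup X_2$), all the ``building block'' identities are in place.

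The intersection identity is the easier of the two. An element $L$ lies in the left-hand side iff simultaneously $L \in \roeAlgLoc(X_1 \subset X)^{\Gamma} \cap \roeAlgLoc(X_2 \subset X)^{\Gamma}$ and $\ev_1(L) \in \roeAlg(Z \cap X_1 \subset X)^\Gamma \cap \roeAlg(Z \cap X_2 \subset X)^\Gamma$. By \cref{lem:decompositionOfLocAlgebraMV} the first condition collapses to $L \in \roeAlgLoc(X_1 \cap X_2 \subset X)^\Gamma$, and by Higson--Roe--Yu the second collapses to $\ev_1(L) \in \roeAlg(Z \cap X_1 \cap X_2 \subset X)^\Gamma$, which together say exactly that $L \in \roeAlgLocP{Z \cap X_1 \cap X_2}(X_1 \cap X_2 \subset X)^\Gamma$.

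For the sum identity the inclusion $\subseteq$ is immediate from the definitions. For the reverse, given $L \in \roeAlgLocP{Z}(X)^\Gamma$, first apply the Roe algebra decomposition to $\ev_1(L)$ to write $\ev_1(L) = T_1 + T_2$ with $T_i \in \roeAlg(Z \cap X_i \subset X)^\Gamma$. Next, lift each $T_i$ to some $\widetilde{L}_i \in \roeAlgLocP{Z \cap X_i}(X_i \subset X)^\Gamma$: this is possible because any finite-propagation generator of $\roeAlg(Z \cap X_i \subset X)^\Gamma$ can be approximated, via a standard partition-of-unity argument, by operators of arbitrarily small propagation supported near $X_i$, yielding a preimage under $\ev_1$ in the localization algebra of $X_i \subset X$. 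Then $L - \widetilde{L}_1 - \widetilde{L}_2$ lies in $\roeAlgLocZ(X)^\Gamma$, and by the $\roeAlgLocZ$-version of \cref{lem:decompositionOfLocAlgebraMV} we can split it as $L_1' + L_2'$ with $L_i' \in \roeAlgLocZ(X_i \subset X)^\Gamma \subseteq \roeAlgLocP{Z \cap X_i}(X_i \subset X)^\Gamma$. Summing yields the desired decomposition $L = (\widetilde{L}_1 + L_1') + (\widetilde{L}_2 + L_2')$.

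With both ideal identities established, the two algebras $I_i := \roeAlgLocP{Z \cap X_i}(X_i \subset X)^{\Gamma}$ are closed two-sided ideals of $A := \roeAlgLocP{Z}(X)^\Gamma$ satisfying $I_1 + I_2 = A$ and $I_1 \cap I_2 = \roeAlgLocP{Z \cap X_1 \cap X_2}(X_1 \cap X_2 \subset X)^\Gamma$, so the abstract Mayer--Vietoris sequence \labelcref{eq:abstractMV} from \cref{subsec:longExactSeq} delivers the claimed long exact sequence. The main obstacle is the lifting step in the sum decomposition; the existence of sufficiently many finite-propagation lifts across $\ev_1$ is where one must verify that the standard approximation constructions can be carried out while preserving support near $X_i$, but this is routine once one works with operators generated by compressions against smoothing kernels of shrinking support.
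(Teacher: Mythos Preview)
Your argument is correct and is essentially the natural way to flesh out the corollary, which the paper itself states without proof as an immediate consequence of \cref{lem:decompositionOfLocAlgebraMV} and the Higson--Roe--Yu decomposition for Roe algebras. Your reduction of both identities to the corresponding ones for $\roeAlgLoc$, $\roeAlgLocZ$ and $\roeAlg$ separately is exactly the right idea, and the intersection argument is clean.

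One point deserves comment: you significantly overcomplicate the lifting step in the sum identity. There is no need for partitions of unity or ``compressions against smoothing kernels of shrinking support.'' Given $T_i \in \roeAlg(Z \cap X_i \subset X)^\Gamma$ of finite propagation and supported in $\nbh_R(Z \cap X_i) \times \nbh_R(Z \cap X_i)$, the path
\[
\widetilde{L}_i(t) := \max(0,\, 2-t)\, T_i
\]
already lies in $\roeAlgLoc(X_i \subset X)^\Gamma$: it is bounded, uniformly continuous, has propagation identically zero for $t \geq 2$ (hence $\propag \to 0$), and its support is contained in $\nbh_{c(t)}(X_i \times X_i)$ with $c(t) = R$ for $t < 2$ and $c(t) = 0$ for $t \geq 2$. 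Since $\ev_1(\widetilde{L}_i) = T_i \in \roeAlg(Z \cap X_i \subset X)^\Gamma$, this gives $\widetilde{L}_i \in \roeAlgLocP{Z\cap X_i}(X_i \subset X)^\Gamma$ directly. The general case then follows by density, as the lift is an isometry. So the ``main obstacle'' you flag in the last paragraph is in fact a non-issue, and you should drop that caveat.
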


\subsection{\enquote{Boundary of Dirac is Dirac}} \label{subsec:boundaryOfDiracIsDirac}
The following lemma is a standard fact.
However, we include a proof so as to demonstrate that it can be verified directly in our present setup.
\begin{lem} \label{lem:boundaryOfDiracIsOne}
   The Mayer--Vietoris boundary map $\bdMV \colon \KOTh_{1}(\roeAlgLoc(\R)) \to \KOTh_{0}(\roeAlgLoc(\{0\}))$ associated to the cover $\R = \Rgeq \cup \Rleq$ maps $\IndL(\clnDiracOp_\R) \in \KOTh_{1}(\roeAlgLoc(\R))$ to the unit element $1 \in \KOTh_0(\RReal) \iso \KOTh_{0}(\roeAlgLoc(\{0\}))$.
\end{lem}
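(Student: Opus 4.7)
My plan is to reduce the statement to the Dirac--dual-Dirac computation from \cref{subsec:bott}, combined with a long exact sequence argument that pins down the group and the naturality with respect to the comparison map $\gamma$ of \cref{rem:relationToETheoryFundamentalClass}.

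First, I would verify that $\bdMV$ is an isomorphism of groups both isomorphic to $\Z$ (in the complex case, with the appropriate adjustment for $\KOTh$-coefficients in the real case). The Mayer--Vietoris sequence for $\R = \Rgeq \cup \Rleq$ reads
\begin{equation*}
  \KOTh_1(\roeAlgLoc(\Rgeq)) \oplus \KOTh_1(\roeAlgLoc(\Rleq)) \to \KOTh_1(\roeAlgLoc(\R)) \overset{\bdMV}{\to} \KOTh_0(\roeAlgLoc(\{0\})) \to \KOTh_0(\roeAlgLoc(\Rgeq)) \oplus \KOTh_0(\roeAlgLoc(\Rleq)).
\end{equation*}
Via $\gamma$ from \cref{rem:relationToETheoryFundamentalClass}, the groups $\KOTh_\KPh(\roeAlgLoc(\Rpm))$ compute the locally-finite K-homology of a half-line; since $\contZ([0,\infty))$ is contractible as a $\Cstar$-algebra, these groups vanish. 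Consequently $\bdMV$ is injective, and the map $\KOTh_0(\roeAlgLoc(\{0\})) \to \KOTh_0(\roeAlgLoc(\Rgeq)) \oplus \KOTh_0(\roeAlgLoc(\Rleq))$ on the right being trivially injective forces $\bdMV$ to hit everything, hence it is an isomorphism. In particular $\bdMV(\IndL(\clnDiracOp_\R))$ is a generator.

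Second, I would identify the sign. Here the plan is to use naturality of the Mayer--Vietoris boundary under $\gamma$, reducing to the corresponding assertion in $\KOTh$-homology. By \cref{rem:relationToETheoryFundamentalClass}, $\gamma_\KPh(\IndL(\clnDiracOp_\R))$ is represented by the Dirac element $\alpha \colon \contZGr \tensGr \contZ(\R) \asympMorphism \cptOps \tensGr \cliffAlg_1$ of \cref{subsec:bott}. The topological MV boundary for the cover $\R = \Rgeq \cup \Rleq$ sends the fundamental class $[\clnDiracOp_\R]$ to the fundamental class $1 \in \KOTh_0(\{0\}) = \KOTh_0(\RReal)$; modulo the Bott map~\labelcref{eq:higherKTheoryByCliffAlg}, this is precisely a rephrasing of the Dirac--dual-Dirac identity $\alpha_\ast(b_1) = 1$ that is recalled in \cref{subsec:bott}. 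Composing with the identifications gives $\bdMV(\IndL(\clnDiracOp_\R)) = 1$.

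The main obstacle I anticipate is the naturality statement in the second step: namely, that $\bdMV$ on the $\KOTh$-theory of localization algebras corresponds under $\gamma$ to the MV boundary on $\KOTh$-homology. Since the ideals $\roeAlgLoc(\Rpm \subset \R; \cliffAlg_1)$ and their intersection $\roeAlgLoc(\{0\} \subset \R; \cliffAlg_1)$ are defined by support conditions on the space side, and $\gamma$ is induced by evaluating the localization family against $\contZ(\R)$, the compatibility reduces to diagram-chasing with the abstract MV construction of \cref{subsec:longExactSeq}, but it must be checked carefully (in particular that the asymptotic-morphism product respects the relevant sub-algebras). As an alternative, a direct hands-on computation is possible: one picks a Heaviside-type cutoff $\chi$ and exhibits $M_\chi \varphi_{\clnDiracOp_\R}(f)$ as an explicit lift of $\pi_-(\varphi_{\clnDiracOp_\R}(f))$ to $I_+ = \roeAlgLoc(\Rgeq \subset \R; \cliffAlg_1)$ for $f$ of compactly supported Fourier transform, then uses the description of the connecting map from the remark on the alternative MV boundary; but since $f \mapsto M_\chi \varphi_{\clnDiracOp_\R}(f)$ is not a $\ast$-homomorphism, one has to work through a quasi-homomorphism picture, and the computation essentially reproduces the Bott element near $0$.
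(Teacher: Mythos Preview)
Your overall strategy---reduce to the Dirac--dual-Dirac identity $\alpha_\ast(b_1)=1$ via the comparison map $\gamma$---is the same as the paper's. There is a small slip in your first step: the map $\KOTh_0(\roeAlgLoc(\{0\})) \to \KOTh_0(\roeAlgLoc(\Rgeq)) \oplus \KOTh_0(\roeAlgLoc(\Rleq))$ is not ``trivially injective'' but rather \emph{zero} (its target vanishes, while its domain is $\Z$); surjectivity of $\bdMV$ follows from exactness because the kernel of the zero map is everything. This is just wording, not substance.

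The substantive point is that the obstacle you flag---naturality of the MV boundary under $\gamma$---is precisely the content of the paper's argument, and the paper does not appeal to a general naturality principle. Instead it unpacks the MV boundary concretely: by definition $\bdMV$ arises from the inclusion $\roeAlgLoc(\R)\tens\contZ(\R)\hookrightarrow\pathsCstarMV\bigl(\roeAlgLoc(\R);\roeAlgLoc(\Rgeq\subset\R),\roeAlgLoc(\Rleq\subset\R)\bigr)$ followed by the $\KTh$-isomorphism $\roeAlgLoc(\{0\}\subset\R)\hookrightarrow\pathsCstarMV$. The paper then writes $\pathsCstarMV$ as a sum of two ideals inside $\roeAlgLoc(\R)\tens\cont(\Rext)$, observes that $\gamma$ extends to an asymptotic morphism $\bar\gamma$ on the ambient algebra taking each of these ideals into $\cptOps(\Lp^2(\R))$ (this is exactly the support compatibility you anticipated having to check), and hence restricts to $\tilde\gamma\colon\pathsCstarMV\asympMorphism\cptOps$. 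The resulting commutative triangle with $\gamma$, $\tilde\gamma$, and $\ev\colon\roeAlgLoc(\{0\}\subset\R)\to\cptOps$ shows directly that $\bdMV(\IndL(\clnDiracOp_\R))$ corresponds to $\gamma_\ast(\IndL(\clnDiracOp_\R))=\alpha_\ast(b_1)=1$. So the paper's proof \emph{is} your naturality verification, done by hand on the level of the path algebra; your alternative hands-on route via cutoffs and quasi-homomorphisms is unnecessary once one works with $\pathsCstarMV$ directly. Note also that the paper does not separately establish that $\bdMV$ is an isomorphism here---that comes out for free from the diagram since $\gamma_\ast$, $\tilde\gamma_\ast$, and $\ev_\ast$ are all isomorphisms.
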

\begin{proof}
   In our setup, we identify $\IndL(\clnDiracOp_\R) \in \KTh_1(\roeAlgLoc(\R))$ with $[\varphi_{\clnDiracOp_\R}] \KOThProd b  \in \KOThGr_0(\roeAlgLoc(\R) \tens \contZ(\R))$, where $b = b_1 \in \KOThGr_0\left(\contZ(\R) \tensGr \cliffAlgDual_1 \right)$ is the dual Dirac element, see~\cref{subsec:bott}, and $\varphi_{\clnDiracOp_\R} \colon \contZGr \to \roeAlgLoc(\R) \tensGr \cliffAlg_1$ is defined as in \cref{subsec:localIndexClasses}.
   Now observe that the following diagram commutes:
   \begin{equation*}
    \begin{tikzcd}[row sep=small, column sep=large]
     \KOThGr_{\KPh}\left(\contZ(\R) \tensGr \cliffAlgDual_1 \right) \rar{[\varphi_{\clnDiracOp_\R}] \KOThProd \_} \dar{\alpha_\KPh} & \KOThGr_\KPh \left( \roeAlgLoc(\R) \tens \contZ(\R) \right) \dlar{\gamma_\KPh} \\
      \KOThGr_\KPh(\RReal).
    \end{tikzcd}
   \end{equation*}
 Here $\alpha \colon \contZGr \tensGr \contZ(\R) \asympMorphism \cptOps \tensGr \cliffAlg_1$ and $\gamma \colon \roeAlgLoc(\R) \tens \contZ(\R) \asympMorphism \cptOps$ are the asymptotic morphisms mentioned in \cref{subsec:bott,rem:relationToETheoryFundamentalClass}.
 In particular, we obtain $\gamma_\KPh(\IndL(\clnDiracOp_\R)) = \alpha_\KPh(b) = 1$.
 
 The Mayer--Vietoris boundary map is induced by the inclusion
 \begin{equation*}
     \roeAlgLoc(\R) \tens \contZ(\R) = \contZ(\R, \roeAlgLoc(\R)) \hookrightarrow \pathsCstarMV\left(\roeAlgLoc(\R); \roeAlgLoc(\Rgeq \subset \R), \roeAlgLoc(\Rleq \subset \R )\right).
 \end{equation*}
 We will use the symbol $\pathsCstarMV$ as a shorthand to denote the latter $\Cstar$-algebra.
 We observe that it is equal to the following sum of ideals inside $\roeAlgLoc(\R) \tens \cont(\Rext)$,
 \begin{equation}
 \pathsCstarMV =  \roeAlgLoc(\Rgeq \subset \R) \tens \contZ([- \infty, \infty)) + \roeAlgLoc(\Rleq \subset \R )  \tens \contZ( (-\infty,\infty] ). \label{eq:pathsAsSumOfIdeals}
 \end{equation}
 Moreover, $\gamma$ extends to an asymptotic morphism
 \begin{equation*}
    \bar{\gamma} \colon  \roeAlgLoc(\R) \tens \cont(\Rext) \asympMorphism \bndOps(\Lp^2(\R)), \quad \bar{\gamma}_t(L \tens f) = L(t) f,
 \end{equation*}
 which, by \labelcref{eq:pathsAsSumOfIdeals}, restricts to an asymptotic morphism $\tilde{\gamma} \colon \pathsCstarMV \asympMorphism \cptOps(\Lp^2(\R))$.
 We obtain the following commutative diagram of (asymptotic) morphisms:
    \begin{equation*}
      \begin{tikzcd}[row sep=small]
        \roeAlgLoc(\R) \tens \contZ(\R) \rar[hook] \drar[dashed]{\gamma} & \pathsCstarMV \dar[dashed]{\tilde{\gamma}} & \roeAlgLoc(\{0\} \subset \R) \lar[hook] \dlar[dashed]{\ev} \\
		& \cptOps\left(\Lp^2(\R)) \right),
      \end{tikzcd}
   \end{equation*}
 All asymptotic morphisms above induce isomorphisms on $\KOTh$-theory.
 By definition, the Mayer--Vietoris boundary map is the the map $\bdMV \colon \KOThGr_1(\roeAlgLoc(\R)) = \KOThGr_0(\roeAlgLoc(\R) \tens \contZ(\R)) \to \KOThGr_0(\roeAlgLoc(\{0\} \subset \R)) \iso \KOThGr_0(\roeAlgLoc(\{0\}) \iso \KOTh_0(\RReal)$ induced by the upper row of the diagram composed with $\ev$.
 This finishes the proof since we already know that $\gamma_\KPh(\IndL(\clnDiracOp)) = 1$.
\end{proof}

\begin{thm}[Suspension isomorphism] \label{thm:generalBoundaryOfDiracIsDirac}
   Let $X$ be a proper metric space endowed with a free and proper $\Gamma$-action and $Z \subseteq X$ some closed $\Gamma$-invariant subset.
   Then the map
   \begin{equation*}
     \KTh_\KPh(\roeAlgLocP{Z}(X)^\Gamma) \to \KTh_{\KPh+1}(\roeAlgLocP{Z \times \R}(X \times \R)^\Gamma), \quad x \mapsto x \boxtimes \IndL(\clnDiracOp_\R),
   \end{equation*}
   is an isomorphism.
   Its inverse is given by the Mayer--Vietoris boundary map
   \begin{equation*}
   \bdMV \colon \KOTh_{\KPh+1}(\roeAlgLocP{Z \times \R}(X \times \R)^\Gamma) \to \KOTh_\KPh(\roeAlgLocP{Z}(X)^\Gamma)
   \end{equation*}
   associated to the cover $X \times \R = X \times \Rgeq \cup X \times \Rleq$.
   
   In particular, if $(X, g)$ is a complete Riemannian spin manifold with upsc outside~$Z$, then
   \begin{equation*}
   \bdMV \left( \IndLP{Z \times \R}^\Gamma(\clnDiracOp_{X \times \R}^{g\oplus \D{t}^2}) \right) = \IndLP{Z}^\Gamma(\clnDiracOp_X^{g}).
   \end{equation*}
\end{thm}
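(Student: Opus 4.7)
The strategy is to show that $\bdMV$ and the suspension map $x \mapsto x \secProd \IndL(\clnDiracOp_\R)$ are mutually inverse isomorphisms. The argument has two ingredients: first, that $\bdMV$ is a left inverse to the suspension map (via compatibility of the external product with boundary maps, together with the one-dimensional computation of \cref{lem:boundaryOfDiracIsOne}), and second, that the two ``half-line'' terms in the relevant Mayer--Vietoris long exact sequence vanish (via a flasqueness/Eilenberg swindle on $\Rgeq$ and $\Rleq$), which upgrades split-injectivity to a bijection.

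For the first ingredient, observe that the cover $X \times \R = (X \times \Rgeq) \cup (X \times \Rleq)$ is the product of the trivial cover of $X$ with the cover $\R = \Rgeq \cup \Rleq$, so the Mayer--Vietoris boundary map is compatible with the external product of \cref{subsec:externalProductLocAlgebras} (as noted at the end of \cref{subsec:longExactSeq}). Hence
\begin{equation*}
    \bdMV(x \secProd \IndL(\clnDiracOp_\R)) = x \secProd \bdMV(\IndL(\clnDiracOp_\R)) = x \secProd 1 = x,
\end{equation*}
using \cref{lem:boundaryOfDiracIsOne} and the canonical identification $\roeAlgLocP{Z \times \{0\}}(X \times \{0\} \subset X \times \R)^\Gamma \iso \roeAlgLocP{Z}(X)^\Gamma$ from \cref{lem:idealsOfSubspace}. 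This shows the suspension map is split-injective.

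For the second ingredient, set $A = \roeAlgLocP{Z \times \R}(X \times \R)^\Gamma$, and let $A_+$ and $A_-$ be the ideals $\roeAlgLocP{Z \times \Rgeq}(X \times \Rgeq \subset X \times \R)^\Gamma$ and $\roeAlgLocP{Z \times \Rleq}(X \times \Rleq \subset X \times \R)^\Gamma$, respectively. By \cref{lem:idealsOfSubspace}, it suffices to prove that $\KTh_\KPh(\roeAlgLocP{Z \times \Rgeq}(X \times \Rgeq)^\Gamma) = 0$, and likewise for $\Rleq$. The shift $s \colon X \times \Rgeq \to X \times \Rgeq$, $(x,r) \mapsto (x, r+1)$, is a $\Gamma$-equivariant isometric embedding that preserves both the propagation control and the partial support condition near $Z \times \Rgeq$, so after implementing it by a family of covering isometries (in the sense of \cref{subsec:functoriality}) and iterating, the standard Eilenberg swindle yields the desired vanishing. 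Combined with the first ingredient, the long exact Mayer--Vietoris sequence forces $\bdMV$ to be injective, so both maps are inverse isomorphisms.

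For the final assertion, the product formula of \cref{thm:productFormula} applied with $X_1 = X$, $Z_1 = Z$, $X_2 = \R$ and $g_2 = \D{t}^2$ yields
\begin{equation*}
\IndLP{Z \times \R}^\Gamma(\clnDiracOp_{X \times \R}^{g \oplus \D{t}^2}) = \IndLP{Z}^\Gamma(\clnDiracOp_X^g) \secProd \IndL(\clnDiracOp_\R),
\end{equation*}
and applying $\bdMV$ recovers $\IndLP{Z}^\Gamma(\clnDiracOp_X^g)$ by the first part. The main technical obstacle I anticipate is making the Eilenberg swindle rigorous at the level of the \emph{partial} localization algebra rather than the plain Roe algebra: one must arrange the covering isometries for $s$ to be uniformly continuous in the localization parameter $t \in [1,\infty)$ while simultaneously respecting the $Z \times \Rgeq$-support condition at $t = 1$. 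However, since $s$ is a genuine isometry of the underlying metric space, this adaptation is routine.
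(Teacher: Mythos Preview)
Your proposal is correct and follows essentially the same approach as the paper: an Eilenberg swindle on the half-lines to make $\bdMV$ an isomorphism, compatibility of the external product with the Mayer--Vietoris boundary together with \cref{lem:boundaryOfDiracIsOne} to verify $\bdMV(x \secProd \IndL(\clnDiracOp_\R)) = x$, and then \cref{thm:productFormula} for the final assertion. The only cosmetic difference is that the paper establishes the swindle first (so that $\bdMV$ is immediately an isomorphism rather than merely injective), and it spells out the compatibility of $\bdMV$ with the external product via an explicit commutative diagram rather than citing it verbally.
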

\begin{proof}
  A standard Eilenberg swindle shows that $\KTh_\KPh(\roeAlgLocP{Z \times \Rgeq}(X \times \Rgeq)^\Gamma)$ and $\KTh_\KPh(\roeAlgLocP{Z \times \Rleq}(X \times \Rleq)^\Gamma)$ vanish in all degrees.
  As a consequence, the Mayer--Vietoris boundary map $\bdMV$ from the statement of the theorem is an isomorphism.
  Therefore, it suffices for the first statement to show that $\bdMV(x \boxtimes \IndL(\clnDiracOp_\R)) = x$ for all $x \in \KOTh_n(\roeAlgLocP{Z}(X)^\Gamma)$.
  Let $W := X \times \R$.
  From the discussion of Mayer--Vietoris sequences in \cref{subsec:longExactSeq}, we obtain a commutative diagram as follows.
  \begin{equation*}
  \scalebox{0.715}{
    \begin{tikzcd}[row sep=small, ampersand replacement=\&]
      \KOTh_n\left(\roeAlgLocP{Z}(X)^\Gamma\right)\tens \KOTh_1\left(\roeAlgLoc(\R) \right) \rar[']{\id \tens \bdMV} \dar{\KOThProd} \& \KOTh_n\left(\roeAlgLocP{Z}(X)^\Gamma\right) \tens \KOTh_0\left(\roeAlgLoc(\{0\} \subset \R)\right) \dar{\KOThProd} \& \KOTh_n\left(\roeAlgLocP{Z}(X)^\Gamma\right) \tens \KOTh_0\left(\roeAlgLoc(\{0\})\right)  \lar{\iso} \dar{\KOThProd}\\
      \KOTh_{n+1}\left(\roeAlgLocP{Z}(X)^\Gamma \tensGr \roeAlgLoc(\R) \right) \rar{\bdMV} \dar{\tensMap_\KPh} \& \KOTh_n\left(\roeAlgLocP{Z}(X)^\Gamma \tensGr \roeAlgLoc(\{0\} \subset \R) \right) \dar{\tensMap_\KPh} \& \KOTh_n\left(\roeAlgLocP{Z}(X)^\Gamma \tensGr \roeAlgLoc(\{0\}) \right) \lar{\iso} \dar{\tensMap_\KPh} \\
      \KOTh_{n+1}\left(\roeAlgLocP{Z \times \R}(W)^\Gamma\right) \rar{\bdMV} \& \KOTh_{n}\left(\roeAlgLocP{Z \times \{0\}}(X \times \{0\} \subset W)^\Gamma\right) \& \KOTh_n\left(\roeAlgLocP{Z}(X)^\Gamma \right) \lar{\iso} 
    \end{tikzcd}}
  \end{equation*}
  In view of this diagram, it is enough that $\bdMV(\IndL(\clnDiracOp_\R)) \in \KOTh_0 \left( \roeAlgLoc( \{0\} \subset \R ) \right)$ agrees with the unit element $1 \in \KOTh_0(\RReal) \iso \KOTh_0\left(\roeAlgLoc(\{0\})\right) \iso \KOTh_0\left(\roeAlgLoc(\{0\} \subset \R)\right)$, which is precisely the content of \cref{lem:boundaryOfDiracIsOne}.
     Thus $\bdMV$ and taking the external product with $\IndL(\clnDiracOp_\R)$ are mutually inverse isomorphisms.

    By \cref{thm:productFormula}, we have $\IndLP{Z \times \R}( \clnDiracOp_{X \times \R}^{g \oplus \D{t}^2} ) = \IndLP{Z}^\Gamma(\clnDiracOp_X^{g}) \boxtimes \IndL(\clnDiracOp_\R)$. 
    Hence the second statement is a direct consequence of the first.
\end{proof}

In particular, we obtain the following statement for $\rho$-invariants, which was an essential technical step in \cite{PS14Rho} needed for the proof of the secondary partitioned manifold index theorem.
\begin{cor} \label{cor:boundaryOfRhoIsRho}
   Let $X$ be a spin manifold endowed with a free and proper $\Gamma$-action and $g$ a complete $\Gamma$-invariant Riemannian metric of upsc on $X$.
   Then
   \begin{equation*}
   \bdMV(\rho^\Gamma(g \oplus \D{t}^2)) = \rho^\Gamma(g),
   \end{equation*} where $\bdMV \colon \KOTh_{n+1}(\roeAlgLocZ(X \times \R)^\Gamma) \to \KOTh_n(\roeAlgLocZ(X)^\Gamma)$ is the Mayer--Vietoris boundary map associated to the cover $X \times \R = X \times \Rgeq \cup X \times \Rleq$.
\end{cor}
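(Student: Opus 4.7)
The plan is to obtain this corollary as a direct specialization of the suspension isomorphism \cref{thm:generalBoundaryOfDiracIsDirac} to the case $Z = \emptyset$. Since the Riemannian metric $g$ on $X$ is already assumed to have uniformly positive scalar curvature everywhere, the product metric $g \oplus \D{t}^2$ on $X \times \R$ likewise has upsc everywhere: the scalar curvature of a Riemannian product of a flat factor with $(X,g)$ equals $\scalCurv_g$ pulled back via the projection, hence is uniformly bounded below by the same positive constant. Therefore the hypothesis of \cref{thm:generalBoundaryOfDiracIsDirac} is met with $Z = \emptyset$ on both $X$ and on $X \times \R$, where we use the identification $\emptyset \times \R = \emptyset$.

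By \cref{defi:secondaryInvariant}, the $\rho$-invariant of an upsc metric is by definition the partial secondary local index class with empty subset, that is,
\begin{equation*}
 \rho^\Gamma(g) = \IndLP{\emptyset}^\Gamma(\clnDiracOp_X^g) \in \KOTh_n(\roeAlgLocP{\emptyset}(X)^\Gamma) = \KOTh_n(\roeAlgLocZ(X)^\Gamma),
\end{equation*}
and analogously $\rho^\Gamma(g \oplus \D{t}^2) = \IndLP{\emptyset}^\Gamma(\clnDiracOp_{X \times \R}^{g \oplus \D{t}^2})$ lives in $\KOTh_{n+1}(\roeAlgLocZ(X \times \R)^\Gamma)$. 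The algebras and the Mayer--Vietoris boundary maps appearing in \cref{thm:generalBoundaryOfDiracIsDirac} and in the statement of the corollary therefore literally coincide under these identifications.

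Applying \cref{thm:generalBoundaryOfDiracIsDirac} now yields
\begin{equation*}
 \bdMV\bigl(\rho^\Gamma(g \oplus \D{t}^2)\bigr) = \bdMV\bigl(\IndLP{\emptyset}^\Gamma(\clnDiracOp_{X \times \R}^{g \oplus \D{t}^2})\bigr) = \IndLP{\emptyset}^\Gamma(\clnDiracOp_X^g) = \rho^\Gamma(g),
\end{equation*}
as required. There is no real obstacle here: all the work was done in proving the suspension isomorphism, which in turn reduced via the product formula \cref{thm:productFormula} and the compatibility of $\secProd$ with $\bdMV$ to the base case \cref{lem:boundaryOfDiracIsOne}. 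The only point to verify, namely that the empty-set versions of the localized algebras agree with the $L,0$ versions, is immediate from \cref{defi:locAlgebras}.
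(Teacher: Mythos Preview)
Your proof is correct and takes essentially the same approach as the paper: the corollary is stated immediately after \cref{thm:generalBoundaryOfDiracIsDirac} without a separate proof, as it is precisely the specialization to $Z = \emptyset$. Your additional remarks that $g \oplus \D{t}^2$ inherits upsc and that $\roeAlgLocP{\emptyset} = \roeAlgLocZ$ make the specialization explicit, which is fine.
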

There is an analogous result for the relative index.
\begin{cor} \label{cor:boundaryOfRelIsRel}
  Let $X$ be an $n$-dimensional spin manifold with a free and proper $\Gamma$-action. 
  Let $g_0, g_1$ be two complete $\Gamma$-invariant Riemannian metrics of upsc on $X$ which are in the same uniform equivalence class.
   Then
   \begin{equation*}
   \bdMV(\IndRel^\Gamma(g_0 \oplus \D{t}^2, g_1 \oplus \D{t}^2)) = \IndRel^\Gamma(g_0, g_1),
   \end{equation*}
   where $\bdMV \colon \KOTh_{n+2}(\roeAlg(X \times \R)^\Gamma) \to \KOTh_{n+1}(\roeAlg(X)^\Gamma)$ is the Mayer--Vietoris boundary map associated to the cover $X \times \R = X \times \Rgeq \cup X \times \Rleq$.
\end{cor}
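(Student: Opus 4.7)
The strategy is to recognize this corollary as the Roe-algebra analogue of \cref{cor:boundaryOfRhoIsRho}, and to deduce it formally from three already-established ingredients: the product formula for the relative index (\cref{cor:productFormulaRelIndex}), compatibility of the Mayer--Vietoris boundary map with external products (\cref{subsec:longExactSeq}), and an evaluated version of \cref{lem:boundaryOfDiracIsOne}. First, I would apply \cref{cor:productFormulaRelIndex} to the factorization $X \times \R$ with $\Gamma_2 = 1$, $X_2 = \R$, $g_{1,i} = g_i$, to obtain
\[
   \IndRel^\Gamma(g_0 \oplus \D{t}^2, g_1 \oplus \D{t}^2) = \IndRel^\Gamma(g_0, g_1) \secProd \Ind(\clnDiracOp_\R) \in \KOTh_{n+2}(\roeAlg(X \times \R)^\Gamma).
\]

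Next I would invoke the compatibility of Mayer--Vietoris with products: applying the abstract diagram from \cref{subsec:longExactSeq} to $A = \roeAlg(\R)$ with ideals $I_\pm = \roeAlg(\R_{\pm} \subset \R)$ and $B = \roeAlg(X)^\Gamma$, together with the identifications from \cref{lem:idealsOfSubspace} (for Roe algebras) and \cref{subsec:externalProductLocAlgebras}, yields
\[
   \bdMV\bigl( \IndRel^\Gamma(g_0, g_1) \secProd \Ind(\clnDiracOp_\R) \bigr) = \IndRel^\Gamma(g_0, g_1) \secProd \bdMV\bigl(\Ind(\clnDiracOp_\R)\bigr),
\]
where the right-hand $\bdMV$ lands in $\KOTh_0(\roeAlg(\{0\} \subset \R)) \iso \KOTh_0(\RReal)$ and the product with the unit is the canonical isomorphism $\KOTh_{n+1}(\roeAlg(X)^\Gamma) \tens \KOTh_0(\RReal) \iso \KOTh_{n+1}(\roeAlg(X)^\Gamma)$.

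It therefore remains to verify that $\bdMV(\Ind(\clnDiracOp_\R)) = 1 \in \KOTh_0(\RReal)$. This is the Roe-algebra version of \cref{lem:boundaryOfDiracIsOne}, and I would deduce it by naturality: the surjection $\ev_1 \colon \roeAlgLoc(\R) \to \roeAlg(\R)$ sends the localization algebra Mayer--Vietoris sequence to the Roe algebra Mayer--Vietoris sequence (since $\ev_1$ preserves the relevant ideals associated to $\R_\pm$ and to $\{0\}$). Because $(\ev_1)_\KPh \IndL(\clnDiracOp_\R) = \Ind(\clnDiracOp_\R)$ by definition, applying $\ev_1$ to the identity of \cref{lem:boundaryOfDiracIsOne} immediately gives $\bdMV(\Ind(\clnDiracOp_\R)) = 1$. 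Combining the three steps yields $\bdMV(\IndRel^\Gamma(g_0 \oplus \D{t}^2, g_1 \oplus \D{t}^2)) = \IndRel^\Gamma(g_0, g_1)$, as claimed.

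The only mildly delicate point is the bookkeeping that ensures the naturality square between the localization and Roe Mayer--Vietoris sequences indeed commutes at the level of $\KTh$-theory; but this is automatic once one notes that $\ev_1$ maps the auxiliary algebra $\pathsCstarMV$ used to construct $\bdMV$ in the localization setting to its Roe-algebra counterpart, compatibly with the short exact sequences that define the boundary map.
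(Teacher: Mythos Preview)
Your proof is correct and takes a slightly different route from the paper's. The paper unwinds the definition of the relative index to work with the partial secondary invariant $\IndLP{[0,1]\times X\times\R}^\Gamma(\clnDiracOp_{\R\times X\times\R}^{h\oplus\D{t}^2})$, applies \cref{thm:generalBoundaryOfDiracIsDirac} at that level, and then pushes the identity down through $(\ev_1)_\KPh$ and $(\proj_X)_\KPh$ using naturality of the Mayer--Vietoris boundary. You instead invoke \cref{cor:productFormulaRelIndex} (which has already absorbed those push-down steps) and then apply the Mayer--Vietoris compatibility with external products directly at the Roe-algebra level, reducing to $\bdMV(\Ind(\clnDiracOp_\R))=1$ via $\ev_1$ and \cref{lem:boundaryOfDiracIsOne}. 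The two arguments traverse the same naturality square in opposite orders; yours is a bit more economical because it reuses \cref{cor:productFormulaRelIndex} as a black box rather than re-opening the definition. One minor bookkeeping point: the compatibility diagram displayed in \cref{subsec:longExactSeq} has $\bdMV$ acting on the first tensor factor, whereas in your argument it must act on the second ($\R$) factor; this is harmless since the symmetric diagram holds by the same construction (or by graded commutativity of $\KThProd$), but it is worth saying explicitly, together with the fact that $\tensMap_1$ carries $\roeAlg(X)^\Gamma\tensGr\roeAlg(\R_\pm\subset\R)$ into $\roeAlg(X\times\R_\pm\subset X\times\R)^\Gamma$, so that $(\tensMap_1)_\KPh$ intertwines the two Mayer--Vietoris boundaries.
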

\begin{proof}
  We work with $W = \R \times X \times \R$ and consider a metric $h \oplus \D{t}^2$ on $W$ such that $h$ on $\R \times X$ restricts to $\D{s}^2 \oplus g_0$ on $(-\infty, 0] \times X$ and to $\D{s}^2 \oplus g_1$ on $[1,\infty) \times X$.
  Thus we obtain a localized index $\Ind^{\Gamma}_{[0,1] \times X \times \R}(\clnDiracOp_W^{h \oplus \D{t}^2}) = (\ev_1)_* \IndLP{[0,1] \times X \times \R}^{\Gamma}(\clnDiracOp_W^{h \oplus \D{t}^2}) \in \KTh_{n+2}(\roeAlg([0,1] \times X \times \R \subset W)^\Gamma)$, which is used to construct the relative index  $\IndRel^\Gamma(g_0 \oplus \D{t}^2, g_1 \oplus \D{t}^2)$ according to \cref{subsec:twoPSCrelIndex}.
  Consequently, by exploiting compatibility of the different Mayer--Vietoris sequences, it suffices to observe that 
  \begin{equation*}
    \bdMV  \left( \IndLP{[0,1] \times X \times \R}^\Gamma(\clnDiracOp_W^{h \oplus \D{t}^2}) \right) = \IndLP{[0,1] \times X }^\Gamma(\clnDiracOp_{\R \times X}^{h}),
  \end{equation*}
  which has been proved in \cref{thm:generalBoundaryOfDiracIsDirac}.
\end{proof}
A complete Riemannian manifold $Y$ is called \emph{hypereuclidean} if it admits a proper Lipschitz map $Y \to \R^q$ of degree $1$ into some Euclidean space $\R^q$ (if this is the case, then of course $q = \dim Y$).
Furthermore, we say that $Y$ is \emph{stably hypereuclidean} if $Y \times \R^k$ is hypereuclidean for some $k \geq 0$.
\thmDistinguishStabilizeByHypereucl
\begin{proof}
 It is enough to give a proof for the case that $Y$ is hypereuclidean and $\Lambda$ is the trivial group.
 This is because for each $k \geq 0$ we have a canonical map,
 \begin{equation*}
  \KThGr_{\KPh + q}(\roeAlgLocP{Z \times Y}(X \times Y)^{\Gamma \times \Lambda})  \to \KThGr_{\KPh + q + k}(\roeAlgLocP{Z \times Y \times \R^k}(X \times Y \times \R^k)^{\Gamma})
 \end{equation*}
 which is given by forgetting $\Lambda$-equivariance and taking the external product with $\IndL(\clnDiracOp_{\R^k})$.
 
 Now suppose that $f \colon Y \to \R^q$ is a proper Lipschitz map of degree $1$.
 Then the induced map on  $\KTh$-homology takes the fundamental class of $Y$ to the fundamental class of $\R^q$.
 On the level of localization algebras, this implies that $f_* \IndL(\clnDiracOp_Y) = \IndL(\clnDiracOp_{\R^q})$.
 Thus the map $(\id_X \times f)_\KPh \colon \KThGr_{\KPh + q}(\roeAlgLocP{Z \times Y}(X \times Y)) \to \KThGr_{\KPh + q}(\roeAlgLocP{Z \times \R^q}(X \times \R^q))$ takes elements of the form $x \secProd \IndL(\clnDiracOp_Y)$ to $x \secProd \IndL(\clnDiracOp_{\R^q})$.
 So, it even suffices to prove the first claim for $Y = \R^q$.
 However, for $Y = \R^q$, the result follows from (an iterated application of) \cref{thm:generalBoundaryOfDiracIsDirac}.
    
    The second claim is a consequence of the product formula from \cref{thm:productFormula}.
\end{proof}

\subsection{Partitioned manifolds}\label{sec:partitionedManifold}
In this subsection, we establish a new partitioned manifold index theorem for partial secondary invariants which generalizes the secondary partitioned manifold index theorem~\cite[Theorem 1.22]{PS14Rho}.
\begin{defi} \label{defi:partitionedMfd}
 Let $W$ be a spin manifold endowed with a free and proper $\Gamma$-action and a $\Gamma$-invariant Riemannian metric $h$.
 Suppose that $X \subset W$ is a closed submanifold of codimension $1$ that itself is endowed with a $\Gamma$-invariant Riemannian metric $g$.
 We say $(W,h)$ is \emph{partitioned by $(X,g)$} if $W \setminus X$ has two connected components, both of which are $\Gamma$-invariant individually, and the metric $h$ restricts to $g \oplus \D{t}^2$ on a fixed tubular neighborhood $X \times (-r, r) \subset W$.
\end{defi}
 In this case, we denote the closures of the connected components of $W \setminus X$ by $W_-$ and $W_+$.
 Then $W_\pm$ are closed $\Gamma$-invariant submanifolds of $W$ with common boundary $\bd W_\pm = X$.
 
 For certain applications it is convenient to have the flexibility of working with different distance functions on partitioned manifolds than the one induced by the Riemannian metric.
 In particular, this will be important in \cref{subsec:delocAPS} to deal with manifolds with non-connected boundary.
 Thus we introduce the following technical concept.
\begin{defi}\label{defi:adaptedToPartition}
 Let $(W,h)$ be partitioned by $(X,g)$.
 We say a ($\Gamma$-invariant) distance function $d_W \colon W \times W \to \Rgeq$ is \emph{adapted to the partition} if $(W, d_h) \to (W, d_W)$ is uniformly continuous and coarse and the restriction of $d_W$ to $X$ is uniformly and coarsely equivalent to the restriction of $d_h$.
\end{defi}
Note that we do not require that $d_g$ and the restriction of $d_h$ are uniformly and coarsely equivalent.
However, under the above assumptions, the inclusion map $(X, d_g) \to (W, d_h)$ is always uniformly continuous and thus also coarse (on each connected component of $X$).
\begin{rem}[Convention]
  When working with partitioned manifolds, we will implicitly assume that a fixed distance function that is adapted to the partition has been chosen.
  Moreover, all metric concepts such as balls or neighborhoods as well as Roe and localization algebras are to be defined with respect to this fixed distance function.
\end{rem}
\begin{defi}\label{defi:admissible}
 Let $(W,h)$ be partitioned by $(X,g)$.
 Let $Z \subset W$ be a closed $\Gamma$-invariant subset.
 We say \emph{$Z$ is admissible with respect to $W_+$} if 
 \begin{enumerate}[(i)]
  \item the subset $Z$ is of product structure\footnote{That is, $Z \cap (X \times (-r,r)) = (Z \cap X) \times (-r,r)$.} on the tubular neighborhood of $X$,
  \item for every $R > 0$ there exists $S > 0$ such that
 \begin{equation}
  \nbh_R(Z \cap W_+) \cap \nbh_R(X) \subseteq \nbh_S(Z \cap X). \label{eq:admissibleZ}
 \end{equation}
 \end{enumerate}
\end{defi}

\begin{ex}
 If $W_+ = X \times [0, \infty)$ and $Z \cap W_+ = Z \cap X \times [0,\infty)$, then $Z$ is admissible with respect to $W_+$.
\end{ex}

\begin{rem}
  If $Z$ is admissible with respect to $W_+$, then the following is also true:
  For all $R > 0$ there exists $S > 0$ such that
  \begin{gather}
    \nbh_R(Z) \cap \nbh_R(W_-) \subseteq \nbh_S(Z \cap W_-),\label{eq:consequ1OfAdmissible} \\
    \nbh_R(Z \cap W_-) \cap \nbh_R(Z \cap W_+) \subseteq \nbh_S(Z \cap X). \label{eq:ZcoverCoarselyExc}
  \end{gather}
  To check this, one uses the fact that the cover of $W$ by $W_+$ and $W_-$ is coarsely excicive.
  In fact, \labelcref{eq:ZcoverCoarselyExc} says that the cover of $Z$ by $Z \cap W_-$ and $Z \cap W_+$ is also coarsely excicive.
\end{rem}

We have the following partitioned manifold index theorem for partial secondary invariants.

\thmPartitionedManifold
Before turning to the proof of \cref{thm:partitionedManifold}, we discuss a few consequences.
Firstly, taking $Z = \emptyset$, this recovers the secondary partitioned manifold index theorem of Piazza--Schick~\cite[Theorem 1.22]{PS14Rho} in all dimensions.
Moreover, we obtain a partitioned manifold index theorem for the relative index as in the corollary below.
\begin{cor} \label{thm:relativeIndexPartitionedManifold}
 For $i=0,1$, let $(W,h_i)$ be a complete Riemannian spin manifold endowed with a free and proper $\Gamma$ action and suppose that it is partitioned by $(X,g_i)$.
 Assume that $h_0$ and $h_1$ are in the same uniform equivalence class.
 Then the Mayer--Vietoris boundary map
 \begin{equation*}
  \bdMV \colon \KTh_{n+2}(\roeAlg(W)^\Gamma) \to \KTh_{n+1}(\roeAlg(X)^\Gamma),
 \end{equation*}
  associated to the cover $W = W_+ \cup W_-$ satisfies
  \begin{equation*}
   \bdMV\left( \IndRel^\Gamma(h_0,h_1) \right) = \IndRel^\Gamma(g_0, g_1).
  \end{equation*}
\end{cor}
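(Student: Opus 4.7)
The plan is to obtain both sides of the desired equality from a single application of \cref{thm:partitionedManifold} to the cylinder $\R \times W$, and then transport the resulting identity to Roe algebras via $(\ev_1)_\KPh$ and pushforward along the coarse equivalences $\proj_W \colon [0,1]\times W \to W$ and $\proj_X \colon [0,1]\times X \to X$.

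Following \cref{subsec:twoPSCrelIndex}, first choose a $\Gamma$-invariant Riemannian metric $H$ on $\R \times W$ equal to $\D{t}^2 \oplus h_0$ on $(-\infty,0] \times W$, equal to $\D{t}^2 \oplus h_1$ on $[1,\infty) \times W$, and interpolating sufficiently slowly on $[0,1] \times W$ to have upsc outside $Z := [0,1] \times W$; this is possible because $h_0$ and $h_1$ are uniformly equivalent. Since $h_i = g_i \oplus \D{s}^2$ on the common collar $X \times (-r,r) \subset W$, we may arrange $H = h \oplus \D{s}^2$ on $\R \times X \times (-r,r)$ for some interpolation $h$ between $g_0$ and $g_1$ on $\R \times X$; this $h$ is precisely the metric used to define $\IndRel^\Gamma(g_0,g_1)$ in \cref{defi:relIndex}. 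Hence $(\R\times W, H)$ is partitioned by $(\R\times X, h)$ in the sense of \cref{defi:partitionedMfd}, with half-spaces $\R \times W_\pm$. A short verification confirms that $Z$ is admissible with respect to $\R \times W_+$: the product-structure condition on the collar is automatic, and a point of $\nbh_R([0,1]\times W_+)\cap \nbh_R(\R\times X)$ has $\R$-coordinate in $(-R,1+R)$ and $W$-coordinate in $\nbh_R(X)$, so it lies in $\nbh_{R+1}([0,1]\times X)$.

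\Cref{thm:partitionedManifold} then yields
\begin{equation*}
   \bdMV\bigl(\IndLP{Z}^\Gamma(\clnDiracOp_{\R \times W}^{H})\bigr) = \IndLP{[0,1]\times X}^\Gamma(\clnDiracOp_{\R\times X}^{h}),
\end{equation*}
where $\bdMV$ is the Mayer--Vietoris boundary for the partial secondary algebras associated to the cover $\R\times W = \R\times W_+ \cup \R\times W_-$. From \cref{defi:relIndex}, $\IndRel^\Gamma(h_0, h_1)$ and $\IndRel^\Gamma(g_0, g_1)$ are the images of these classes under $(\proj_W)_\KPh(\ev_1)_\KPh$ and $(\proj_X)_\KPh(\ev_1)_\KPh$, respectively. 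Thus the claim reduces to the commutativity of the diagram
\begin{equation*}
\begin{tikzcd}[column sep=large]
\KTh_{n+2}(\roeAlgLocP{Z}(\R\times W)^\Gamma) \rar{\bdMV} \dar{(\proj_W)_\KPh(\ev_1)_\KPh} & \KTh_{n+1}(\roeAlgLocP{[0,1]\times X}(\R\times X)^\Gamma) \dar{(\proj_X)_\KPh(\ev_1)_\KPh} \\
\KTh_{n+2}(\roeAlg(W)^\Gamma) \rar{\bdMV} & \KTh_{n+1}(\roeAlg(X)^\Gamma),
\end{tikzcd}
\end{equation*}
whose bottom row is the Mayer--Vietoris boundary map for $W = W_+ \cup W_-$.

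The main technical point is this commutativity, which boils down to naturality of $\bdMV$ under $\ev_1$ and under the pushforward along the projections. In both cases the inducing maps on the algebra level respect the relevant decompositions into ideals indexed by the half-spaces of the cover: this is immediate for the $\ast$-homomorphism $\ev_1$, and for the pushforward it can be ensured by choosing families of covering isometries for $\proj_W$ whose supports respect the partition into $\R \times W_\pm$. Alternatively, one may combine \cref{rem:alternativeMVBoundary} with the standard naturality of $\KTh$-theoretic boundary maps for short exact sequences, reducing the commutativity to a routine diagram chase.
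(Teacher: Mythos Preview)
Your proof is correct and follows exactly the approach the paper indicates: apply \cref{thm:partitionedManifold} to the interpolating metric on $\R\times W$ with $Z=[0,1]\times W$, then transport the resulting identity on partial secondary invariants to the Roe-algebra level via $(\ev_1)_\KPh$ and the projections, using naturality of the Mayer--Vietoris boundary. This is precisely what the paper means by ``along the lines of the proof of \cref{cor:boundaryOfRelIsRel}''; you have simply spelled out the details of that one-line reference.
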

\begin{proof}
The corollary follows formally from \cref{thm:partitionedManifold} along the lines of the proof of \cref{cor:boundaryOfRelIsRel}.
\end{proof}
Furthermore, granted \cref{thm:partitionedManifold}, we are able to prove concordance invariance of partial secondary invariants which is still open from \cref{subsec:partialIndex}.
\begin{proof}[Proof of \cref{thm:concordanceInvariance}]
  Suppose that $g_0$, $g_1$ are two uniformly equivalent $\Gamma$-invariant metrics on $X$ and let $h$ be a metric on $W := \R \times X$ such that the conditions of \cref{defi:concordanceRelZ} are satisfied.
  We identify $X$ with (say) $\{-1\} \times X \subseteq W$ and view $(W, h)$ to be partitioned by $(X, g_0)$ with $W_- = (-\infty,-1] \times X$ and $W_+ = [-1, \infty) \times X$.
  If we take $d_W$ to be the distance function induced by $\D{t}^2 \oplus g_0$, then it follows from the assumptions on $h$ that $d_W$ a metric on $W$ adapted to the partition in the sense of \cref{defi:adaptedToPartition}.
  Thus, by \cref{thm:partitionedManifold}, we have
  \begin{equation*}
  \bdMV \IndLP{\R \times Z}^\Gamma(\clnDiracOp^h_W) = \IndLP{Z}^\Gamma(\clnDiracOp^{g_0}_X)
  \end{equation*}
  Thus it follows from \cref{thm:generalBoundaryOfDiracIsDirac} that
  \begin{equation*}
  \IndLP{\R \times Z}^\Gamma(\clnDiracOp^h_W) = \IndL(\clnDiracOp_\R) \boxtimes \IndLP{Z}^\Gamma(\clnDiracOp^{g_0}_X).
  \end{equation*}
  However, we can also identify $X$ with $\{2\} \times X \subset W$ and apply the same argument again to show that
  \begin{equation*}
  \IndLP{\R \times Z}^\Gamma(\clnDiracOp^h_W) = \IndL(\clnDiracOp_\R) \boxtimes \IndLP{Z}^\Gamma(\clnDiracOp^{g_1}_X).
  \end{equation*}
  By \cref{thm:generalBoundaryOfDiracIsDirac} these two facts imply that $\IndLP{Z}^\Gamma(\clnDiracOp^{g_0}_X) = \IndLP{Z}^\Gamma(\clnDiracOp^{g_1}_X)$.
\end{proof}

We now proceed with technical lemmas needed for the proof of \cref{thm:partitionedManifold}.
If $W$ is partitioned by $X$, then a $(W, \Gamma, \cliffAlg_{n+1})$-module $\clnXmodule$ can be restricted to a $(W_+, \Gamma, \cliffAlg_{n+1})$-module $\clnXmodule_{+} := \ran(\charFun_{W_+})$ using the projection in $\clnXmodule$ corresponding to the characteristic function of $W_+$.
This will be used implicitly in the following.
\begin{lem}\label{thm:quotientIsoPM}
 Let that $Z \subseteq W$ be a closed $\Gamma$-invariant subset that is admissible with respect to $W_+$. 
 Then for all $p \in \N$:
 \begin{enumerate}[(i)]
  \item The inclusion $\clnXmodule_{+} \subset \clnXmodule$ induces $\ast$-isomorphisms:\label{item:exciseOneHalfIso}
   \begin{multline*}
  \frac{\roeAlgLocP{Z\cap W_+}(W_+; \cliffAlg_{p})^\Gamma}{\roeAlgLocP{Z \cap X}(X \subset W_+; \cliffAlg_{p})^\Gamma}  \underset{\iso}{\overset{i}{\to}} \frac{\roeAlgLocP{Z \cap W_+}(W_+ \subset W; \cliffAlg_{p})^\Gamma}{\roeAlgLocP{Z \cap X}(X \subset W; \cliffAlg_{p})^\Gamma} \\ \underset{\iso}{\overset{j}{\to}} \frac{\roeAlgLocP{Z}(W; \cliffAlg_{p})^\Gamma}{\roeAlgLocP{Z \cap W_-}(W_- \subset W; \cliffAlg_{p})^\Gamma}. 
 \end{multline*}
 If $Z$ is also admissible with respect to $W_-$, then the inverse of these isomorphisms is induced by the expectation
 \begin{equation*}
 \Phi \colon \roeAlg(W; \cliffAlg_p)^\Gamma \to \roeAlg(W_+; \cliffAlg_p)^\Gamma, \quad T \mapsto \charFun_{W_+} T \charFun_{W_+}.
 \end{equation*}
 \item The following $\ast$-homomorphism is injective: \label{item:exciseOneHalfPartialInj}
  \begin{equation*}
  \frac{\roeAlgLocP{Z\cap W_+}(W_+; \cliffAlg_p)^\Gamma}{\roeAlgLocP{Z \cap X}(X \subset W_+; \cliffAlg_p)^\Gamma}  \hookrightarrow \frac{\roeAlgLoc(W_+; \cliffAlg_p)^\Gamma}{\roeAlgLoc(X \subset W_+; \cliffAlg_p)^\Gamma}.
 \end{equation*}
 \end{enumerate}
\end{lem}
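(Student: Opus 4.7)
The proof will amount to a careful bookkeeping with supports of operators, in which admissibility of $Z$ is invoked at a handful of key places. The first step is to observe that the Mayer--Vietoris decomposition (cf.\ \cref{lem:decompositionOfLocAlgebraMV} and the subsequent corollary for partial invariants) applies in our situation: since $W$ is a geodesic metric space, the cover $W = W_+ \cup W_-$ is uniformly excisive, and by \labelcref{eq:ZcoverCoarselyExc} the cover $Z = (Z \cap W_+) \cup (Z \cap W_-)$ is coarsely excisive. One then obtains
\begin{align*}
   \roeAlgLocP{Z \cap W_+}(W_+ \subset W; \cliffAlg_p)^\Gamma + \roeAlgLocP{Z \cap W_-}(W_- \subset W; \cliffAlg_p)^\Gamma &= \roeAlgLocP{Z}(W; \cliffAlg_p)^\Gamma, \\
   \roeAlgLocP{Z \cap W_+}(W_+ \subset W; \cliffAlg_p)^\Gamma \cap \roeAlgLocP{Z \cap W_-}(W_- \subset W; \cliffAlg_p)^\Gamma &= \roeAlgLocP{Z \cap X}(X \subset W; \cliffAlg_p)^\Gamma,
\end{align*}
so the isomorphism $j$ is immediate from the second isomorphism theorem applied to these two ideals.

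For the isomorphism $i$ I would verify surjectivity and injectivity separately. Given $T \in \roeAlgLocP{Z \cap W_+}(W_+ \subset W; \cliffAlg_p)^\Gamma$, the compression $T' := \charFun_{W_+} T \charFun_{W_+}$ is an operator on $\clnXmodule_+$ whose support is contained in that of $T$, hence $T' \in \roeAlgLocP{Z \cap W_+}(W_+; \cliffAlg_p)^\Gamma$. The remainder $T - T'$ involves at least one factor of $\charFun_{W \setminus W_+}$, and since $W_+ \cap W_- = X$ together with $T$ being supported in $\nbh_{c(t)}(W_+)$ with $c(t) \to 0$ forces the support of $T - T'$ into $\nbh_{c'(t)}(X) \times \nbh_{c'(t)}(X)$ with $c'(t) \to 0$; at $t=1$ the additional constraint of being supported near $Z \cap W_+$ combined with admissibility \labelcref{eq:admissibleZ} pins the support inside $\nbh_S(Z \cap X)$, so $T - T' \in \roeAlgLocP{Z \cap X}(X \subset W; \cliffAlg_p)^\Gamma$. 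Injectivity is easier: an operator on $\clnXmodule_+$ which, extended by zero to $\clnXmodule$, lies in $\roeAlgLocP{Z \cap X}(X \subset W; \cliffAlg_p)^\Gamma$ automatically lies in $\roeAlgLocP{Z \cap X}(X \subset W_+; \cliffAlg_p)^\Gamma$ since $X \subset W_+$. When $Z$ is also admissible with respect to $W_-$, a symmetric support computation shows that the map $\Phi(T) := \charFun_{W_+} T \charFun_{W_+}$ sends $\roeAlgLocP{Z \cap W_-}(W_- \subset W; \cliffAlg_p)^\Gamma$ into $\roeAlgLocP{Z \cap X}(X \subset W_+; \cliffAlg_p)^\Gamma$: at $t=1$ the image has support in $W_+ \cap \nbh_R(Z \cap W_-) \subseteq \nbh_R(X) \cap \nbh_R(Z \cap W_-) \subseteq \nbh_S(Z \cap X)$ by admissibility with respect to $W_-$. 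Hence $\Phi$ descends to an inverse of $j \circ i$ on the quotients.

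Part (ii) reduces to the identity $\roeAlgLocP{Z \cap W_+}(W_+; \cliffAlg_p)^\Gamma \cap \roeAlgLoc(X \subset W_+; \cliffAlg_p)^\Gamma = \roeAlgLocP{Z \cap X}(X \subset W_+; \cliffAlg_p)^\Gamma$; the inclusion $\supseteq$ is immediate, while $\subseteq$ is precisely admissibility \labelcref{eq:admissibleZ} applied to the supports at $t=1$. The main technical obstacle is not any single one of these support computations in isolation but rather verifying that the cut-off manipulations extend uniformly from the dense subalgebras of finite-propagation operators (where the support arguments are transparent) to arbitrary norm-limits in the relevant $\Cstar$-algebras; this is handled by a routine approximation argument, exploiting that conjugation with $\charFun_{W_+}$ is a contraction on $\roeAlg(W; \cliffAlg_p)^\Gamma$ and extends continuously to the localization algebras.
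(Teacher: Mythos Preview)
Your proposal is essentially correct and, for the maps $i$ and for part~(ii), follows the paper's own argument closely: the compression $T\mapsto\charFun_{W_+}T\charFun_{W_+}$ together with the admissibility condition \labelcref{eq:admissibleZ} does exactly the work you describe.

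The one genuine difference is your treatment of $j$. You obtain it in one stroke from the Mayer--Vietoris decomposition (the corollary after \cref{lem:decompositionOfLocAlgebraMV}) and the second isomorphism theorem for the two ideals $\roeAlgLocP{Z\cap W_\pm}(W_\pm\subset W;\cliffAlg_p)^\Gamma$. The paper instead proves surjectivity of $j$ by hand: it introduces a \emph{time-dependent} cutoff $\Psi(L)(t)=L(t)\,\charFun_{W_{\geq\xi(t)}}$, where $\xi$ decreases from $3R$ at $t=1$ to $0$ for $t\geq2$, and then checks via \labelcref{eq:consequ1OfAdmissible} that $\Psi(L)\in\roeAlgLocP{Z\cap W_+}(W_+\subset W)$ and $L-\Psi(L)\in\roeAlgLocP{Z\cap W_-}(W_-\subset W)$. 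Your route is shorter; the paper's is more self-contained and avoids one subtlety. Namely, the MV corollary you cite requires the cover $W=W_+\cup W_-$ to be \emph{uniformly excisive}, which you justify by ``$W$ is a geodesic metric space''. But by the paper's convention (see the remark following \cref{defi:adaptedToPartition}) all neighborhoods and algebras are taken with respect to a chosen \emph{adapted} distance function $d_W$, and $d_W$ need not be geodesic---only the map $(W,d_h)\to(W,d_W)$ is required to be ucc, which controls the wrong direction for transferring excisiveness. The paper's direct $\Psi$-construction does not need uniform excisiveness, only admissibility. In practice the adapted metrics that actually arise (e.g.\ the glued metric in \cref{rem:adaptedDistanceFunction}) do yield uniformly excisive covers, so your shortcut works there, but you should flag this hypothesis rather than derive it from geodesicity.
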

\begin{proof}
To simplify the exposition, we drop the Clifford algebra and the group $\Gamma$ from the notation during this proof.
  
  To show that the map $i$ in \labelcref{item:exciseOneHalfIso} is an isomorphism, we need,
  \begin{gather}
      \roeAlg(Z\cap W_+ \subset W_+) \cap \roeAlg(Z \cap X \subset W) = \roeAlg(Z \cap X \subset W_+) \label{eq:exciseOneHalfInj1}\\
    \roeAlgLoc(W_+) \cap \roeAlgLoc(X \subset W) = \roeAlgLoc(X \subset W_+), \label{eq:exciseOneHalfInj2}\\
    \roeAlgLocP{Z\cap W_+}(W_+) + \roeAlgLocP{Z \cap X}(X \subset W) = \roeAlgLocP{Z \cap W_+}(W_+ \subset W). \label{eq:exciseOneHalfSur}
  \end{gather}
  Indeed, consider the expectation $\Phi \colon \roeAlg(W) \to \roeAlg(W_+), T \mapsto \charFun_{W_+} T \charFun_{W_+}$.
  Note that $\supp(\Phi(T)) \subseteq \supp(T) \cap W_+ \times W_+$ for all $T \in \roeAlg(W)$.
  In particular, $\Phi$ extends to an expectation $\roeAlgLoc(W) \to \roeAlgLoc(W_+)$ which we also denote by $\Phi$.
  We have $\Phi(\roeAlg(Z \cap X \subset W)) = \roeAlg(Z \cap X \subset W_+)$ and $\Phi(\roeAlgLoc(X \subset W)) = \roeAlgLoc(X \subset W_+)$ which proves \labelcref{eq:exciseOneHalfInj1,eq:exciseOneHalfInj2}.
  To prove \labelcref{eq:exciseOneHalfSur}, let $L \in \roeAlgLocP{Z \cap W_+}(W_+ \subset W)$.
  Then one can verify that $\Phi(L) \in \roeAlgLocP{Z \cap W_+}(W_+)$ and $L - \Phi(L) \in \roeAlgLoc(X \subset W)$.
  Moreover, we have $\ev_1(L) - \Phi(\ev_1(L)) \in \roeAlg(Z \cap W_+ \subset W) \cap \roeAlg(X \subset W)$.
  In addition, using \labelcref{eq:admissibleZ}, we obtain the following equality for this intersection of Roe algebras:
  \begin{equation}
    \roeAlg(Z \cap W_+ \subset W) \cap \roeAlg(X \subset W) = \roeAlg(Z \cap X \subset W). \label{eq:admissibleZConsequence}
  \end{equation}
   Thus the decomposition $\Phi(L) + (L - \Phi(L)) = L$ proves \labelcref{eq:exciseOneHalfSur} and shows that the inverse of $i$ is always induced by $\Phi$ (even if $Z$ is not admissible with respect to $W_-$).
   
   Similarly, one can use \labelcref{eq:ZcoverCoarselyExc} to verify that $j$ is injective.
   To see that $j$ is surjective, fix $R > 0$ and let $\xi \colon [1,\infty) \to \R$ be a continuous function such that $\xi(1) = 3 R$ and $\xi(t) = 0$ for all $t \geq 2$.
   For each $r \geq 0$ set $W_{\geq r} := W_+ \setminus \nbh_r(W_-)$ and for $t \geq 1$ set $\chi_t := \charFun_{W_{\geq \xi(t)}}$.
   Instead of using $\Phi$ as above, we define
   \begin{equation*}
     \Psi \colon \roeAlgLoc(W) \to \roeAlgLoc(W), \quad L \mapsto (t \mapsto L(t) \chi_t).
   \end{equation*}
  Now let $L \in \roeAlgLocP{Z}(W)$ and suppose $\propag(L(t)) \to 0$ and that $T := \ev_1(L)$ has finite propagation $R$ as well as $\supp(T) \subseteq \nbh_R(Z) \times \nbh_R(Z)$.
   Then $\Psi(L) \in \roeAlgLoc(W_+ \subset W)$ and $L - \Psi(L) \in \roeAlgLoc(W_- \subset W)$ since $\propag(L(t)) \to 0$ and $\chi_t = \charFun_{W_+}$ for $t \geq 2$.
   Moreover, we consider $S := \ev_1(\Psi(L)) = T \charFun_{W_{\geq 3R}}$ and observe that
   \begin{equation*}
    \supp(S) \subseteq (\nbh_R(Z) \cap W_{\geq 2R}) \times (\nbh_R(Z) \cap W_{\geq 3R}).
   \end{equation*}
   Since $\nbh_R(Z) \cap W_{\geq 2R} \subseteq \nbh_R(Z \cap W_+)$ it follows that $S \in \roeAlg(Z \cap W_+ \subset W)$ and hence $\Psi(L) \in \roeAlgLocP{Z \cap W_+}(W_+ \subset W)$.
   Let $S^\prime = \ev_1(L - \Psi(L)) = T \charFun_{\nbh_{3R}(W_-)}$.
   Then
    \begin{equation*}
    \supp(S^\prime) \subseteq (\nbh_R(Z) \cap \nbh_{4R}(W_-)) \times (\nbh_R(Z) \cap \nbh_{3R}(W_-)),
   \end{equation*}
   which due to \labelcref{eq:consequ1OfAdmissible} implies that $S^\prime \in \roeAlg(Z \cap W_-)$.
   In summary, we have $\Psi(L) - L \in \roeAlgLocP{Z \cap W_-}(W_- \subset W)$ and $\Psi(L) \in \roeAlgLocP{Z \cap W_+}(W_+ \subset W)$, and this proves that $j$ is surjective, hence an isomorphism.
   
   In contrast, it is \emph{not true in general} that $\Phi(\roeAlgLocP{Z \cap W_-}(W_- \subset W)) \subseteq \roeAlgLocP{Z \cap X}(X \subset W)$ and $\Phi(\roeAlgLocP{Z}(W)) \subseteq \roeAlgLocP{Z \cap W_+}(W_+ \subset W)$.
   However, if $Z$ is also admissible with respect to $W_-$, then these statements are in fact true and one can verify that $\Phi$ is inverse to $j$ (as follows from an argument using \labelcref{eq:admissibleZ,eq:consequ1OfAdmissible,eq:ZcoverCoarselyExc} with the roles of $W_+$ and $W_-$ reversed).
  
  Statement \labelcref{item:exciseOneHalfPartialInj} is a consequence of a version of \labelcref{eq:admissibleZConsequence} restricted to $W_+$, which also follows from \labelcref{eq:admissibleZ}.
\end{proof}
The main ingredient to prove \cref{thm:partitionedManifold} is the following \enquote{swapping lemma} which implies that we can modify one half of a partitioned manifold without changing the image of the Mayer--Vietoris boundary map.
Hence it reduces the general partitioned manifold index theorem to the product situation.
This idea originated in a proof of Roe's partitioned manifold index theorem due to Higson~\cite{higson91:cobordism}.
\begin{lem}[{Compare \cite[Lemma 3.1]{higson91:cobordism}}]\label{thm:swappingLemma}
 Let $(W,h)$ and $(\widetilde{W}, \widetilde{h})$ both be partitioned by $(X,g)$ and $d_W$ and let  $d_{\widetilde{W}}$ be metrics adapted to the partitions.
 Suppose $\widetilde{W}_+ = W_+$ and that the restrictions of $h$ and $\widetilde{h}$ as well as $d_W$ and $d_{\widetilde{W}}$ agree on $W_+$, respectively.
 Let $Z \subseteq W$, $\widetilde{Z} \subseteq \widetilde{W}$ be admissible with respect to $W_+ = \widetilde{W}_+$ with $Z \cap W_+ = \widetilde{Z} \cap \widetilde{W}_+$.
 Assume that $h$ has upsc outside $Z$ and $\widetilde{h}$ has upsc outside $\widetilde{Z}$.
 Then 
   \begin{equation*}
   \bdMV\left( \IndLP{Z}^\Gamma(\clnDiracOp_W^h) \right) = \bdMV\left( \IndLP{\widetilde{Z}}^\Gamma(\clnDiracOp_{\widetilde{W}}^{\widetilde{h}}) \right).
  \end{equation*}
\end{lem}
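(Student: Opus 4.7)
The plan is to apply \cref{rem:alternativeMVBoundary} to factor $\bdMV$ through a common intermediate quotient $C^\ast$-algebra $Q$ that depends only on data on $W_+$, and then to show that the two partial secondary local index classes have the same image in $K_\KPh(Q)$ via finite propagation speed of the Dirac wave equation. Setting $I_\pm := \roeAlgLocP{Z \cap W_\pm}(W_\pm \subset W; \cliffAlg_{n+1})^\Gamma$, \cref{rem:alternativeMVBoundary} yields $\bdMV = \bd_1 \circ (\iota^{-1})_\KPh \circ (\pi_W)_\KPh$, where $\pi_W$ is the reduction modulo $I_-$ and $\iota \colon I_+/(I_+ \cap I_-) \iso \roeAlgLocP{Z}(W; \cliffAlg_{n+1})^\Gamma / I_-$ is the inclusion-induced isomorphism. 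By \cref{thm:quotientIsoPM}\textup{(i)} (which uses only admissibility of $Z$ with respect to $W_+$), the domain of $\iota$ is canonically isomorphic to
\[
Q := \frac{\roeAlgLocP{Z \cap W_+}(W_+; \cliffAlg_{n+1})^\Gamma}{\roeAlgLocP{Z \cap X}(X \subset W_+; \cliffAlg_{n+1})^\Gamma},
\]
which depends only on $W_+$ and $Z \cap W_+$. Since $\widetilde W_+ = W_+$ with matching metric and $\widetilde Z \cap \widetilde W_+ = Z \cap W_+$, the analogous reduction for $\widetilde W$ produces the identical $Q$ and the identical subsequent boundary $\bd_1$. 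It thus suffices to verify the equality in $K_\KPh(Q)$ of the images of $\IndLP{Z}^\Gamma(\clnDiracOp_W^h)$ and $\IndLP{\widetilde Z}^\Gamma(\clnDiracOp_{\widetilde W}^{\widetilde h})$ under the corresponding composite maps.

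To carry this out concretely, I would choose ample modules of the form $\clnXmodule_W = \clnXmodule_{W_+} \oplus \clnXmodule_{W_-}$ and $\clnXmodule_{\widetilde W} = \clnXmodule_{W_+} \oplus \clnXmodule_{\widetilde W_-}$ sharing the common $W_+$-piece $\clnXmodule_{W_+}$. On representatives, the composite $\iota^{-1} \circ \pi_W$ is then computed by the compression $L \mapsto \chi_{W_+} L \chi_{W_+}$ into $\roeAlgLocP{Z \cap W_+}(W_+; \cliffAlg_{n+1})^\Gamma$ followed by the quotient onto $Q$. Although compression by a non-central projection is only completely positive, for a localization-algebra element the propagation of $L(t)$ tends to zero, so the commutator $[\chi_{W_+}, L(t)]$ is supported in an ever-shrinking neighborhood of $X$; modulo $\roeAlgLocP{Z \cap X}(X \subset W_+; \cliffAlg_{n+1})^\Gamma$ the compression is therefore multiplicative, giving a genuine graded $\ast$-homomorphism that realizes the required inverse on $K$-theory. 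An identical description applies to $\widetilde W$.

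By density in $\contZGr(-\varepsilon, \varepsilon)$, it then suffices to fix $f \in \contZGr(-\varepsilon, \varepsilon)$ whose Fourier transform is compactly supported in some $(-r, r)$, and to verify that the difference
\[
\chi_{W_+}\bigl(f(t^{-1}\clnDiracOp_W^h) - f(t^{-1}\clnDiracOp_{\widetilde W}^{\widetilde h})\bigr)\chi_{W_+}
\]
defines an element of $\roeAlgLocP{Z \cap X}(X \subset W_+; \cliffAlg_{n+1})^\Gamma$. This is unit propagation speed: by the Fourier inversion formula the propagation of $f(t^{-1}\clnDiracOp)$ is at most $r/t$, so a spinor supported at distance $> r/t$ from $X$ inside $W_+$ is moved by the operator only within the region where $h \equiv \widetilde h$, and uniqueness of solutions to the Dirac wave equation forces the two compressed operators to agree on such spinors. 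Hence the displayed difference is supported in a $(3r/t)$-neighborhood of $X$, which shrinks to $X$ as $t \to \infty$. The main obstacle is the verification that the compression-and-quotient procedure realizes $\iota^{-1} \circ \pi_W$ \emph{on the nose} as a $K$-theory map even though $Z$ is only assumed admissible with respect to $W_+$; after that, unit propagation, density in $\contZGr(-\varepsilon, \varepsilon)$, and compatibility with the graded Clifford formalism of \cref{sec:localizationAlgebras} are essentially formal.
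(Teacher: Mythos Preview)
Your overall strategy is the same as the paper's---factor $\bdMV$ via \cref{rem:alternativeMVBoundary}, pass to the intrinsic quotient $Q$ depending only on $W_+$, and use unit propagation speed to match the two Dirac classes there---but there is a real gap precisely at the step you flag as the ``main obstacle''. The point is that the compression $\Phi \colon L \mapsto \chi_{W_+} L \chi_{W_+}$ does \emph{not}, in general, take $\roeAlgLocP{Z}(W; \cliffAlg_{n+1})^\Gamma$ into $\roeAlgLocP{Z\cap W_+}(W_+; \cliffAlg_{n+1})^\Gamma$: for $L$ with $\ev_1(L)$ supported in $\nbh_R(Z)$, the compressed operator $\ev_1(\Phi(L))$ is supported in $\nbh_R(Z)\cap (W_+\times W_+)$, and one would need $\nbh_R(Z)\cap W_+ \subseteq \nbh_S(Z\cap W_+)$ to conclude. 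This is exactly the \emph{admissibility with respect to $W_-$} condition (cf.\ \labelcref{eq:consequ1OfAdmissible} with the roles of $W_\pm$ reversed), which is not assumed. The proof of \cref{thm:quotientIsoPM} is explicit that $\Phi$ realizes the inverse of the isomorphism $j$ only under this extra hypothesis; without it, surjectivity of $j$ is proved with a different map $\Psi$ that is not an expectation. Your argument about $[\chi_{W_+},L(t)]$ shrinking to $X$ as $t\to\infty$ addresses multiplicativity of $\Phi$ modulo the $X$-ideal, but not this support issue at $t=1$.

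The paper's fix is to work one level up: apply \cref{thm:quotientIsoPM}\labelcref{item:exciseOneHalfIso} with $Z$ replaced by $W$ (so both admissibility conditions are trivially satisfied), obtaining isomorphisms
\[
\frac{\roeAlgLoc(W;\cliffAlg_{n+1})^\Gamma}{\roeAlgLoc(W_-\subset W;\cliffAlg_{n+1})^\Gamma}\ \overset{\Phi}{\longrightarrow}\ \frac{\roeAlgLoc(W_+;\cliffAlg_{n+1})^\Gamma}{\roeAlgLoc(X\subset W_+;\cliffAlg_{n+1})^\Gamma}\ \overset{\widetilde\Phi}{\longleftarrow}\ \frac{\roeAlgLoc(\widetilde W;\cliffAlg_{n+1})^\Gamma}{\roeAlgLoc(\widetilde W_-\subset\widetilde W;\cliffAlg_{n+1})^\Gamma}
\]
which \emph{are} realized by compression. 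Propagation speed gives $\Phi\circ\pi\circ\varphi = \widetilde\Phi\circ\widetilde\pi\circ\widetilde\varphi$ as genuine $\ast$-homomorphisms from $\contZGr$ into this larger quotient. One then observes that both $j^{-1}\circ\pi_Z\circ\varphi\circ\psi$ and $\widetilde j^{-1}\circ\widetilde\pi_{\widetilde Z}\circ\widetilde\varphi\circ\psi$ land in $Q$, and that composing with the inclusion
\[
k\colon Q \hookrightarrow \frac{\roeAlgLoc(W_+;\cliffAlg_{n+1})^\Gamma}{\roeAlgLoc(X\subset W_+;\cliffAlg_{n+1})^\Gamma}
\]
recovers $\Phi\circ\pi\circ\varphi\circ\psi$ and $\widetilde\Phi\circ\widetilde\pi\circ\widetilde\varphi\circ\psi$ respectively. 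Since $k$ is \emph{injective} by \cref{thm:quotientIsoPM}\labelcref{item:exciseOneHalfPartialInj}, the two $\ast$-homomorphisms into $Q$ coincide, and hence so do their $\KTh$-theory classes. In short: you are missing the use of \cref{thm:quotientIsoPM}\labelcref{item:exciseOneHalfPartialInj}; rather than trying to compute $\iota^{-1}\circ\pi_W$ by compression at the partial level, compare upstairs via $\Phi$ and descend via the injectivity of $k$.
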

\begin{proof}
  By \cref{thm:quotientIsoPM} \labelcref{item:exciseOneHalfIso} (applied to $Z = W$), we have isomorphisms
  \begin{equation}\label{eq:quotientIsoPMNoPsc}
  \frac{\roeAlgLoc(W; \cliffAlg_{n+1})^\Gamma}{\roeAlgLoc(W_- \subset W; \cliffAlg_{n+1})^\Gamma} \overset{\Phi}{\to} \frac{\roeAlgLoc(W_+; \cliffAlg_{n+1})^\Gamma}{\roeAlgLoc(X \subset W_+; \cliffAlg_{n+1})^\Gamma}  \overset{\widetilde{\Phi}}{\leftarrow} \frac{\roeAlgLoc(\widetilde{W}; \cliffAlg_{n+1})^\Gamma}{\roeAlgLoc(\widetilde{W}_- \subset \widetilde{W}; \cliffAlg_{n+1})^\Gamma},
 \end{equation}
 where the maps $\Phi$ and $\widetilde{\Phi}$ are induced by $T \mapsto \charFun_{W_+} T \charFun_{W_+}$.
 Consider $\varphi := \varphi_{\clnDiracOp_W^h} \colon \contZGr \to \roeAlgLoc(W; \cliffAlg_{n+1})^\Gamma$ and $\widetilde{\varphi} := \varphi_{\clnDiracOp_{\tilde{W}}^{\tilde{h}}} \colon \contZGr \to \roeAlgLoc(\widetilde{W}; \cliffAlg_{n+1})^\Gamma$ which have been defined in \cref{subsec:localIndexClasses}.
 Let $\pi \colon \roeAlgLoc(W; \cliffAlg_{n+1})^\Gamma \to \roeAlgLoc(W; \cliffAlg_{n+1})^\Gamma / \roeAlgLoc(W_- \subset W; \cliffAlg_{n+1})^\Gamma$ be the canonical quotient map and define $\widetilde{\pi}$ analogously.
 Since $\clnDiracOp_W^h = \clnDiracOp_{\widetilde{W}}^{\widetilde{h}}$ on $W_+$, it follows from propagation speed arguments (see~\cite[Corollary 10.3.4, Proposition 10.3.5]{higson-roe:analyticKHomology}, \cite[992]{PS14Rho}) that for all $f \in \contZGr$,
 \begin{equation*}
   \charFun_{W_+} f\left(\frac{1}{\mathrm{t}}\clnDiracOp_W^h\right) \charFun_{W_+} - \charFun_{W_+} f\left( \frac{1}{\mathrm{t}} \clnDiracOp_{\widetilde{W}}^{\widetilde{h}} \right) \charFun_{W_+} \in \roeAlgLoc(X \subset W_+; \cliffAlg_{n+1})^\Gamma.
 \end{equation*}
 This implies that the following $\ast$-homomorphisms are equal:
 \begin{equation*}
   \Phi \circ \pi \circ \varphi = \widetilde{\Phi} \circ \widetilde{\pi} \circ \widetilde{\varphi} \colon \contZGr \to \roeAlgLoc(W_+; \cliffAlg_{n+1})^\Gamma / \roeAlgLoc(X \subset W_+; \cliffAlg_{n+1})^\Gamma. 
 \end{equation*}
  By \cref{thm:quotientIsoPM} we also obtain isomorphisms,
  \begin{multline} \label{eq:quotientIsoPM}
  \frac{\roeAlgLocP{Z}(W; \cliffAlg_{n+1})^\Gamma}{\roeAlgLocP{Z \cap W_-}(W_- \subset W; \cliffAlg_{n+1})^\Gamma} \overset{j}{\leftarrow} \frac{\roeAlgLocP{Z\cap W_+}(W_+; \cliffAlg_{n+1})^\Gamma}{\roeAlgLocP{Z \cap X}(X \subset W_+; \cliffAlg_{n+1})^\Gamma}  \\
  \overset{\tilde{j}}{\rightarrow} \frac{\roeAlgLocP{\widetilde{Z}}(\widetilde{W}; \cliffAlg_{n+1})^\Gamma}{\roeAlgLocP{\widetilde{Z} \cap \widetilde{W}_-}(\widetilde{W}_- \subset \widetilde{W}; \cliffAlg_{n+1})^\Gamma},
 \end{multline}
 and the map 
  \begin{equation*}
  k \colon \frac{\roeAlgLocP{Z\cap W_+}(W_+; \cliffAlg_{n+1})^\Gamma}{\roeAlgLocP{Z \cap X}(X \subset W_+; \cliffAlg_{n+1})^\Gamma} \hookrightarrow \frac{\roeAlgLoc(W_+; \cliffAlg_{n+1})^\Gamma}{\roeAlgLoc(X \subset W_+; \cliffAlg_{n+1})^\Gamma}
  \end{equation*} 
   induced by inclusion is injective.
 Choose an appropriate homotopy equivalence $\psi \colon \contZGr \to \contZGr(-\varepsilon, \varepsilon)$ such that $\varphi \circ \psi$ and $\widetilde{\varphi} \circ \psi$ take values in $\roeAlgLocP{Z}(W; \cliffAlg_{n+1})^\Gamma$ and $\roeAlgLocP{\widetilde{Z}}(\widetilde{W}; \cliffAlg_{n+1})^\Gamma$, respectively.
 We also consider the quotient map
  \begin{equation*}
  \pi_Z \colon \roeAlgLocP{Z}(W; \cliffAlg_{n+1})^\Gamma \to \roeAlgLocP{Z}(W; \cliffAlg_{n+1})^\Gamma / \roeAlgLocP{Z \cap W_-}(W_- \subset W; \cliffAlg_{n+1})^\Gamma,
  \end{equation*} 
  and define $\widetilde{\pi}_Z$ analogously.
 We then have 
 \begin{equation*}
   k \circ j^{-1} \circ \pi_Z \circ \varphi \circ \psi = \Phi \circ \pi \circ \varphi \circ \psi = \widetilde{\Phi} \circ \widetilde{\pi} \circ \widetilde{\varphi} \circ \psi = k \circ \widetilde{j}^{-1} \circ \widetilde{\pi}_Z \circ \widetilde{\varphi} \circ \psi
 \end{equation*}
 and thus by injectivity of $k$,
 \begin{equation*}
 j^{-1} \circ \pi_Z \circ \varphi \circ \psi =  \widetilde{j}^{-1} \circ \widetilde{\pi}_Z \circ \widetilde{\varphi} \circ \psi \colon \contZGr \to \frac{\roeAlgLocP{Z\cap W_+}(W_+; \cliffAlg_{n+1})^\Gamma}{\roeAlgLocP{Z \cap X}(X \subset W_+; \cliffAlg_{n+1})^\Gamma}.
 \end{equation*} 
 This proves that
 \begin{align*}
  (j^{-1} \circ \pi_Z)_\KPh&\IndLP{Z}^\Gamma(\clnDiracOp_W^h)) = (\widetilde{j}^{-1} \circ \widetilde{\pi}_Z)_\KPh \IndLP{\widetilde{Z}}^\Gamma(\clnDiracOp_{\widetilde{W}}^{\tilde{h}})\\ &\in \KTh_{n+1}\left( \frac{\roeAlgLocP{Z\cap W_+}(W_+)^\Gamma}{\roeAlgLocP{Z \cap X}(X \subset W_+)^\Gamma}\right) \overset{\bd_{W_+}}{\to} \KTh_n(\roeAlgLocP{Z \cap X}(X \subset W_+)^\Gamma).
 \end{align*}
 This completes the proof of the lemma since the Mayer--Vietoris boundary maps can be computed by composing $(j^{-1} \circ \pi_Z)_\KPh$ (respectively $(\widetilde{j}^{-1} \circ \widetilde{\pi}_Z)_\KPh$) with the boundary map $\bd_{W_+}$ displayed above, see~\cref{rem:alternativeMVBoundary}.
\end{proof}

\begin{proof}[Proof of \cref{thm:partitionedManifold}]
  Applying \cref{thm:swappingLemma} twice reduces the theorem to the product situation.
  Indeed, we may consider a new partitioned manifold $\widetilde{W}$ with $\widetilde{W_+} = W_+$, $\widetilde{W}_- = X \times (-\infty, 0]$, $\widetilde{Z} \cap W_+ = Z \cap W_+$, $\widetilde{Z} \cap W_- = Z \cap X \times (-\infty, 0]$ and $h = g \oplus \D{t}^2$ on $W_-$.
  We also need to construct a distance function $d_{\widetilde{W}}$ that is adapted to the partition.
  To obtain that, we just glue the Euclidean product distance function on $X \times (-\infty,0]$ to the metric on $W_+$ (where we use the restriction of the fixed adapted metric from $W$ to $W_+$), compare also \cref{rem:adaptedDistanceFunction} below.
  Then, by \cref{thm:swappingLemma}, we may work with $\widetilde{W}$ instead of $W$.
  However, in this case $\widetilde{Z}$ is also admissible with respect to $\widetilde{W}_-$, so we can swap the roles of $W_+$ and $W_-$ in \cref{thm:swappingLemma} to also replace $W_+$ by $X \times [0, \infty)$.
  Thus we only need to consider $W = X \times \R$, $Z = Z \cap X \times \R$, $h = g \oplus \D{t}^2$, and this special case of the theorem was already proved in \cref{thm:generalBoundaryOfDiracIsDirac}.
\end{proof}

\section{Geometric applications} \label{sec:higherInvariants}

\subsection{Delocalized APS-index theorem}\label{subsec:delocAPS}
  In this subsection, we fix $W$ to be an $n+1$-dimensional spin manifold with boundary $\bd W = X$ endowed with a complete Riemannian metric $h$ and a proper and free isometric action of a discrete group $\Gamma$.
    Moreover, we assume that $X \hookrightarrow W$ is a coarse equivalence.
  Suppose that $h$ is a product metric $g_X \oplus \D{t}^2$ on a collar neighborhood of $X$.
  Let $W_\infty$ be the spin manifold obtained from $W$ by attaching an infinite cylinder $C := X \times [0,\infty)$ to the boundary $\bd W = X$.
  The Riemannian metric $h$ extends by $g_X \oplus \D{t}^2$ to a complete Riemannian metric $h_\infty$ on $W_\infty$.
  
  \begin{rem}[Adapted distance function on $W_\infty$]\label{rem:adaptedDistanceFunction}
    We also construct a distance function $d_{W_\infty}$ on $W_\infty$:
    We furnish $X$ with the restriction of the distance function $d_h$ from $W$ to $X$ and denote this distance by $d_X$.
 Then we endow $C$ with a distance function $d_C$ defined by $d_C((x,t),(x^\prime,t^\prime)) = \sqrt{d_X(x,x^\prime)^2 + |t-t|^2}$ for $x, x^\prime \in X$, $t,t^\prime \in \R$.
  We let $d_{W_\infty}$ be the distance function on $W_\infty$ that is obtained from~$d_h$ and~$d_C$ by gluing it along $X$.\footnote{More precisely, $d_{W_\infty}$ restricts to $d_h$ on $W$, to $d_C$ on $C$ and for $z \in W$, $y \in C$ we set $d_{W_\infty}(z,y) = \inf_{x \in X}\left(d_h(z,x) + d_C((x,0),y) \right)$.}
  It follows that $W_\infty$ is partitioned by $X$ and $d_{W_\infty}$ is a distance function that is adapted to the partition in the sense of \cref{defi:adaptedToPartition}.
  However, in general the distance function induced by $h_\infty$ will be coarser than $d_{W_\infty}$ (in particular, if $X$ has more than one connected component).
  In the following, we will always use the distance function $d_{W_\infty}$ to define the Roe and localization algebras on $W_\infty$.
  This is crucial so as for \cref{thm:coarseEquToBoundaryRoeAlgZero} below to hold.
  \end{rem}  
  
  \begin{lem}\label{thm:coarseEquToBoundaryRoeAlgZero}
    We have $\KTh_\KPh(\roeAlg(W_\infty)^\Gamma) = 0$.
    In particular, there exists a unique element $\rho^\Gamma(W_\infty) \in \KTh_{n+1}(\roeAlgLocZ(W_\infty)^\Gamma)$ which maps to $\IndL^\Gamma(\clnDiracOp_{W_\infty}) \in \KTh_{n+1}(\roeAlgLoc(W_\infty)^\Gamma)$.
  \end{lem}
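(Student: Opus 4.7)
The strategy is to show that $W_\infty$, equipped with the adapted distance function $d_{W_\infty}$ from \cref{rem:adaptedDistanceFunction}, is $\Gamma$-equivariantly coarsely equivalent to the cylindrical end $C = X \times [0,\infty)$, and then to conclude by observing that the Roe algebra of $C$ has vanishing $\KTh$-theory via an Eilenberg swindle.

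First I would construct a $\Gamma$-equivariant coarse equivalence between $C$ and $W_\infty$. The inclusion $\iota \colon C \hookrightarrow W_\infty$ as the cylindrical end is $\Gamma$-equivariant and, by construction of $d_{W_\infty}$, isometric onto its image. Since $X \hookrightarrow (W, d_h)$ is a $\Gamma$-equivariant coarse equivalence and the $\Gamma$-action on $W$ is free and proper, a standard argument choosing the map on a fundamental domain yields a $\Gamma$-equivariant coarse inverse $p \colon W \to X$. Define $P \colon W_\infty \to C$ by $P(z) := (p(z), 0)$ for $z \in W$ and $P|_C := \id_C$. Using that $d_{W_\infty}|_W = d_h$ together with the gluing formula for $d_{W_\infty}$, one checks that $P$ is a $\Gamma$-equivariant coarse map with $P \circ \iota = \id_C$ and $\iota \circ P$ close to $\id_{W_\infty}$. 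Hence $\iota$ and $P$ are mutually inverse coarse equivalences and induce an isomorphism $\KTh_\KPh(\roeAlg(W_\infty)^\Gamma) \iso \KTh_\KPh(\roeAlg(C)^\Gamma)$ by the coarse invariance recalled in \cref{subsec:functoriality}.

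Next I would apply a standard Eilenberg swindle to $\roeAlg(C)^\Gamma$ using the $\Gamma$-equivariant shift $s \colon C \to C$, $(x,t) \mapsto (x, t+1)$. Its iterates $s^n$ have uniform propagation $n$ in the $t$-direction and push every $d_C$-bounded subset of $C$ to infinity. Conjugation by a suitable sequence of covering isometries of $s^n$ produces a $\ast$-endomorphism $\Sigma$ of $\roeAlg(C)^\Gamma$ satisfying $\Sigma_\KPh \oplus \id_\KPh = \Sigma_\KPh$ on $\KTh$-theory, forcing $\KTh_\KPh(\roeAlg(C)^\Gamma) = 0$ and therefore $\KTh_\KPh(\roeAlg(W_\infty)^\Gamma) = 0$. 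For the \enquote{in particular} assertion, I appeal to the long exact $\KTh$-theory sequence of the short exact sequence $0 \to \roeAlgLocZ(W_\infty)^\Gamma \to \roeAlgLoc(W_\infty)^\Gamma \overset{\ev_1}{\to} \roeAlg(W_\infty)^\Gamma \to 0$; since the $\KTh$-groups of the quotient vanish in every degree, the connecting map vanishes and $\KTh_{n+1}(\roeAlgLocZ(W_\infty)^\Gamma) \to \KTh_{n+1}(\roeAlgLoc(W_\infty)^\Gamma)$ is an isomorphism, giving a unique preimage $\rho^\Gamma(W_\infty)$ of $\IndL^\Gamma(\clnDiracOp_{W_\infty})$.

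I expect the most delicate point to be the verification that $P$ is coarse with respect to $d_{W_\infty}$ rather than the Riemannian distance $d_{h_\infty}$---especially when $X$ is disconnected, where these distances can disagree substantially on the cylindrical end. This is precisely why $d_{W_\infty}$ was tailored in \cref{rem:adaptedDistanceFunction} so that it restricts to $d_h$ on $W$ (ensuring that the coarse equivalence $X \hookrightarrow W$ can be reused inside $W_\infty$) and to the product metric on $C$ (ensuring that the shift used in the swindle is controlled).
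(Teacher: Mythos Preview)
Your proposal is correct and follows essentially the same approach as the paper, which simply observes that $W_\infty$ is coarsely equivalent to the flasque space $C = X \times [0,\infty)$ (citing the standard Eilenberg swindle for flasque spaces) and then invokes exactness. You supply considerably more detail than the paper does---in particular the explicit coarse inverse $P$ and the care taken with the adapted distance $d_{W_\infty}$---and the only cosmetic point is to arrange $p|_X = \id_X$ so that $P$ is well-defined on the overlap $X = W \cap C$, which is harmless since any coarse inverse can be so modified.
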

  \begin{proof}
  By assumption, $W_\infty$ is coarsely equivalent to the flasque space $C = X \times [0,\infty)$ and thus $\KTh_\KPh(\roeAlg(W_\infty)^\Gamma)$ vanishes (see~\cite[Lemma 6.4.2]{higson-roe:analyticKHomology} and the discussion in \cref{subsec:compactToNonCompact}).
  The second statement follows due to exactness.
  \end{proof}
  
\begin{defi}\label{defi:rhoOfBordism}
  We define
  \begin{equation*}
    \rho^\Gamma(W) := \bdMV(\rho^\Gamma(W_\infty)) \in \KTh_{n}(\roeAlgLocZ(X)^\Gamma),
  \end{equation*}
  where $\rho^\Gamma(W_\infty)$ is as in \cref{thm:coarseEquToBoundaryRoeAlgZero} and we use the Mayer--Vietoris boundary map associated to the cover $W_\infty = C \cup W$.
\end{defi}
\begin{rem}
  \cref{thm:partitionedManifold} (applied to $W_\infty$ with $Z = W_\infty$) and naturality of the Mayer--Vietoris sequence show that $\rho^\Gamma(W)$ maps to $\IndL^\Gamma(\clnDiracOp_X) \in \KTh_{n}(\roeAlgLoc(X)^\Gamma)$.
 Since the $\KTh$-homology fundamental class of a spin manifold does not vanish (see~\cite[Lemma~12.2.4]{higson-roe:analyticKHomology}), the local index class $\IndL^\Gamma(\clnDiracOp_X)$ is always non-zero.
 Thus $\rho^\Gamma(W) \in \KTh_n(\roeAlgLocZ(X)^\Gamma)$ never vanishes either.
  However, $\Ind^\Gamma(\clnDiracOp_X) = (\ev_1)_\KPh \IndL^\Gamma(\clnDiracOp_X)$ vanishes by exactness.
  This constitutes a variant of bordism invariance for the coarse index, see~\cite{W12Bordism}.
\end{rem}
We may think of the element $\rho^\Gamma(W)$ as a secondary invariant which is associated to $W$ viewed as a null-bordism for $X$.

\begin{sloppypar}
If we suppose that $g_X$ has upsc, then we also have a secondary invariant $\rho^\Gamma(g_X)$ associated to $g_X$.
In the following, we will identify the difference between these two secondary invariants $\rho^\Gamma(g_X)$ and $\rho^\Gamma(W)$.
Indeed, if $g_X$ has upsc, then $W_\infty$ has upsc outside $W$.
It follows that there is a localized index $\Ind_W^\Gamma(\clnDiracOp_{W_\infty}) \in \KTh_{n+1}(\roeAlg(W~\subseteq~W_\infty))$.
We denote the image of this class under the isomorphism $\KTh_{\KPh}(\roeAlg(W~\subseteq~W_\infty)^\Gamma) \iso \KTh_\KPh(\roeAlg(X)^\Gamma)$ by $\Ind_W^\Gamma(\clnDiracOp_W) \in \KTh_{n+1}(\roeAlg(X)^\Gamma)$, compare~\cite{PS14Rho}.
\end{sloppypar}

\thmRefinedDeloalizedAPS
\begin{proof}
During this proof, we will consider $W_\infty$ as constructed above. However, for technical reasons, we now identify $X$ with $X \times \{1\} \subset C$ instead of $X \times \{0\}$ so that $X \cap W = \emptyset$.
  Using \cref{thm:coarseEquToBoundaryRoeAlgZero}, we obtain maps
  \begin{gather*}
  r \colon \KTh_{n+1}(\roeAlgLocP{W}(W_\infty)^\Gamma) \to \KTh_{n+1}(\roeAlgLoc(W_\infty)^\Gamma) \iso \KTh_{n+1}(\roeAlgLocZ(W_\infty)^\Gamma), \\
   s \colon \KTh_{n+1}(\roeAlg(W \subset W_\infty)^\Gamma) \iso \KTh_{n+2}\left(\frac{\roeAlg(W_\infty)^\Gamma}{\roeAlg(W \subset W_\infty)}\right) \overset{\delta}{\to}  \KTh_{n+1}(\roeAlgLocP{W}(W_\infty)^\Gamma),
  \end{gather*}
  where $\delta$ is the boundary map associated to the extension $0 \to \roeAlgLocP{W}(W_\infty)^\Gamma \to \roeAlgLoc(W_\infty)^\Gamma \to \roeAlg(W_\infty)^\Gamma/\roeAlg(W \subset W_\infty)^\Gamma \to 0$.
    By construction, we have $r(\IndLP{W}(\clnDiracOp_{W_\infty})) = \rho^\Gamma(W_\infty)$ and $(\ev_1)_\KPh(\IndLP{W}(\clnDiracOp_{W_\infty})) = \Ind_W^\Gamma(\clnDiracOp_{W_\infty})$.
  One can verify that
  \begin{gather*}
   r \circ i = \id, \qquad (\ev_1)_\KPh \circ s = \id, \quad r \circ s = 0.
  \end{gather*}
    Hence $s$ (respectively $r$) determines a splitting of the long exact sequence in $\KTh$-theory associated to $0 \to \roeAlgLocZ(W_\infty)^\Gamma \to \roeAlgLocP{W}(W_\infty)^\Gamma \to \roeAlg(W \subset W_\infty)^\Gamma \to 0$.
      \begin{equation*}
    \begin{tikzcd}[column sep=small]
    \KTh_{n+1}(\roeAlgLoc(W_\infty)^\Gamma) & & \KTh_{n+2}(\roeAlg(W_\infty)^\Gamma / \roeAlg(W \subset W_\infty)) \dar{\iso} \dlar[bend right=10,swap]{\delta} \\
      \KTh_{n+1}(\roeAlgLocZ(W_\infty)^\Gamma) \rar[swap]{i} \uar{\iso} \drar[bend right=20]{\bdMV^\prime} & \KTh_{n+1}(\roeAlgLocP{W}(W_\infty)^\Gamma) \dar{\bdMV} \rar[swap]{(\ev_1)_\KPh} \ular[bend right=10] \lar[bend right=10]{r} & \KTh_{n+1}(\roeAlg(W \subset W_\infty)^\Gamma) \lar[bend right=10]{s} \\
      & \KTh_n(\roeAlgLocZ(X)^\Gamma) & \KTh_{n+1}(\roeAlg(X)^\Gamma) \uar{k}[swap]{\iso} \lar{\bd}
    \end{tikzcd}
  \end{equation*}
  In particular, $i \circ r + s \circ (\ev_1)_\KPh = \id$ and thus
  \begin{equation*}
    \IndLP{W}^\Gamma(\clnDiracOp_{W_\infty}) = i(\rho^\Gamma(W_\infty)) + s(\Ind_W^\Gamma(\clnDiracOp_{W_\infty})).
  \end{equation*}
  Moreover, via an additional diagram chase one can check that
  \begin{gather*}
    \bdMV \circ i = \bdMV^\prime, \qquad
    \bdMV \circ s = \bd \circ k^{-1}.
  \end{gather*}
  From this we deduce 
 \begin{align*}
    \bdMV \IndLP{W}^\Gamma(\clnDiracOp_{W_\infty}) &= \bdMV^\prime \rho^\Gamma(W_\infty)) + \bd \circ k^{-1}(\Ind_W^\Gamma(\clnDiracOp_{W_\infty})\\
     &= \rho^\Gamma(W) + \bd (\Ind_W^\Gamma(\clnDiracOp_W)).
  \end{align*}
  Finally, we apply \cref{thm:partitionedManifold} to $W_\infty$ partitioned by $X \times \{1\}$ and with $Z = W$ to obtain $\bdMV \IndLP{W}^\Gamma(\clnDiracOp_{W_\infty}) = \rho^\Gamma(g_X)$.
  This concludes the proof.
  \end{proof}

\begin{cor}[\cite{PS14Rho,XY14Positive}]\label{thm:DelocalizedAPS}
  Let $\iota \colon X \hookrightarrow W$ denote the inclusion map.
  \begin{equation*}
   \iota_\KPh \bd(\Ind_W^\Gamma(\clnDiracOp_W)) = \iota_\KPh \rho^\Gamma(g_X).
  \end{equation*}
\end{cor}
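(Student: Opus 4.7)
The plan is to deduce the corollary directly from Theorem \ref{thm:refinedDeloalizedAPS}, which states
\begin{equation*}
   \bd(\Ind_W^\Gamma(\clnDiracOp_W)) = \rho^\Gamma(g_X) - \rho^\Gamma(W) \in \KTh_n(\roeAlgLocZ(X)^\Gamma).
\end{equation*}
Applying the pushforward $\iota_\KPh \colon \KTh_n(\roeAlgLocZ(X)^\Gamma) \to \KTh_n(\roeAlgLocZ(W)^\Gamma)$, it is therefore enough to show that the ``correction term'' $\rho^\Gamma(W)$ vanishes after pushing forward to $W$, i.e.\ $\iota_\KPh \rho^\Gamma(W) = 0$.

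To establish this, I would recall that, by \cref{defi:rhoOfBordism}, $\rho^\Gamma(W) = \bdMV(\rho^\Gamma(W_\infty))$ is the image of $\rho^\Gamma(W_\infty)$ under the Mayer--Vietoris boundary map associated to the cover $W_\infty = W \cup C$ (where $C = X \times [0,\infty)$). The Mayer--Vietoris machinery reviewed in \cref{subsec:longExactSeq} and the localization-algebra version discussed in the paragraph following \cref{lem:decompositionOfLocAlgebraMV} yield a long exact sequence
\begin{equation*}
\KTh_{n+1}(\roeAlgLocZ(W_\infty)^\Gamma) \overset{\bdMV}{\to} \KTh_n(\roeAlgLocZ(X)^\Gamma) \to \KTh_n(\roeAlgLocZ(W)^\Gamma) \oplus \KTh_n(\roeAlgLocZ(C)^\Gamma),
\end{equation*}
where, using the canonical identifications from \cref{lem:idealsOfSubspace}, the second map is $(\iota_\KPh, (\iota_C)_\KPh)$ induced by the two inclusions $X \hookrightarrow W$ and $X \hookrightarrow C$. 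Exactness immediately forces the first component $\iota_\KPh \circ \bdMV$ to be zero, so $\iota_\KPh \rho^\Gamma(W) = \iota_\KPh \bdMV(\rho^\Gamma(W_\infty)) = 0$, concluding the proof.

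The main thing to be careful about is the identification of the map in the Mayer--Vietoris sequence with the pushforward $\iota_\KPh$. This is not an obstacle but a bookkeeping point: the map in the Mayer--Vietoris sequence is induced by the inclusion of ideals $\roeAlgLocZ(X \subset W_\infty)^\Gamma \hookrightarrow \roeAlgLocZ(W \subset W_\infty)^\Gamma$, and under the canonical $\KTh$-theory isomorphisms $\roeAlgLocZ(X \subset W_\infty)^\Gamma \iso \roeAlgLocZ(X)^\Gamma$ and $\roeAlgLocZ(W \subset W_\infty)^\Gamma \iso \roeAlgLocZ(W)^\Gamma$ from \cref{lem:idealsOfSubspace}, this inclusion-induced map corresponds precisely to the map $\iota_\KPh$ functorially induced by the inclusion $\iota \colon X \hookrightarrow W$ (since the covering isometries used to define $\iota_\KPh$ can be chosen to factor through $W_\infty$).
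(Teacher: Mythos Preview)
Your proof is correct and follows essentially the same approach as the paper: apply $\iota_\KPh$ to the identity from \cref{thm:refinedDeloalizedAPS} and observe that $\iota_\KPh \rho^\Gamma(W) = 0$. The paper simply asserts that ``by construction, $\rho^\Gamma(W)$ lies in the kernel of $\iota_\KPh$'', whereas you spell this out via exactness of the Mayer--Vietoris sequence for $\roeAlgLocZ$---which is precisely the content behind the paper's phrase.
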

\begin{proof}
  By construction, $\rho^\Gamma(W)$ lies in the kernel of $\KTh_\KPh(\roeAlgLocZ(X)^\Gamma) \overset{\iota_\KPh}{\to} \KTh_\KPh(\roeAlgLocZ(W))$.
  Thus the corollary is an immediate consequence of \cref{thm:refinedDeloalizedAPS}.
\end{proof}

\subsection{Stability of higher secondary invariants on closed manifolds}
In this subsection, we discuss applications to higher secondary invariants for psc metrics on closed spin manifolds.
We begin with some preliminary definitions.
\begin{defi}[\cite{PS14Rho}]\label{defi:univStructGp}
  Let $\Gamma$ be a countable discrete group.
  Define the universal structure group for $\Gamma$ by
  \begin{equation*}
    \structureGp_{\KPh}^\Gamma := \colim_{X \subset \Efree \Gamma} \KTh_\KPh\left( \roeAlgLocZ(X)^\Gamma \right),
  \end{equation*}
  where the colimit ranges over $\Gamma$-invariant cocompact subsets of $\Efree \Gamma$, a universal space for free $\Gamma$-actions.
\end{defi}

If $X$ is a cocompact free $\Gamma$-space, then the $\KTh$-theory of the equivariant Roe algebra $\roeAlg(X)^\Gamma$ is canonically isomorphic to the $\KTh$-theory of the reduced group $\Cstar$-algebra $\CstarRed \Gamma$, see~\cite[Lemma 12.5.3]{higson-roe:analyticKHomology}.
Thus from the short exact sequence $0 \to \roeAlgLocZ(X)^\Gamma \to \roeAlgLoc(X)^\Gamma \to \roeAlg(X)^\Gamma$, we obtain a long exact sequence as follows:
\begin{equation}
  \dotsm \to \KTh_{\KPh+1}(\CstarRed \Gamma) \overset{\bd}{\to} \structureGp_\KPh^\Gamma \to \KTh_\KPh(\Bfree \Gamma) \to \KTh_\KPh(\CstarRed \Gamma) \to \dotsm.\label{eq:higsonRoeEqui}
\end{equation}

\begin{defi}
  Let $M$ be a closed $n$-dimensional spin manifold together with a continuous map $u \colon M \to \Bfree \Gamma$ (for instance, take $\Gamma = \pi_1(M)$ and $u$ classifying the universal covering).
  Let $g$ be a psc metric on $M$.
  Let $\hat{M} \to M$ be the $\Gamma$-covering classified by $u$ and denote the lift of $g$ to $\hat{M}$ by $\hat{g}$.
  
  The \emph{higher $\rho$-invariant} of $g$ is defined as follows:
  \begin{equation*}
    \rho^{u}(g) := \hat{u}_* \rho^{\Gamma}(\hat{g}) \in \structureGp_n^\Gamma,
  \end{equation*}
  where $\rho^{\Gamma}(\hat{g}) \in \KTh_n(\roeAlgLocZ(\hat{M})^\Gamma)$ is the equivariant $\rho$-class of $\hat{g}$.
   
  Similarly, we obtain a \emph{higher relative index} for two psc metrics $g_0, g_1$ on $M$,
\begin{equation*}
  \IndRel^u(g_0, g_1) := \IndRel^\Gamma(\hat{g}_0, \hat{g}_1) \in  \KTh_{n+1}(\CstarRed \Gamma) \iso \KTh_{n+1}(\roeAlg(\hat{M})^\Gamma).
\end{equation*}
\end{defi}
 If $\Gamma$ is torsion-free, then the Baum--Connes conjecture is equivalent to vanishing of $\structureGp_\ast^\Gamma$.
  As currently there is no known counter example to the Baum--Connes conjecture, we need to work with groups that have torsion in order to find non-zero higher $\rho$-invariants in $\structureGp_\ast^\Gamma$.

Given two closed spin manifolds with maps $u_i \colon M_i \to \Bfree \Gamma$, endowed with psc metrics $g_i$, $i=0,1$, we say that $(M_0, u_0, g_0)$ and $(M_1,u_1,g_1)$ are \emph{bordant} if there exists a compact spin manifold with reference map $v \colon W \to \Bfree \Gamma$ and boundary $\bd W = (-M_0) \sqcup M_1$ such that $W$ is endowed with a psc metric $h$ that is a product metric on a collar neighborhood of the boundary and agrees with $g_i$ on $M_i$.

Recall that if $g_0, g_1$ are concordant, then $\IndRel^{u}(g_0,g_1) = 0$.
It follows from \cref{thm:DelocalizedAPS} that $\bd(\IndRel^u(g_0, g_1)) = \rho^{u}(g_0) - \rho^{u}(g_1)$.
Moreover, if $(M_0, u_0, g_0)$ and $(M_1, u_1, g_1)$ are bordant, then $\rho^{u_1}(g_1) = \rho^{u_2}(g_2)$.  
Bordism classes of spin manifolds with positive scalar curvature metric as above form a group $\mathrm{Pos}_\ast^{\mathrm{spin}}(\Bfree \Gamma)$ which is part of Stolz' positive scalar curvature sequence.
In fact, in \cite{PS14Rho,XY14Positive} a map from the Stolz sequence to the Higson--Roe sequence \labelcref{eq:higsonRoeEqui} is constructed and the delocalized APS index theorem \cref{thm:DelocalizedAPS} is a central ingredient for that.

As an application of \cref{thm:distinguishStabilizeByHypereucl}, we prove the following results which say that if the higher $\rho$-invariant can distinguish two psc metrics, then it can still distinguish them after taking the product with certain aspherical manifolds.
\begin{prop} \label{prop:productWithNonPosInjective}
  Let $N$ be a closed aspherical spin manifold such that $\Lambda = \pi_1(N)$ has finite asymptotic dimension.
  Let $\Gamma$ be a countable discrete group.
  Then the map $\structureGp_\KPh^\Gamma \to \structureGp_{\KPh+q}^{\Gamma \times \Lambda}$ given by external product with the local index class  $\IndL^\Lambda(\clnDiracOp_{\tilde{N}}) \in \KTh_q(\roeAlgLoc(\tilde{N})^\Lambda)$ is split-injective.
\end{prop}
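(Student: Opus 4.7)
The plan is to combine Corollary~\ref{thm:distinguishStabilizeByHypereucl} with Dranishnikov's theorem on asymptotic dimension and a cofinality argument for the colimits defining the universal structure groups. First, since $N$ is closed aspherical, its universal cover $\widetilde{N}$ is a model for $\Efree\Lambda$; and since $\Lambda = \pi_1(N)$ has finite asymptotic dimension, $\widetilde{N}$ is stably hypereuclidean by \cite[Theorem 3.5]{dranishnikov:onHypereuclideanManifolds}. This is the input that allows us to apply Corollary~\ref{thm:distinguishStabilizeByHypereucl} with $Y = \widetilde{N}$.

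Next, I would fix the model $\Efree(\Gamma \times \Lambda) := \Efree\Gamma \times \widetilde{N}$ and observe that subsets of the form $X \times \widetilde{N}$, for $X \subset \Efree\Gamma$ a cocompact $\Gamma$-invariant subset, are cofinal among all cocompact $(\Gamma \times \Lambda)$-invariant subsets of $\Efree\Gamma \times \widetilde{N}$. Indeed, if $W$ is any such cocompact subset, then its projection onto $\Efree\Gamma$ has compact image in $\Efree\Gamma / \Gamma$ and is therefore contained in some cocompact $\Gamma$-invariant $X$, so that $W \subseteq X \times \widetilde{N}$. Consequently,
\begin{equation*}
\structureGp_{\KPh+q}^{\Gamma \times \Lambda} \;=\; \colim_{X} \KTh_{\KPh+q}\bigl(\roeAlgLocZ(X \times \widetilde{N})^{\Gamma \times \Lambda}\bigr),
\end{equation*}
where the colimit ranges over cocompact $\Gamma$-invariant $X \subseteq \Efree\Gamma$. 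In particular, the asserted product map is the colimit of the maps $\KTh_\KPh(\roeAlgLocZ(X)^\Gamma) \to \KTh_{\KPh+q}(\roeAlgLocZ(X \times \widetilde{N})^{\Gamma \times \Lambda})$ sending $x$ to $x \secProd \IndL^\Lambda(\clnDiracOp_{\widetilde{N}})$.

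Then, for each such $X$, I would apply Corollary~\ref{thm:distinguishStabilizeByHypereucl} with $Z = \emptyset$ and $Y = \widetilde{N}$ to obtain split-injectivity of this map at the level of each $X$. Passing to the colimit then yields the desired statement \emph{provided} the splittings produced by the proof of Corollary~\ref{thm:distinguishStabilizeByHypereucl} commute with the transition maps induced by inclusions $X_1 \hookrightarrow X_2$. Tracing through that proof, the splitting is a composite of (i) a forgetful map from $(\Gamma \times \Lambda)$-equivariant to $\Gamma$-equivariant $\KTh$-theory combined with an external product with $\IndL(\clnDiracOp_{\R^k})$ (which reduces the stable case to the strict hypereuclidean case), (ii) a pushforward along a fixed proper Lipschitz map $\widetilde{N} \times \R^k \to \R^{q+k}$ of degree one, and (iii) the iterated Mayer--Vietoris boundary isomorphism from Theorem~\ref{thm:generalBoundaryOfDiracIsDirac}. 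Each of these three ingredients is natural with respect to the ucc inclusions $X_1 \hookrightarrow X_2$ by the functoriality established in Section~\ref{subsec:functoriality}, so the splittings are compatible across the directed system and induce a left inverse at the colimit.

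The main obstacle is this last naturality check: one has to be careful that the forgetful operation at the level of equivariant Roe and localization algebras (induced by viewing a $(\Gamma \times \Lambda)$-equivariant ample module as a $\Gamma$-equivariant one and by the natural inclusion $\roeAlg(\cdot)^{\Gamma \times \Lambda} \hookrightarrow \roeAlg(\cdot)^\Gamma$) indeed commutes with the covering-isometries formalism used to implement the pushforward by $X_1 \hookrightarrow X_2$. This is routine but somewhat tedious and constitutes the only nontrivial verification in the argument.
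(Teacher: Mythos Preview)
Your proposal is correct and follows essentially the same approach as the paper: choose $\Efree(\Gamma\times\Lambda)=\Efree\Gamma\times\widetilde{N}$, invoke Dranishnikov's theorem to get stable hypereuclideanity of $\widetilde{N}$, apply Corollary~\ref{thm:distinguishStabilizeByHypereucl} with $Z=\emptyset$ for each cocompact $X\subset\Efree\Gamma$, and then pass to the colimit after checking that the resulting left inverses are natural in $X$. The paper dispatches the naturality check with the sentence ``it follows readily from the proof of Corollary~\ref{thm:distinguishStabilizeByHypereucl}'', whereas you spell out the three ingredients (forgetful map, pushforward along a fixed degree-one Lipschitz map, iterated Mayer--Vietoris boundary) and their naturality; this is a welcome elaboration but not a different argument.
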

\corDistinguishAfterProductWithAspherical
 In particular, these assumptions imply that $(M_0 \times N, u_0 \times \id_N, g_0 \oplus g_N)$ and $(M_1 \times N, u_1 \times \id_N, g_1 \oplus g_N)$ are not bordant.
\begin{proof}[Proof of \cref{prop:productWithNonPosInjective}]
  Since $N$ is aspherical, we may choose $\Efree(\Gamma \times \Lambda) = \Efree \Gamma \times \tilde{N}$, where $\tilde{N}$ is the universal covering of $N$.
  Then 
  \begin{equation*}
  S_\KPh^{\Gamma \times \Lambda} = \colim_{X \subset \Efree \Gamma} \KTh_{\KPh}(\roeAlgLocZ(X~\times~\tilde{N})^{\Gamma \times \Lambda}).
  \end{equation*}
  Since $\pi_1(N)$ has finite asymptotic dimension, $\tilde{N}$ is stably hypereuclidean by \cite[Theorem 3.5]{dranishnikov:onHypereuclideanManifolds}.
  For each $X \subset \Efree \Gamma$ \cref{thm:distinguishStabilizeByHypereucl} gives a map $r_X \colon \KTh_{\KPh+n}(\roeAlgLocZ(X \times \tilde{N}))^{\Gamma \times \Lambda} \to \KTh_{\KPh}(\roeAlgLocZ(X)^\Gamma)$ left-inverse to taking the external product with $\IndL^\Lambda(\clnDiracOp_{\tilde{N}})$.
  Moreover, it follows readily from the proof of \cref{thm:distinguishStabilizeByHypereucl} that these maps are natural in $X$.
  This proves the proposition.
\end{proof}
\begin{rem}
  Stability results concerning the higher relative index $\IndRel^u(g_0, g_1)$  analogous to \cref{cor:distinguishAfterProductWithAspherical} can be obtained in a similar fashion for instance by applying \cref{thm:distinguishStabilizeByHypereucl} to the partial secondary invariant $\IndLP{[0,1] \times \hat{M}}^\Gamma(\clnDiracOp_{\R \times \hat{M}}^h)$~(see~\cref{defi:relIndex}).
\end{rem}

The manifold $N$ itself does not admit a psc metric because it is aspherical and its fundamental group has finite asymptotic dimension.
In fact, if we take $N$ to be a psc manifold, then the analogue of \cref{cor:distinguishAfterProductWithAspherical} is false:
\begin{lem}
  Let $M$ and $N$ be closed spin manifolds which both admit psc metrics individually.
  Then any two product metrics which have psc on $M \times N$ are concordant.
\end{lem}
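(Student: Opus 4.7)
The key elementary observation is that for a product of Riemannian metrics, $\scalCurv_{g_M \oplus g_N}(m,n) = \scalCurv_{g_M}(m) + \scalCurv_{g_N}(n)$; taking infima separately in $m$ and $n$, if $g_M \oplus g_N$ has psc then $\inf \scalCurv_{g_M} + \inf \scalCurv_{g_N} > 0$, which forces at least one of $g_M$ or $g_N$ to itself have psc. Applying this to each $h_i = g_{M,i} \oplus g_{N,i}$ separately and possibly swapping the roles of $M$ and $N$ for $i=0$ and $i=1$ independently, we may assume that $g_{N,i}$ has psc for both $i \in \{0,1\}$.

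Fix reference psc metrics $g_M^+$ on $M$ and $g_N^+$ on $N$. By transitivity of concordance, it suffices to show that any product psc metric $g_M \oplus g_N$ with $g_N$ psc satisfies $g_M \oplus g_N \homotopyRel g_M^+ \oplus g_N^+$; then $h_0 \homotopyRel g_M^+ \oplus g_N^+ \homotopyRel h_1$. The concordance will be assembled from a chain of isotopies through psc metrics, each producing a concordance by sufficient slow-down of the isotopy parameter in the $\R$-direction, exactly as in the proof of \cref{thm:concordanceInvariance}.

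The first phase scales the $N$-factor down and interpolates the $M$-factor. The isotopy $g_M \oplus \bigl((1-s) + s\lambda\bigr)^2 g_N$ has scalar curvature bounded below by $\scalCurv_{g_M} + \scalCurv_{g_N} > 0$ for every $\lambda \in (0,1]$, since $\scalCurv_{g_N} > 0$ and $\bigl((1-s)+s\lambda\bigr)^{-2} \geq 1$; this yields $g_M \oplus g_N \homotopyRel g_M \oplus \lambda^2 g_N$. Choosing $\lambda$ small enough that $\lambda^{-2} \inf \scalCurv_{g_N}$ exceeds the uniform bound on $|\scalCurv_{(1-s) g_M + s g_M^+}|$ over the compact parameter family, the linear isotopy $\bigl((1-s) g_M + s g_M^+\bigr) \oplus \lambda^2 g_N$ is psc throughout, yielding $g_M \oplus \lambda^2 g_N \homotopyRel g_M^+ \oplus \lambda^2 g_N$; reversing the first isotopy gives $g_M^+ \oplus \lambda^2 g_N \homotopyRel g_M^+ \oplus g_N$.

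The second phase repeats the same three steps with the roles of $M$ and $N$ swapped, now using that $g_M^+$ is psc, to produce $g_M^+ \oplus g_N \homotopyRel g_M^+ \oplus g_N^+$ and complete the chain. The hypothesis that $M$ itself admits psc is used only in the middle isotopy of this second phase. The main technical ingredient is the rescaling trick, which exploits the arbitrarily large positivity reserve of $\lambda^2 g_N$ (and in the second phase, a small-$\mu$ rescaling of $g_M^+$) to absorb the bounded variation of scalar curvature along a linear path of metrics on the opposite factor; this is what makes the lemma the claimed ``simple geometric fact''.
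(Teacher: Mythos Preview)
Your proof is correct and takes essentially the same approach as the paper: both reduce to a fixed reference product psc metric via an isotopy through psc metrics built from the rescaling trick (shrink the psc factor to create a positivity reserve, linearly interpolate the other factor, then swap roles and repeat). The paper's version is terser---it absorbs your scale-down/interpolate/scale-up sequence into the single observation that $\varepsilon\,\tilde{g}_M \oplus \bigl(t g_N + (1-t)\tilde{g}_N\bigr)$ has psc for small $\varepsilon$---but the content is identical.

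One quibble: your appeal to \cref{thm:concordanceInvariance} for ``isotopy implies concordance'' is misplaced; that theorem is about invariance of the partial secondary index under concordance, not the standard slow-down argument you actually need here (which the paper simply cites as a standard fact). Also, the phrase ``swapping the roles of $M$ and $N$ for $i=0$ and $i=1$ independently'' reads oddly since $M$ and $N$ are fixed manifolds---but your subsequent reduction to the reference metric makes clear you mean that the displayed argument handles the case where $g_N$ is psc, and the symmetric argument handles the case where $g_M$ is psc, which is fine.
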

Here we say that a Riemannian metric $h$ on $M \times N$ is a \emph{product metric} if it can be written as $h = g_M \oplus g_N$ for some metrics $g_M, g_N$ on $M$, respectively $N$.
\begin{proof}
  Let $g_M$ and $g_N$ be psc metrics on $M$, respectively $N$. Denote the product metric by $h = g_M \oplus g_N$.
  Suppose that $\tilde{h} = \tilde{g}_M \oplus \tilde{g}_N$ is another product metric which has psc on $M \times N$.
  We will now show that $h$ and $\tilde{h}$ are actually isotopic as psc metrics (it is a standard fact that isotopy implies concordance).
  By assumption, $\tilde{h}$ has psc, so either $\tilde{g}_M$ or $\tilde{g}_N$ has psc and assume w.l.o.g.~that it is $\tilde{g}_M$.
  By compactness we may find $\varepsilon > 0$ such that $\varepsilon \tilde{g}_M \oplus \left( t g_N + (1-t) \tilde{g}_N\right)$ has psc for all $t \in [0,1]$.
  By inserting appropriate rescalings of $\tilde{g}_M$, this implies that $\tilde{g}_M \oplus \tilde{g}_N$ and $\tilde{g}_M \oplus g_N$ are isotopic psc metrics.
  Applying the same argument again, now $g_N$ playing the role of $\tilde{g}_M$, shows that $\tilde{g}_M \oplus g_N$ is isotopic to $g_M \oplus g_N$.
\end{proof}
In particular:
\begin{prop}\label{prop:notAProductMetric}
  Let $M$ and $N$ be closed spin manifolds which both admit psc metrics individually.
  If $h_0, h_1$ are psc metrics on $M \times N$ such that $\IndRel^v(h_0, h_1) \neq 0$ for some $v \colon M \times N \to \Bfree \Gamma$, then at least one of $h_0$ and $h_1$ is not concordant to a product metric.
\end{prop}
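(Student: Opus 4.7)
The plan is to argue by contradiction, combining the preceding lemma with the concordance invariance of the higher relative index and the fact that concordance is an equivalence relation on the set of psc metrics.

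Suppose for contradiction that \emph{both} $h_0$ and $h_1$ are concordant to product metrics. That is, there exist product metrics $h_0^\prime = g_{M,0} \oplus g_{N,0}$ and $h_1^\prime = g_{M,1} \oplus g_{N,1}$ on $M \times N$, both of psc, such that $h_i$ is concordant to $h_i^\prime$ for $i=0,1$. Since $M$ and $N$ both admit psc individually, the preceding lemma applies and gives that $h_0^\prime$ and $h_1^\prime$ are themselves concordant. Invoking transitivity of the concordance relation on psc metrics (a standard fact proved by gluing concordances along their common boundary) yields a concordance between $h_0$ and $h_1$.

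Next, I would appeal to the concordance invariance of $\IndRel^v$. By \cref{defi:relIndex} and the discussion immediately following it, the relative index of two concordant psc metrics vanishes (alternatively, this follows from \cref{thm:concordanceInvariance} applied to the auxiliary metric on $\R \times \widetilde{M \times N}$ which is used to define the relative index, with the relevant subset being empty after the concordance allows us to take $Z = \emptyset$). Hence $\IndRel^v(h_0, h_1) = 0$, contradicting the hypothesis and completing the proof.

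The main technical point to verify carefully is the transitivity of concordance combined with the concordance invariance of $\IndRel^v$; both are essentially formal, the former being a standard gluing argument and the latter being built into the definition via \cref{thm:concordanceInvariance}. No further obstacle is anticipated, since the preceding lemma handles the geometric content of the argument.
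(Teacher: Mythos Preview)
Your argument is correct and is exactly what the paper intends: the proposition is stated with ``In particular:'' immediately after the lemma and no separate proof is given, so the implicit reasoning is precisely your contradiction via the lemma, transitivity of concordance, and the vanishing of $\IndRel^v$ for concordant metrics (stated right after \cref{defi:relIndex}).
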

\subsection{From closed manifolds to non-compact complete manifolds}\label{subsec:compactToNonCompact} In this subsection, we demonstrate how the theory we have developed so far can be applied to construct examples of complete upsc metrics on non-compact manifolds which are distinguished by certain partial secondary invariants.
As input for the following constructions we will always start with psc metrics on closed manifolds which can be distinguished by the higher $\rho$-invariant.
Such examples can be obtained from the methods of Weinberger--Yu~\cite{weinberger-yu:finitePartOfOperatorKtheoryForGroupsFinitelyEmbeddable} and Xie--Yu~\cite{xie-yu:HigherRhoInvariantsAndTheModuliSpaceOfPSC}.

We start with a corollary of the secondary partitioned manifold index theorem, \cref{thm:partitionedManifold}.
\thmCorOfPartitionedMfd
\begin{proof}
  Let $\hat{W} \to W$ be the $\Gamma$-covering of $W$ corresponding to the map $W \to \Bfree \Gamma$.
  Then $(\hat{W}, \hat{h}_i)$ is partitioned by $(\hat{M}, \hat{g}_i)$, where $\hat{M}$ is the restriction of $\hat{W}$ to $M$ and $\hat{h}_i$, $\hat{g}_i$ are the corresponding lifts of the Riemannian metrics. 
  \Cref{thm:partitionedManifold} implies that $\IndLP{\hat{W}_-}^\Gamma(\clnDiracOp_{\hat{W}}^{h_0}) \neq \IndLP{\hat{W}_-}^\Gamma(\clnDiracOp_{\hat{W}}^{h_1})$, from which the corollary follows due to \cref{thm:concordanceInvariance}.
\end{proof}

Of course, this applies in particular to $W = M \times \R$.
However, using our stability result about products with hypereuclidean manifolds (\cref{thm:distinguishStabilizeByHypereucl}), we can generalize the product situation to higher codimensions, as will be explained in the following.

Recall that a proper metric space $X$ endowed with a free and proper $\Gamma$-action is called \emph{flasque} if there exists a $\Gamma$-equivariant coarse map $s \colon X \to X$ such that
\begin{enumerate}[(i)]
  \item $s$ is coarsely equivalent to $\id_X$,
  \item for every compact subset $K \subseteq X$, there exists $l_0 \in \N$ such that $s^l(X) \cap K = \emptyset$ for all $l \geq l_0$,
  \item for all $R > 0$, there exists $S > 0$ such that $d_X(s^l(x), s^l(x^\prime)) < S$ for all $l \geq 0$ and $x, x^\prime \in X$ with $d_X(x,x^\prime) < R$.
\end{enumerate}
A standard Eilenberg swindle argument shows that $\KTh_\KPh(\roeAlg(X)^\Gamma)$ vanishes in all degrees if $X$ is flasque, see~\cite[Proposition 9.4]{roe:indexTheoryCoarseGeometryTopologyOfManifolds}, \cite[Lemma 6.4.2]{higson-roe:analyticKHomology}.
It follows directly from the definition that $X \times Y$ is flasque if $X$ is flasque and $Y$ is an arbitrary proper metric space.
\begin{defi}\label{defi:coarseNegligible}
  Let $X, Y$ be proper metric spaces, both of which are endowed with a free and proper isometric $\Gamma$-action.
  We say that a $\Gamma$-equivariant coarse map $f \colon X \to Y$ is \emph{coarsely negligible} if there exist $\Gamma$-equivariant maps $f^\prime \colon X \to X^\prime$, $f^{\prime \prime} \colon X^\prime \to Y$ such that $X^\prime$ is flasque and $f$ is coarsely equivalent to $f^{\prime \prime} \circ f^\prime$.
 
  We say a subset $Z \subseteq Y$ is coarsely negligible in $Y$ if the inclusion map $Z \hookrightarrow Y$ is coarsely negligible.\footnote{Our notion of a coarsely negligible subset is less general and more geometric than the concept of a \enquote{coarsely $A$-neglibible subset} from \cite[Definition 3.9]{HPS14Codimension}.}
\end{defi}
If $f$ is coarsely negligible, then it follows from functoriality of the Roe algebra that the map $f_\KPh \colon \KTh_\KPh(\roeAlg(X)^\Gamma) \to \KTh_\KPh(\roeAlg(Y)^\Gamma)$ is zero.
Moreover, if $f \colon Z \to Y$ is coarsely negligible, then so is $\id_X \times f \colon X \times Z \to X \times Y$ for any proper metric space~$X$.

\begin{ex}
  If $Z \subseteq Y$ is a compact subset of a non-compact complete Riemannian manifold $Y$, then $Z$ is coarsely negligible in $Y$. 
  To prove this, one uses that $Z$ is contained in a bounded neighborhood of some geodesic ray, compare~\cite[Proposition 3.10]{HPS14Codimension}.
\end{ex}
\begin{ex}
  If $Y$ is an arbitrary proper metric space, then $Y \times [0,\infty)$ is coarsely negligible in $Y \times \R$ (since $Y \times [0,\infty)$ is itself flasque).
\end{ex}  

\begin{lem}\label{thm:coarselyNegligibleImpliesInj}
  Let $Z \subseteq X$ be a $\Gamma$-invariant coarsely negligible subset.
  Then the map $\KTh_\KPh(\roeAlgLocZ(X)^\Gamma) \to \KTh_\KPh(\roeAlgLocP{Z}(X)^\Gamma)$ is injective.
\end{lem}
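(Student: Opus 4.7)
The plan is to show that the boundary map
\[
  \bd \colon \KTh_{\KPh+1}(\roeAlg(Z \subset X)^\Gamma) \to \KTh_\KPh(\roeAlgLocZ(X)^\Gamma)
\]
in the long exact sequence associated to the extension
\[
  0 \to \roeAlgLocZ(X)^\Gamma \to \roeAlgLocP{Z}(X)^\Gamma \overset{\ev_1}{\to} \roeAlg(Z \subset X)^\Gamma \to 0
\]
(see \cref{defi:locAlgebras}) vanishes; by exactness this is equivalent to the desired injectivity statement.

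To accomplish this, I will compare the above extension with the standard Higson--Roe extension $0 \to \roeAlgLocZ(X)^\Gamma \to \roeAlgLoc(X)^\Gamma \to \roeAlg(X)^\Gamma \to 0$ via the inclusions $\roeAlgLocP{Z}(X)^\Gamma \hookrightarrow \roeAlgLoc(X)^\Gamma$ and $j \colon \roeAlg(Z \subset X)^\Gamma \hookrightarrow \roeAlg(X)^\Gamma$, both of which intertwine the two evaluation maps. Naturality of the connecting homomorphism then identifies $\bd$ with $\bd' \circ j_\KPh$, where $\bd' \colon \KTh_{\KPh+1}(\roeAlg(X)^\Gamma) \to \KTh_\KPh(\roeAlgLocZ(X)^\Gamma)$ is the Higson--Roe boundary of the bottom extension. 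Thus it suffices to prove $j_\KPh = 0$.

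For the final step, \cref{lem:idealsOfSubspace} provides a canonical isomorphism $(\Ad_{V_1})_\KPh \colon \KTh_\KPh(\roeAlg(Z)^\Gamma) \overset{\iso}{\to} \KTh_\KPh(\roeAlg(Z \subset X)^\Gamma)$ implemented by a family of covering isometries $V_t$ for the inclusion $\iota \colon Z \hookrightarrow X$, and unwinding the definitions in \cref{subsec:functoriality} shows that $j_\KPh \circ (\Ad_{V_1})_\KPh$ is nothing other than $\iota_\KPh$, the induced map on the $\KTh$-theory of Roe algebras. Coarse negligibility of the inclusion means that $\iota$ factors, up to coarse equivalence, as $Z \to Z' \to X$ with $Z'$ flasque; functoriality of $\KTh_\KPh(\roeAlg(-)^\Gamma)$ on coarse maps combined with $\KTh_\KPh(\roeAlg(Z')^\Gamma) = 0$ then forces $\iota_\KPh = 0$, hence $j_\KPh = 0$. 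No step poses a genuine obstacle — the machinery of \cref{subsec:longExactSeq,subsec:functoriality} does all the work, and the notion of coarse negligibility is tailored precisely so as to kill the Roe-algebra map that would obstruct injectivity.
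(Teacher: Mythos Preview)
Your proof is correct and follows essentially the same route as the paper: both arguments show that the boundary map of the extension $0 \to \roeAlgLocZ(X)^\Gamma \to \roeAlgLocP{Z}(X)^\Gamma \to \roeAlg(Z \subset X)^\Gamma \to 0$ vanishes by factoring it (via naturality) through the map $\KTh_\KPh(\roeAlg(Z \subset X)^\Gamma) \to \KTh_\KPh(\roeAlg(X)^\Gamma)$, which is zero since $Z$ is coarsely negligible. Your write-up is somewhat more explicit about invoking \cref{lem:idealsOfSubspace} and the functoriality discussion in \cref{subsec:functoriality}, but the strategy is identical.
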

\begin{proof}
  Consider the boundary map $\bd \colon \KTh_{\KPh+1}(\roeAlg(Z \subset X)^\Gamma) \to \KTh_\KPh(\roeAlgLocZ(X)^\Gamma)$ associated to $0 \to \roeAlgLocZ(X)^\Gamma \to \roeAlgLocP{Z}(X)^\Gamma \to \roeAlg(Z \subset X)^\Gamma \to 0$.
  By naturality, $\bd$ factors through $\KTh_\KPh(\roeAlg(Z \subset X)^\Gamma) \to \KTh_\KPh(\roeAlg(X)^\Gamma)$.
  However, since $Z$ is coarsely negligible, the latter map is zero and hence $\bd$ is zero.
  By exactness, we conclude that $\KTh_\KPh(\roeAlgLocZ(X)^\Gamma) \to \KTh_\KPh(\roeAlgLocP{Z}(X)^\Gamma)$ is injective.
\end{proof}

\thmProductCoarselyNegligible

\begin{ex}
  The theorem applies to $Y = \R^q$ and $Z = [0,\infty) \times \R^{q-1}$.
\end{ex}
\begin{sloppypar}
\begin{proof}
  Let $X = \hat{M}$, the covering of $M$ classified by $u$.
  If $g_0 \oplus g_Y$ and $g_1 \oplus g_Y$ were concordant on $M \times Y$ relative to $M \times Z$, then the lifted metrics $\hat{g}_0 \oplus g_Y$, $\hat{g}_1 \oplus g_Y$ would be concordant on $X \times Y$ relative to $X \times Z$.
  Thus, due to \cref{thm:concordanceInvariance}, it suffices to show that $\IndLP{X \times Z}^\Gamma(\clnDiracOp_{X \times Y}^{\hat{g}_0 \oplus g_Y}) \neq \IndLP{X \times Z}^\Gamma(\clnDiracOp_{X \times Y}^{\hat{g}_1 \oplus g_Y})$.
  Indeed, since $Y$ is stably hypereuclidean, \cref{thm:distinguishStabilizeByHypereucl} (applied with $Z = \emptyset$) shows that $\rho^\Gamma(\hat{g}_0 \oplus g_Y) \neq \rho^\Gamma(\hat{g}_1 \oplus g_Y)$.
  Moreover, since $Z$ is coarsely negligible in $Y$, the $\Gamma$-invariant subset $X \times Z$ is coarsely negligible in $X \times Y$.
  By definition, $\rho^\Gamma(\hat{g}_i \oplus g_Y)$ maps to $\IndLP{X \times Z}^\Gamma(\clnDiracOp_{X \times Y}^{\hat{g}_i \oplus g_Y})$, $i=0,1$, under the map $\KTh_\KPh(\roeAlgLocZ(X \times Y)^\Gamma) \to \KTh_\KPh(\roeAlgLocP{X \times Z}(X \times Y)^\Gamma)$, which completes the proof since \cref{thm:coarselyNegligibleImpliesInj} states that this map is injective.
\end{proof}
\end{sloppypar}

\appendix
\section{Explicit descriptions in terms of projections and unitaries} \label{sec:comparison}
In this appendix, we focus on the complex case. 
In contrast to previous sections, we now reserve the notation $\KTh_j(A)$ for $j \in \Z/2\Z$ to denote \enquote{ordinary} complex $\KTh$-theory of an ungraded $\Cstar$-algebra $A$ defined in terms of projections and unitaries.
In particular, in the following we always use $\KThGr_\KPh(A)$ if we mean the $\KTh$-groups as discussed in \cref{sec:gradedKtheory}.

We describe the partial secondary local index classes as elements in $\KTh_0(\roeAlgLocP{Z}(X))$ and $\KTh_1(\roeAlgLocP{Z}(X))$.
This is done in essentially the same way as Xie--Yu~\cite{XY14Positive} define the local index class and the $\rho$-invariant.
We show that this agrees with the elements defined in \cref{subsec:partialIndex} up to a sign and a natural isomorphism $\KThGr_0(A \tensGr \cliffAlgC_n) \iso \KTh_n(A)$ for ungraded $\Cstar$-algebras $A$.

We drop the group action to simplify the notation, but it would not entail any additional technical difficulties to include it.

\subsection{\texorpdfstring{$\KTh$}{K}-theory of trivially graded \texorpdfstring{$\Cstar$}{C*}-algebras} \label{subsec:triviallyGraded}
We need an explicit isomorphism between (the complex version of) the picture of $\KTh$-theory explained in \cref{sec:gradedKtheory} and complex $\KTh$-theory for trivially graded algebras defined in terms of projections and unitaries.

\begin{prop}[{\cite[p.149f]{higson-guentner:GroupCstarAlgebrasAndKTheory}}] \label{prop:complexUngradedKTheory}
  For every trivially graded $\Cstar$-algebra $A$ there are natural isomorphisms $\Theta_{2k} \colon \KThGr_0(A \tensGr \cliffAlgC_{2k}) \to \KTh_0(A)$ and $\Theta_{2k+1} \colon \KThGr_0(A \tensGr \cliffAlgC_{2k+1}) \to \KTh_1(A)$ such that the following diagram of isomorphisms commutes,
  \begin{equation*}
    \begin{tikzcd}[row sep=small]
      \KThGr_0(A \tensGr \cliffAlgC_{2k+1}) \rar{\KThProd b} \dar{\Theta_{2k+1}} & \KThGr_0(\contZ(\R) \tens A \tensGr \cliffAlgC_{2k}) \dar{\Theta_{2k}} \\
      \KTh_1(A) \rar{\delta} & \KTh_0(\contZ(\R) \tens A),
    \end{tikzcd}
  \end{equation*}
  where $\delta$ is the standard suspension isomorphism in $\KTh$-theory and $\KThProd b$ is the Bott isomorphism from \cref{subsec:bott}.
\end{prop}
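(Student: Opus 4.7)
The plan is to construct the isomorphisms $\Theta_n$ explicitly for $n=0,1$ and extend to general $n$ via Bott periodicity, then verify the displayed diagram.

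First I would reduce to the cases $n \in \{0,1\}$. Since $\cliffAlgC_2 \iso \Mat_2(\C)$ carries a standard even grading (cf.\ \cref{subsec:bott}), iterating yields $\cliffAlgC_{2k} \tensGr \cptOps \iso \cptOps$ and $\cliffAlgC_{2k+1} \tensGr \cptOps \iso \cliffAlgC_1 \tensGr \cptOps$ as graded $\Cstar$-algebras. This gives canonical isomorphisms $\KThGr_0(A \tensGr \cliffAlgC_{2k}) \iso \KThGr_0(A)$ and $\KThGr_0(A \tensGr \cliffAlgC_{2k+1}) \iso \KThGr_0(A \tensGr \cliffAlgC_1)$, so $\Theta_{2k}$ and $\Theta_{2k+1}$ reduce to constructing $\Theta_0$ and $\Theta_1$.

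To construct $\Theta_0$ for trivially graded $A$, I would use the dictionary between graded $\ast$-homomorphisms $\phi \colon \contZGr \to A \tens \cptOps$ (with $\cptOps$ standard even graded) and odd self-adjoint regular unbounded multipliers $F$ of $A \tens \cptOps$ via $\phi(f) = f(F)$. In block form with respect to the grading, $F = \bigl(\begin{smallmatrix} 0 & T^* \\ T & 0 \end{smallmatrix}\bigr)$, and a standard argument shows that $T$ is a Kasparov--Fredholm module whose index class in $\KTh_0(A)$ I take as $\Theta_0([\phi])$; concretely, this can be represented as a formal difference of spectral projections of $F$. For $\Theta_1$, the extra tensor factor $\cliffAlgC_1$ effectively promotes $F$ to an even self-adjoint element, and the class of $\exp(2\pi \iu \chi(F))$ for a suitable cut-off $\chi$ defines the corresponding unitary class in $\KTh_1(A)$. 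Naturality in $A$ is manifest since every step of the construction commutes with $\ast$-homomorphisms $A \to A^\prime$.

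The main obstacle will be verifying the commutative diagram. This amounts to showing that the Bott map $\KThProd b$ corresponds under $\Theta_\bullet$ to the classical suspension isomorphism $\delta$. By unwinding the definitions and using naturality (both of $\Theta_\bullet$ and of the external product), it suffices to evaluate $\Theta$ on the Bott element $b$ itself and check that the outcome is the classical Bott generator of $\KTh_0(\contZ(\R))$. Concretely, this means computing $\Theta_0$ on the class of the dual Dirac homomorphism $\beta_1$ defined in \cref{subsec:bott}, which can be checked on the generators $\eu^{-x^2}$ and $x\eu^{-x^2}$ of $\contZGr$. The computation is direct but sign-sensitive; the main work lies in careful bookkeeping of the grading and orientation conventions in $\cliffAlgC_1$ and in the identification $\cliffAlgC_1 \tensGr \cliffAlgC_1 \iso \Mat_2(\C)$ with standard even grading that underlies the passage from graded $\KTh$-theory to ordinary complex $\KTh$-theory.
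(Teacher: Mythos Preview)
Your overall strategy---reduce to $n\in\{0,1\}$ via periodicity, construct $\Theta_0,\Theta_1$ explicitly, then check the diagram on a generator---matches the paper's. The paper's constructions are, however, more concrete than yours: for $\Theta_0$ it forms the Cayley transform $U_\varphi=\varphi^+\bigl(\tfrac{\mathrm{x}-\iu}{\mathrm{x}+\iu}\bigr)$ and sets $\Theta_0([\varphi])=[P_\varphi]-[P_\epsilon]$ with $P_\varphi=\tfrac12(1+\epsilon U_\varphi)$; for $\Theta_1$ it uses the ungraded splitting $\cliffAlgC_1\iso\C\oplus\C$ and sets $\Theta_1([\varphi])=[\proj_1^+(U_\varphi)]\in\KTh_1(A)$. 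Your unbounded-multiplier picture for $\Theta_0$ is equivalent to this, but your sketch of $\Theta_1$ (``promote $F$ to even and take $\exp(2\pi\iu\chi(F))$'') does not explain how the resulting unitary lands in $(A\tens\cptOps)^+$ rather than in something still carrying the $\cliffAlgC_1$ factor.

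There is a genuine type error in your verification paragraph. You propose computing ``$\Theta_0$ on the class of $\beta_1$'' and comparing with ``the classical Bott generator of $\KTh_0(\contZ(\R))$''. But $[\beta_1]\in\KThGr_0(\contZ(\R)\tensGr\cliffAlgC_1^*)$ is not in the domain of $\Theta_0$ (the factor $\cliffAlgC_1^*$ is nontrivially graded), and $\KTh_0(\contZ(\R))=0$, so there is no Bott generator there. What the paper actually does: take $A=\contZ(\R)$, identify $\cliffAlgC_1^*\iso\cliffAlgC_1$ so that $b$ generates $\KThGr_0(\contZ(\R)\tensGr\cliffAlgC_1)\iso\Z$, compute $\Theta_1(b)=\bigl[\tfrac{\mathrm{x}-\iu}{\mathrm{x}+\iu}\bigr]\in\KTh_1(\contZ(\R))$ on one side and $\Theta_0(b\KThProd b)\in\KTh_0(\contZ(\R^2))$ on the other, and check that both give the Bott projection (hence agree under $\delta$). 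Your last paragraph should be replaced by this pair of explicit computations; once $\Theta_0$ and $\Theta_1$ are written down via the Cayley transform as above, both are immediate.
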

\begin{proof}
  Since by (formal) periodicity $\KThGr_0(A \tensGr \cliffAlgC_{2k}) = \KThGr_0(A)$ and $\KThGr_0(A \tensGr \cliffAlgC_{2k+1}) = \KThGr_0(A \tensGr \cliffAlgC_1)$, we can restrict ourselves to the case $k=0$.
  Let $\varphi \colon \contZGr \to A \tensGr \cptOps$ be given and consider the unitary,
\begin{equation*}
 U_\varphi := \varphi^+\left( \frac{\mathrm{x} - \iu}{\mathrm{x} + \iu} \right) \in \left(A \tensGr \cptOps \right)^+,
\end{equation*}
where $(A \tensGr \cptOps)^+$ denotes the unitization of $A \tensGr \cptOps$.
Observe that $U_\varphi$ is equal to the identity modulo $A \tensGr \cptOps$.
Since $A$ is trivially graded, the ungraded tensor product $A \tens \cptOps$ coincides with $A \tensGr \cptOps$ when we neglect the grading.
We will use the symbol $A \tens \cptOps$ if we wish to consider it as an ungraded algebra, and $A \tensGr \cptOps$ if we want to emphasize the grading.
Using this convention, the graded $\Cstar$-algebra $A \tensGr \cptOps$ can be identified with the matrix algebra $\Mat_2(A \tens \cptOps)$, where the grading automorphisms is conjugation with the multiplier 

\begin{equation*}\epsilon = \begin{pmatrix}
 1 & 0 
 \\ 0 & -1
 \end{pmatrix}.
 \end{equation*}
Then we have
\begin{equation}
    \epsilon U_\varphi \epsilon = U_\varphi^*. \label{eq:UphiAndGrading}
\end{equation}
In particular, $\epsilon U_\varphi$ is a self-adjoint unitary in $\Mat_2 \left( (A \tens \cptOps )^+\right)$ eqal to $\epsilon$ modulo $A \tens \cptOps$.
As a consequence, $\mathrm{P}_\varphi := \frac{1}{2} \left(1 + \epsilon U_\varphi \right)$ is a projection in $\Mat_2\left((A \tens \cptOps)^+\right)$ equal to $\mathrm{P}_\epsilon := \frac{1}{2} \left(1 + \epsilon\right)$ modulo $A \tens \cptOps$.
We define $\Theta_0([\varphi]) := [P_\varphi] - [P_\epsilon] \in \KTh_0(A \tens \cptOps) = \KTh_0(A)$.

Similarly, if we have a $\ast$-homomorphism $\varphi \colon \contZGr \to A \tensGr (\cliffAlgC_1 \tensGr \cptOps)$, we again form the unitary $U_\varphi = \varphi^+(\frac{\mathrm{x} - \iu}{\mathrm{x} + \iu}) \in (A \tensGr \cliffAlgC_1 \tensGr \cptOps)^+$.
Using the explicit description of the Clifford algebra $\cliffAlgC_1 \iso \C \oplus \C$, we may consider $U_{\varphi, 1} = \proj_1^+ (U_\varphi)$, a unitary in $(A \tens \cptOps)^+$.
We define $\Theta_1([\varphi]) := [U_{\varphi, 1}] \in \KTh_1(A \tens \cptOps) = \KTh_1(A)$.

The maps $\Theta_i$ defined above are well-defined because if $\varphi$ and $\psi$ are homotopic, then so are $U_\varphi$ and $U_\psi$.
Since the additive structure on $\KThGr_0$ is essentially defined by block sum inside $A \tensGr \cptOps$, it can be verified that the maps $\Theta_n$ are additive.
Moreover, it follows from \cite[Lemma 1.4]{higson-guentner:GroupCstarAlgebrasAndKTheory} that these maps are isomorphisms.

Finally, due to naturality, it suffices to consider the case $A = \contZ(\R)$ to show that the diagram in the proposition commutes.
The group $\KTh_0(\contZ(\R) \tensGr \cliffAlgC_1)$ is generated by the Bott element $b$ which is represented by the $\ast$-homomorphism $\beta \colon \contZGr \to \contZ(\R, \cliffAlgC_1) \iso \contZ(\R) \oplus \contZ(\R)$, $f \mapsto (x \mapsto (f(x), f(-x)))$.
Thus $\Theta_1(b)$ is represented by the unitary $\frac{\mathrm{x} - \iu}{\mathrm{x} + \iu}$ in $\contZ(\R)^+$ which has winding number $+1$ and thus represents the standard generator of $\KTh_1(\contZ(\R))$.
Thus $\delta(\Theta_1(b)) \in \KTh_0(\contZ(\R^2))$ is represented by the Bott projection.
It can be checked that the element $b \KThProd b \in \KThGr_0(\contZ(\R^2))$ is represented by the $\ast$-homomorphism 
\begin{equation*}
\varphi \colon \contZGr \to \contZ(\R^2, \Mat_2(\C)), \quad f \mapsto f\left( \begin{pmatrix} 
0 & \mathrm{x} + \iu \mathrm{y} 
\\ \mathrm{x} - \iu \mathrm{y} & 0                                                                                                                                                                                                                                                                                                                                          \end{pmatrix}\right).
\end{equation*}
A direct computation shows that $P_\varphi$ is also the Bott projection, hence the diagram commutes.
\end{proof}

\subsection{Reduced spinor bundles}
So far, we have worked with the $\cliffAlgC_n$-linear (or \emph{$n$-multi-graded} in the terminology of~\cite{higson-roe:analyticKHomology}) spinor bundle and Dirac operator.
Here we review the equivalent viewpoint using irreducible Clifford modules.
If $n$ is even, there is up to isomorphism only one irreducible Clifford module which we denote by $\spinorBdl(n)$.
It automatically carries a grading $\spinorBdl(n) = \spinorBdl^{(0)}(n) \oplus \spinorBdl^{(1)}(n)$.
If $n$ is odd, there are two irreducible Clifford modules $\spinorBdl_+(n)$ and $\spinorBdl_-(n)$ which are ungraded.
Let $X$ be a Riemannian spin manifold, then the reduced spinor bundle is the associated bundle $\spinorBdl(X) = \principalBdl_{\Spin}(X) \times_{\Spin(n)} \spinorBdl(n)$.
In the odd-dimensional case the representations of $\Spin(n)$ coming from the two irreducible Clifford modules are isomorphic, so it does not matter which we choose.
We may realize $\spinorBdl(n)$ concretely as a left ideal inside $\cliffAlgC_n$ so that $\spinorBdl(X)$ is a sub-bundle of $\clnSpinorBdl(X)$ which is $\cliffAlgC(\tangentBdl^* X)$-invariant.
In particular, the $\cliffAlgC_n$-linear Dirac operator $\clnDiracOp$ restricts to the spinor Dirac operator $\diracOp$ on $\spinorBdl(X)$.
Let $\clnXmodule := \Lp^2(X, \clnSpinorBdl(X))$ and $\Xmodule := \Lp^2(X, \spinorBdl(X))$.
The first is always a graded Hilbert $\cliffAlgC_n$-module whereas the latter is a Hilbert space, furnished with a grading iff $n$ is even.

Suppose $n = 2k > 0$. Then $\End_\C(\spinorBdl(n)) = \cliffAlgC_n$ and there is a one to one correspondence between (possibly unbounded) $\cliffAlgC_n$-linear operators on $\clnXmodule$ and $\C$-linear operators on $\Xmodule$.
Indeed, every $\cliffAlgC_n$-linear operator on $\clnXmodule$ keeps $\Xmodule$ invariant and is uniquely determined by its restiction to $\Xmodule$.
On the level of Roe algebras this yields a canonical isomorphism (of graded $\Cstar$-algebras),
\begin{equation}
  \roeAlg(X, \mathfrak{H}; \cliffAlgC_{2k}) = \roeAlg(X, \Xmodule). \label{eq:reducedSpinorRoeAlgebraEven}
\end{equation}

In the odd-dimensional case $n = 2k+1$, we have $\cliffAlgC_n = \End_\C(\spinorBdl_+(n)) \oplus \End_\C(\spinorBdl_-(n))$.
A similar argument as above yields,
\begin{equation}
  \roeAlg(X, \clnXmodule; \cliffAlgC_{2k+1}) = \roeAlg(X, \Xmodule) \oplus \roeAlg(X, \Xmodule). \label{eq:reducedSpinorRoeAlgebraOdd}
\end{equation}

The identifications \labelcref{eq:reducedSpinorRoeAlgebraEven,eq:reducedSpinorRoeAlgebraOdd} hold analogously for the structure algebra $\structureAlg$ and the all localization algebras $\structureAlgLoc$ and $\roeAlgLocP{Z}$.

\subsection{Local index classes in terms of projections and unitaries}

A \emph{normalizing function} is a continuous odd non-decreasing function $\chi \colon \R \to [-1,1]$ such that $\lim_{x \to \pm \infty} \chi(x) = \pm 1$.
Let $L_{\diracOp} \colon  [1, \infty) \to \structureAlg(X, \Xmodule)$, $L_{\diracOp}(t) = \chi\left(\frac{1}{t}\diracOp \right)$, where $\chi$ is a normalizing function.
Let $Z \subseteq X$ be a closed subset (possibly $Z = \emptyset$ or $Z = X$) such that the scalar curvature function on $X$ is uniformly positive \emph{outside of $Z$}.
Then by \cref{lem:roesLemmaLocalizedIndex}, we have $L_{\diracOp}^2 - 1 \in \roeAlgLocP{Z}(X, \Xmodule)$ provided that we have chosen $\chi$ such that $\chi^2 = 1$ outside $(-\varepsilon, \varepsilon)$. 

 If $n > 0$ is even, then $\Xmodule$ is graded and $L_{\diracOp}(t)$ is an odd operator for all $t$, that is, with respect to the grading $\Xmodule = \Xmodule^{(0)} \oplus \Xmodule^{(1)}$, we have
\begin{equation*}
 L_\diracOp(t) = \begin{pmatrix}
            0 & L_{\diracOp}^-(t) \\ L_{\diracOp}^+(t) & 0
          \end{pmatrix}.
\end{equation*}
Let $v \colon \Xmodule^{(0)} \to \Xmodule^{(1)}$ be a unitary which intertwines the $\contZ(X)$-representations (for instance, take $v$ to be Clifford multiplication with a measurable unit co-vector field).
Then $v^* L_{\diracOp}^+ \in \structureAlgLoc(X, \Xmodule^{(0)})$ is a unitary modulo $\roeAlgLocP{Z}(X, \Xmodule^{(0)})$ and hence defines a class $[v^* L_{\diracOp}^+] \in \KTh_1(\structureAlgLoc(X)/\roeAlgLocP{Z}(X))$.
We let
\begin{equation*}
  \IndLTildeP{Z}(\diracOp) := \bd [v^* L_{\diracOp}^+] \in \KTh_0(\roeAlgLocP{Z}(X)).
\end{equation*}

If $n$ is odd, then $\frac{1}{2}(1 + L_\diracOp)$ is a projection modulo $\roeAlgLocP{Z}(X, \Xmodule)$ and we set
\begin{equation*}
  \IndLTildeP{Z}(\diracOp) := \bd \left[  \frac{1}{2}(1 + L_\diracOp) \right] \in \KTh_1(\roeAlgLocP{Z}(X)).
\end{equation*}

\begin{rem}
  For $Z = X$ (respectively $Z = \emptyset$), the element $\IndLTildeP{Z}(\diracOp)$ agrees with the local index class (respectively the $\rho$-invariant) defined in \cite{XY14Positive}.
  This can be proved using that the map $\structureAlgLoc(X) / \roeAlgLoc(X) \to \structureAlg(X) / \roeAlg(X)$ (respectively $\structureAlgLoc(X) / \roeAlgLocZ(X) \to \structureAlg(X)$) defined by evaluation at $1 \in [1, \infty)$ is a $\KTh$-theory isomorphism, see~\cite{roe-qiao:onTheLocalizationAlgebraOfYu}.
\end{rem}

\begin{thm}
  The isomorphism $\Theta_n$ from \cref{prop:complexUngradedKTheory},
  \begin{equation*}
    \Theta_n \colon \KThGr_0(\roeAlgLocP{Z}(X; \cliffAlgC_n)) = \KThGr_0(\roeAlgLocP{Z}(X) \tensGr \cliffAlgC_n) \to \KTh_n(\roeAlgLocP{Z}(X)),
  \end{equation*}
  takes $\IndLP{Z}(\clnDiracOp)$ as defined in \cref{subsec:partialIndex} to $-\IndLTildeP{Z}(\diracOp)$.
\end{thm}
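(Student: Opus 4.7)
The plan is to trace both sides through the explicit isomorphism $\Theta_n$ of \cref{prop:complexUngradedKTheory} and compare via direct spectral-calculus computations on the reduced spinor module. First, via the identifications \labelcref{eq:reducedSpinorRoeAlgebraEven,eq:reducedSpinorRoeAlgebraOdd}, the $\cliffAlgC_n$-linear Roe and localization algebras on $\clnXmodule$ become the corresponding (graded) algebras on the reduced spinor module $\Xmodule$, and $\clnDiracOp$ acts as $\diracOp$ there. Fix a homotopy inverse $\psi\colon \contZGr\to\contZGr(-\varepsilon,\varepsilon)$; since $\psi$ is homotopic to the identity as a $*$-homomorphism into $\contZGr$, the unitary $U_{\varphi_{\clnDiracOp}\circ\psi}(t)\in(\roeAlgLocP{Z}(X;\cliffAlgC_n))^+$ can be deformed (through unitaries with image fixed in the quotient by $\roeAlgLocP{Z}$) to the genuine Cayley transform $c(\diracOp/t)=(\diracOp/t-i)(\diracOp/t+i)^{-1}$ without changing its $K$-class.

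In the even case $n=2k$, by definition $\Theta_{2k}(\IndLP{Z}(\clnDiracOp))=[P]-[P_\epsilon]$ with $P=\tfrac{1}{2}(1+\epsilon\,c(\diracOp/t))$. The identity $\epsilon c(\diracOp) \epsilon=c(\diracOp)^{*}$, which follows because $c$ has real even part and imaginary odd part and $\diracOp$ is odd, yields $(\epsilon c(\diracOp/t))^{2}=1$ exactly in the multiplier algebra, so $P$ is a genuine projection. Writing $\diracOp=\begin{pmatrix} 0 & \diracOp_- \\ \diracOp_+ & 0\end{pmatrix}$ with $\diracOp_-=\diracOp_+^{*}$ and using the shift identity $f(\diracOp_-\diracOp_+)\diracOp_-=\diracOp_- f(\diracOp_+\diracOp_-)$, one computes
\[
P=\begin{pmatrix} \diracOp_-\diracOp_+/(\diracOp_-\diracOp_++1) & -i\diracOp_-/(\diracOp_-\diracOp_++1) \\ i\diracOp_+/(\diracOp_+\diracOp_-+1) & 1/(\diracOp_+\diracOp_-+1)\end{pmatrix},
\]
so that $P$ vanishes on $\ker\diracOp_+\subset\Xmodule^{(0)}$ and restricts to the identity on $\ker\diracOp_-\subset\Xmodule^{(1)}$, i.e.\ exactly opposite to $P_\epsilon$. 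Hence $[P]-[P_\epsilon]=[P_{\ker\diracOp_-}]-[P_{\ker\diracOp_+}]=-\mathrm{ind}(\diracOp_+)$, and by the standard interpretation of the boundary map in \cref{subsec:triviallyGraded}, $\bd[v^*L_\diracOp^+]=\mathrm{ind}(\diracOp_+)$, so $\Theta_{2k}(\IndLP{Z}(\clnDiracOp))=-\IndLTildeP{Z}(\diracOp)$.

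The odd case is parallel: splitting $\cliffAlgC_1=\C\oplus\C$ and projecting $U_{\varphi_\diracOp\circ\psi}$ onto its first factor yields a unitary modulo $\roeAlgLocP{Z}(X)$ built from $c(\diracOp/t)$; the analogous $\KTh_1$-calculation, using the formula $\bd[\tfrac{1}{2}(1+L_\diracOp)]$ for the boundary map, again produces precisely the negative of $\IndLTildeP{Z}(\diracOp)$ for the same orientation reason. In the non-(co)compact setting the block-matrix calculation of the previous paragraph has to be reinterpreted: differences of finite-rank projections become formal differences of $K$-theory classes in $\roeAlgLocP{Z}(X)$, but the identity $(\epsilon c(\diracOp/t))^2=1$ is exact and the identification of $P$ with an ``index projection'' holds verbatim on every spectral piece of $\diracOp$ where $|\diracOp|\gtrsim\varepsilon$, so the sign comparison survives.

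The main obstacle is the block-matrix computation showing that $P$ projects \emph{onto} $\ker\diracOp_-$ rather than onto $\ker\diracOp_+$: this is what produces the $-1$, and conceptually it reflects the fact that the Cayley transform $c(x)=(x-i)/(x+i)$ tends to $+1$ at \emph{both} spectral ends, while a normalizing function $\chi$ tends to opposite signs. All other steps — compatibility with the ideal $\roeAlgLocP{Z}(X)$, matching of boundary-map conventions, reduction to $\Xmodule$ — are functorial, so the theorem is pinned down by this single spectral-calculus observation together with the explicit shape of $\Theta_n$ recalled in \cref{prop:complexUngradedKTheory}.
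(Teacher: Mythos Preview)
Your argument has a genuine gap in the even case. The explicit block-matrix formula for $P=\tfrac12(1+\epsilon\,c(\diracOp/t))$ is correct, and so is the observation about its behaviour on $\ker\diracOp_+$ and $\ker\diracOp_-$. But the step
\[
[P]-[P_\epsilon]=[P_{\ker\diracOp_-}]-[P_{\ker\diracOp_+}]=-\operatorname{ind}(\diracOp_+)
\]
is a Fredholm computation: it only makes sense when $\diracOp$ has finite-dimensional kernel and cokernel and the target $\KTh_0$-group is $\Z$. Here the class $[P]-[P_\epsilon]$ lives in $\KTh_0(\roeAlgLocP{Z}(X))$, which is typically not $\Z$, and likewise $\bd[v^*L_\diracOp^+]$ is not a numerical index. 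Your final paragraph acknowledges this but the proposed fix (\enquote{holds verbatim on every spectral piece where $|\diracOp|\gtrsim\varepsilon$}) does not establish an equality of $\KTh$-theory classes; one cannot decompose $K$-classes spectrally in this way. The odd case is even less complete: you assert the analogous computation works but do not carry it out.

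The paper avoids this by never invoking a Fredholm interpretation. It constructs an explicit unitary lift $W=VT$ of $v^*L_\diracOp^+$ in $\structureAlgLoc(X,\Xmodule)$, uses the standard boundary-map formula $\bd[v^*L_\diracOp^+]=[WP_\epsilon W^*]-[P_\epsilon]$, and then proves the algebraic identity $\tfrac12(1+\epsilon U)=1-\epsilon T P_\epsilon T^*\epsilon$ (from $\epsilon T=-T^*\epsilon$ and $U=T^2$), which after conjugation by $V\epsilon$ yields exactly $[P_\epsilon]-[WP_\epsilon W^*]$. This is a pure $\Cstar$-algebraic manipulation valid in any unital algebra, so it applies in $\roeAlgLocP{Z}(X)$ without any Fredholm hypothesis. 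In the odd case the paper similarly reduces to a winding-number comparison of two explicit unitaries in $\contZ((-\varepsilon,\varepsilon))^+$. To repair your argument you would need to replace the kernel computation by a direct comparison of the projection $P$ with the projection $WP_\epsilon W^*$ arising from the boundary map, which is essentially what the paper does.
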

\begin{proof}
  Let $\varepsilon > 0$ such that $f(\clnDiracOp)$ and $f(\diracOp)$ lie in $\roeAlg(Z \subset X)$ for $f \in \contZ((-\varepsilon, \varepsilon))$ and choose a homotopy inverse $\psi \colon \contZGr \to \contZGr(-\varepsilon, \varepsilon)$ to the inclusion $\contZGr(-\varepsilon, \varepsilon) \hookrightarrow \contZGr$.
  Then $\IndLP{Z}(\clnDiracOp) = [\varphi_\clnDiracOp \circ \psi]$.
  We can choose $\psi$ in such a way that it extends to a homotopy equivalence $\bar{\psi} \colon \cont([-\infty, \infty]) \to \cont([-\varepsilon, \varepsilon])$, where we identify $\cont([-\varepsilon,\varepsilon])$ with the subspace of $\cont(\Rext)$ consisting of those functions which are constant on $[-\infty, -\varepsilon]$ as well as on $[\varepsilon, \infty]$.
  Let $\tilde{\chi}$ be the normalizing function $\tilde{\chi}(x) := \frac{x}{\sqrt{1 + x^2}}$.
  We can assume that $\chi := \bar{\psi}(\tilde{\chi})$ is still a normalizing function which by construction satisfies $\chi^2 = 1$ outside $(-\varepsilon, \varepsilon)$.
  Let $\tilde{\tau}(x) := \frac{x - \iu}{\sqrt{1 + x^2}}$, then $\tilde{\tau}(x) = \tilde{\chi}(x) - \frac{\iu}{\sqrt{1 + x^2}}$, so $\tilde{\chi} \equiv \tilde{\tau}$ modulo $\contZ(\R)$.
  Set $\tau := \bar{\psi}(\tilde{\tau})$, then $\chi \equiv \tau$ modulo $\contZ((-\varepsilon, \varepsilon))$.  
  The unitary $U := U_{\varphi_\clnDiracOp \circ \psi}$ from \cref{prop:complexUngradedKTheory}\footnote{Note that we have enough room to carry out the construction of $\Theta_n$ from the proof of \cref{prop:complexUngradedKTheory} inside $\roeAlgLocP{Z}(X, \clnXmodule; \cliffAlgC_n)$ (without taking the tensor product with the compact operators) and we shall do so here.} is given by $\varphi_\clnDiracOp^+(\tau^2)$ since $\tilde{\tau}^2 = \frac{\mathrm{x}-\iu}{\mathrm{x}+\iu}$.
  In particular, $U = T^2$, where $T(t) := \tau(\frac{1}{t} \clnDiracOp)$ is a unitary in $\structureAlgLoc(X, \clnXmodule; \cliffAlg_n)$.
  Furthermore, $T(t) - \chi(\frac{1}{t} \clnDiracOp) = (\tau - \chi)(\frac{1}{t} \clnDiracOp)$, so $T - L_\clnDiracOp \in \roeAlgLocP{Z}(X, \clnXmodule; \cliffAlgC_n)$, where $L_\clnDiracOp(t) = \chi(\frac{1}{t} \clnDiracOp)$.
  
  Suppose that $n > 0$ is even.
  By \labelcref{eq:reducedSpinorRoeAlgebraEven}, we have $\roeAlgLocP{Z}(X, \clnXmodule; \cliffAlgC_n) = \roeAlgLocP{Z}(X, \Xmodule)$ and $L_\clnDiracOp = L_\diracOp \in \structureAlgLoc(X, \clnXmodule; \cliffAlg_n) = \structureAlgLoc(X, \Xmodule)$.
  Let $V := \left( \begin{smallmatrix} 0 & v^* \\ v & 0 \end{smallmatrix} \right)$ and $W := V T \in \structureAlgLoc(X, \Xmodule)$.
  Then $W$ is a unitary, which is equal to $V L_{\diracOp} = \left( \begin{smallmatrix} v^* L_\diracOp^+ & 0 \\ 0 & v L_{\diracOp}^- \end{smallmatrix} \right)$ modulo $\roeAlgLocP{Z}(X, \Xmodule)$.
  By the definition of the boundary map in $\KTh$-theory (\cite[Chapter 8.1]{wegge-olsen:KTheory}), we have
\begin{equation}
    \IndLTildeP{Z}(\diracOp) = \bd [v^* L_{\diracOp}^+] = \left[ W \begin{pmatrix} 1 & 0 \\ 0 & 0 \end{pmatrix} W^* \right] - \left[ \begin{pmatrix} 1 & 0 \\ 0 & 0 \end{pmatrix} \right] \in \KTh_0( \roeAlgLocP{Z}(X)). \label{eq:boundaryFormula}
\end{equation}
Let $\epsilon = \left( \begin{smallmatrix} 1 & 0 \\ 0 & -1 \end{smallmatrix} \right)$ be the grading operator on $\Xmodule = \Xmodule^{(0)} \oplus \Xmodule^{(1)}$ and $P_\epsilon = \frac{1}{2}(1 + \epsilon) = \left( \begin{smallmatrix} 1 & 0 \\ 0 & 0 \end{smallmatrix} \right)$.
Since the complex conjugate of $\tilde{\tau}(x)$ is $- \tilde{\tau}(-x)$, we have $\epsilon T = - T^* \epsilon$.
A direct computation using this fact together with $U^2 = T$ shows that $\frac{1}{2}(1 + \epsilon U) = 1 - \epsilon T P_\epsilon T^* \epsilon$.
Then we have
\begin{align*}
 \Theta_n([\varphi_\clnDiracOp \circ \psi]) &=  \left[ \frac{1}{2} \left(1 + \epsilon U \right) \right] - [P_\epsilon] \\ 
 &= \left[1 - \epsilon T P_\epsilon T^* \epsilon \right] - [P_\epsilon] \\
 &= [P_\epsilon] - [\epsilon T P_\epsilon T^* \epsilon] \\
 &= [P_\epsilon] - [W P_\epsilon W^*] \qquad \text{(by conjugation with $V \epsilon$)}  \\
 &= - \bd [v^* L_{\diracOp}^+] = - \IndLTildeP{Z}(\diracOp) \qquad \text{(by \labelcref{eq:boundaryFormula}).}
\end{align*}

If $n$ is odd, then by \labelcref{eq:reducedSpinorRoeAlgebraOdd}, we have $\roeAlgLocP{Z}(X, \clnXmodule; \cliffAlgC_n) = \roeAlgLocP{Z}(X, \Xmodule) \oplus \roeAlgLocP{Z}(X, \Xmodule)$ and $\proj_1(L_\clnDiracOp) = L_\diracOp$.
Then
\begin{equation*}
  \IndLTildeP{Z}(\diracOp) = \bd \left[ \frac{1}{2}\left(1 + L_\diracOp \right) \right] = \left[ \eu^{-2 \pi \iu \frac{1}{2} \left(1 + \proj_1(L_\clnDiracOp) \right)} \right] = \left[ \proj_1^+ \varphi_{\clnDiracOp}^+ \left( \eu^{-\pi \iu (1 + \chi)} \right) \right].
\end{equation*}
Here we have used the explicit description of the boundary map in terms of the exponential function (see~\cite[Exercise 9.E]{wegge-olsen:KTheory}).
The unitary $\eu^{-\pi \iu (1 + \chi)} = - \eu^{-\pi \iu \bar{\psi}(\tilde{\chi})}$ in $\contZ((-\varepsilon, \varepsilon))^+ \iso \cont(S^1)$ has winding number $-1$, whereas $\bar{\psi}\left(\frac{\mathrm{x} - \iu}{\mathrm{x} + \iu} \right)$ has winding number $+1$.
Consequently,
\begin{equation*}
  - \IndLTildeP{Z}(\diracOp) = \left[ \proj_1^+ \varphi_{\clnDiracOp}^+ \left( \bar{\psi}\left( \frac{\mathrm{x} - \iu}{\mathrm{x} + \iu} \right) \right) \right] = \left[\proj_1^+(U) \right] = \Theta_n \left( [\varphi_\clnDiracOp \circ \psi] \right). \qedhere
  \end{equation*}
\end{proof}

\printbibliography
\listoffixmes

\end{document}